\newtheorem{thm}{Theorem}[section]
\newtheorem{cor}[thm]{Corollary}
\newtheorem{lem}[thm]{Lemma}
\newtheorem{prop}[thm]{Proposition}
\newtheorem{rem}[thm]{Remark}
\theoremstyle{definition}
\newcommand{\Finp}{{\mathbb{F}_p}}
\newcommand{\Zet}{{\mathbb{Z}}}
\newcommand{\Nat}{{\mathbb{N}}}
\newcommand{\Per}{{\mathbb{S}}}
\numberwithin{equation}{section}
\begin{document}
\begin{frontmatter}
\baselineskip=17pt

\title{An inverse theorem in $\Finp$ and rainbow free colorings.}
\author{Mario  Huicochea\\
CINNMA\\ 
San Isidro 303 Juriquilla Fracc. Villas del Mes\'{o}n\\
 76226 Qro., M\'{e}xico\\
E-mail: dym@cimat.mx
}

\date{}







\begin{abstract}
Let $\Finp$ be the field with $p$ elements with $p$ prime, $X_1,\ldots, X_n$ pairwise disjoint subsets of $\Finp$ with at least $3$ elements  such that $\sum_{i=1}^n|X_i|\leq p-5$, and $\Per_n$ the set of permutations of $\{1,2,\ldots, n\}$. If $a_1,\ldots,a_n\in\Finp^*$ are not all equal, we characterize the subsets $X_1,\ldots, X_n$ which satisfy
\begin{equation*}
\Bigg|\bigcup_{\sigma\in\Per_n}\sum_{i=1}^na_{\sigma(i)}X_i\Bigg|\leq \sum_{i=1}^n|X_i|.
\end{equation*}
This result has the following application: For $n\geq 2$, $b\in\Finp$ and $a_1,\ldots, a_n$  as above,  we characterize the colorings  $\bigcup_{i=1}^nC_i=\Finp$ where each color has at least 3 elements such that  $\sum_{i=1}^na_ix_i=b$ has not rainbow solutions.
\end{abstract}

\begin{keyword}
inverse theorems; rainbow free colorings
\end{keyword}
\end{frontmatter}
\section{Introduction}

In this article $p$ is a prime number, $\Finp$ the field with $p$ elements, $\Finp^*:=\Finp\setminus\{0\}$, $\Per_n$ is the set of permutations of $\{1,\ldots, n\}$, and  $[s]$ the greatest integer less than or equal to $s\in \mathbb{R}$.  Identifying $\Finp$ with $\Zet/p\Zet$, if $x\in\Zet$, then $\overline{x}$ is its image under the canonical projection $\Zet\rightarrow \Zet/p\Zet$. For $x,y\in\Finp$, define $[x,y]:=\big\{x,x+\overline{1},\ldots,x+\overline{i}\big\}$ where $i$ is the element of $\{0,1,\ldots, p-1\}\subseteq \Zet$  such that $\overline{i}=y-x$. For $r\in\Finp$ and $X\subseteq \Finp$, write $rX:=\{rx:\;x\in X\}$. Readily $X\subseteq \Finp$ is an arithmetic progression with common difference $r\in\Finp$ if and only if there are $x,y\in\Finp$ such that $X=r[x,y]$. An important and trivial fact  that will be used several times is the following
\begin{equation*}
r[x,y]=(-r)[-y,-x]\qquad \forall\;r,x,y\in\Finp.
\end{equation*}

Given $X$ and $Y$ subsets of $\Finp$, it is natural to ask whether $X$ and $Y$ have a particular structure when their sumset $X+Y$ is \emph{small}; the answers to this question are known as inverse theorems. Vosper [11] found the first non-trivial inverse theorem; also Hamidoune and R\o dseth [7] obtained an important inverse theorem with really few conditions on $|X|$ and $|Y|$, see Section 2 for the precise statement. Also for special subsets $X$ and $Y$ of $\Finp$ there exist interesting inverse theorems; for instance  Freiman [5] improved Vosper Theorem if $X=Y$, and Serra and Z\'{e}mor [10] generalized also Vosper Theorem. It is natural to ask whether we can generalize these results for arbitrarily many subsets $X_1,\ldots, X_n$ of $\Finp$;  Conlon [2] provided a generalization of Vosper and Hamidoune-R\o dseth Theorems for $n\geq 3$ when $\min_{1\leq i\leq n}|X_i|\geq n+1$, $\big|\sum_{i=1}^nX_i\big|\leq p-1$ and $p\geq 3n^2-4n-3$. The main result of this paper is the following inverse theorem.
\begin{thm}
\label{R1}
Let $n\geq 2$ and $X_1,\ldots, X_n$ be pairwise disjoint subsets of $\Finp$ such that $\min_{1\leq i\leq n}|X_i|\geq 3$ and $\sum_{i=1}^n|X_i|\leq p-5$. If $a_1,\ldots, a_n\in\Finp^*$  are not all equal, one of the following statements holds true:
\begin{enumerate}
\item[(i)] $n=2, a_1=-a_2$ and  $\{X_1,X_2\}=\{r[x,y],r([y+c,x-c]\setminus \{z\})\}$ for some $x,y,c,r,z\in\Finp$.

\item[(ii)] $|\bigcup_{\sigma\in \Per_n}\sum_{i=1}^na_{\sigma(i)}X_i|>\sum_{i=1}^n|X_i|$.
\end{enumerate}
\end{thm}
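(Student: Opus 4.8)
The plan is to assume that alternative (ii) fails, i.e.
\begin{equation*}
\Big|\bigcup_{\sigma\in\Per_n}\sum_{i=1}^n a_{\sigma(i)}X_i\Big|\le \sum_{i=1}^n|X_i| ,
\end{equation*}
and to deduce the rigid description (i). Write $A_\sigma:=\sum_{i=1}^n a_{\sigma(i)}X_i$ and $S:=\bigcup_{\sigma\in\Per_n}A_\sigma$, so that $A_\sigma\subseteq S$ for every $\sigma$ and $|S|\le\sum_i|X_i|$. Since scaling by a nonzero constant preserves cardinalities, the iterated Cauchy--Davenport inequality gives $\sum_i|X_i|-(n-1)\le|A_\sigma|\le|S|\le\sum_i|X_i|\le p-5$ for every $\sigma$; in particular every sumset in sight stays below the threshold $p-4$ demanded by the inverse theorems of Vosper and Hamidoune--R\o dseth, and the hypotheses $|X_i|\ge 3$ together with $\sum_i|X_i|\le p-5$ force $p\ge 11$. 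The whole problem is to convert the smallness of the \emph{union} into structural information on the $X_i$ and then into a relation among the $a_i$.

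First I would isolate two coordinates. Using the elementary identity $(U\cup V)+W=(U+W)\cup(V+W)$, the identity permutation together with the transposition of positions $i\ne j$ yields
\begin{equation*}
S\ \supseteq\ \big((a_iX_i+a_jX_j)\cup(a_jX_i+a_iX_j)\big)+\sum_{k\ne i,j}a_kX_k .
\end{equation*}
Bounding the last term from below by Cauchy--Davenport controls $|a_iX_i+a_jX_j|$ by $|X_i|+|X_j|$ (with no slack when $n=2$). Feeding the pairwise sumsets into Hamidoune--R\o dseth then shows that each $X_i$ is an arithmetic progression up to at most one missing point, with common difference $c_i$ well defined up to sign (as $p\ge 11$), and that the common differences are tied together: $a_ic_i=\pm a_jc_j$ for the relevant pairs. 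Because \emph{every} $A_\sigma$ is small, the summands of each $A_\sigma$ must themselves share a common difference, which upgrades this to $a_{\sigma(i)}c_i=\pm a_{\sigma(j)}c_j$ for all $i,j$ and all $\sigma$; these sign relations drive the rest of the argument.

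The main obstacle is excluding $n\ge 3$, where Cauchy--Davenport leaves a slack of $n-2$ in the pairwise bound and so the one-step-beyond-Vosper regime of Hamidoune--R\o dseth is not immediately available. I would argue by induction on $n$. Deleting the set $X_n$ and its coefficient, the extensions $\sigma(n)=n$ show that $S'+a_nX_n\subseteq S$, where $S'$ is the permutation-union of the subfamily $X_1,\dots,X_{n-1}$; hence $|S|\ge|S'|+|X_n|-1$ by Cauchy--Davenport. The inductive dichotomy applied to the subfamily (whose coefficients can be kept not all equal by a suitable choice of the deleted index) either places it in case (ii), forcing all these estimates to be equalities and reducing to a Vosper-type analysis that gives a contradiction, or, only when $n=3$, places it in the exceptional case (i); in the latter situation I would use the explicit two-progressions structure of the subfamily together with the extra set and the unequal coefficients to spread $S$ beyond $\sum_i|X_i|$. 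Handling the degenerate coefficient patterns (in particular $n-1$ equal coefficients) by hand, this stage concludes that a failure of (ii) forces $n=2$.

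It then remains to pin down the $n=2$ case. The relations $a_1c_1=\pm a_2c_2$ and $a_2c_1=\pm a_1c_2$ coming from $a_1X_1+a_2X_2$ and $a_2X_1+a_1X_2$ give $a_1^2=\pm a_2^2$; the branch $a_1^2=-a_2^2$ is eliminated by checking that it makes $S$ too large, so $a_1=-a_2$ (recall $a_1\ne a_2$). Putting $a:=a_1=-a_2$, the union becomes $S=a\big((X_1-X_2)\cup(X_2-X_1)\big)$, a set invariant under $t\mapsto -t$. Since $X_1,X_2$ are arithmetic progressions with common difference $r$ up to one hole, $X_1-X_2$ is an arithmetic progression of length $|X_1|+|X_2|-1$, and its symmetrisation can gain at most one point without violating $|S|\le|X_1|+|X_2|$; forcing $X_1-X_2$ to be almost invariant under negation via the identity $r[x,y]=(-r)[-y,-x]$ fixes the relative placement of the two progressions and produces $X_1=r[x,y]$ and $X_2=r([y+c,x-c]\setminus\{z\})$, the single point of asymmetry locating the hole $z$. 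I expect the bookkeeping for the exact values of $c$ and $z$, together with the exclusion of the branch $a_1^2=-a_2^2$, to be the most delicate part of this final step.
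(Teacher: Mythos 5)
Your outline follows the paper's broad strategy (induction on $n$ driven by the Cauchy--Davenport, Vosper and Hamidoune--R\o dseth theorems), but it has two genuine gaps, located exactly where the paper does its hardest work. First, your inductive mechanism --- delete one index and use $S'+a_nX_n\subseteq S$ --- only yields $|S|\geq |S'|+|X_n|-1\geq \sum_{i=1}^n|X_i|$, a \emph{non-strict} bound, so the entire burden falls on the unexecuted claim that the equality case ``reduces to a Vosper-type analysis that gives a contradiction.'' That equality analysis is not routine: it is the content of all of Section 5. The paper largely avoids it by a different decomposition: it groups the coefficients into blocks of equal values, swaps one coefficient across blocks so that \emph{both} sub-collections have not-all-equal coefficients, and writes $S\supseteq S_1+S_2$ where induction gives $|S_i|\geq (\sum+1)$ on each side, so Cauchy--Davenport eats only one and strictness survives. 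The coefficient patterns for which no such two-block split exists --- $(a,\ldots,a,b)$ with $b\notin\{0,\pm a\}$ (Lemma \ref{R18}, which needs the counting argument of Lemma \ref{R17} and the dilate Lemma \ref{R25}), the pattern with a single $-1$ (Lemma \ref{R19}), the $\pm 1$ patterns for $n=4,5$ (Lemma \ref{R20}), and $a_1=a_2=-a_3$ with $a_4\notin\{\pm a_1\}$ (Lemma \ref{R21}) --- each require separate, nontrivial arguments, and your proposal supplies none of them; naming them ``degenerate coefficient patterns \ldots handled by hand'' does not discharge them.

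Second, your $n=2$ endgame presumes every $X_i$ is a progression up to one hole with a common difference well defined up to sign, which rests on Hamidoune--R\o dseth and hence on $|X_1|+|X_2|\geq 7$. In the case $|X_1|=|X_2|=3$ the available inverse statement (Proposition \ref{R6}) has the extra alternative $X_1=X_2+z$, with no progression structure at all, so your sign relations $a_1c_1=\pm a_2c_2$, $a_2c_1=\pm a_1c_2$ simply do not exist there. This is precisely where the paper's Lemma \ref{R9} works hardest: the translate case leads to $X_2=\lambda^2X_2+\mu$ with $\lambda=a_2a_1^{-1}$, then to $\lambda^4+\lambda^2+1=0$, and is excluded only via the explicit computations of Lemma \ref{R8} and Lemma \ref{R22}. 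Your plan for the generic $n=2$ case --- deriving $a_1^2=\pm a_2^2$ and eliminating the branch $a_1^2=-a_2^2$ through the two interval representations of $S$ --- is essentially the paper's Lemma \ref{R11} and is sound in outline, and your symmetrisation argument for $a_1=-a_2$ matches Lemma \ref{R10}; but without the $(3,3)$ small case and the Section 5 equality analyses the proposal cannot be completed as written.
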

 
If $C_1,\ldots, C_n$ are pairwise disjoint subset of $\Finp$ such that $\bigcup_{i=1}^nC_i=\Finp$, we say $\mathcal{C}=\{C_i\}_{i=1}^n$ is a n-coloring of $\Finp$. Given a n-coloring $\mathcal{C}$ and an equation $\sum_{i=1}^na_ix_i=b$ with $a_1,\ldots,a_n\in\Finp^*$ and $b\in\Finp$, we say that  $\mathcal{C}$ is rainbow free with respect to this equation if $\sum_{i=1}^na_{\sigma(i)}v_{i}\neq b$ for all $\sigma\in\Per_n$  and $v_i\in C_i$. For $\lambda,\mu,x\in\Finp$, write
\begin{equation*}
S_{(\lambda,\mu)}(x):=\Bigg\{\lambda^k x+\Bigg(\sum_{i=0}^{k-1}\lambda^i\Bigg)\mu:\;k\in\Nat\Bigg\}.
\end{equation*}
 
 Jungi\'{c} \emph{et al.} [8] showed that the inverse theorems are  powerful tools to study the rainbow colorings. In the case where $n=3$, explicit characterizations of the equations  that have rainbow free  colorings are provided for example in [8], [9] and [6]. For arbitrary $n$ Conlon [2] showed that under the assumptions $\min_{1\leq i\leq n}|C_i|\geq n$ and $p\geq 3n^2-4n-3$, $\mathcal{C}$ is rainbow free with respect to $\sum_{i=1}^na_{i}v_{i}=b$ only if $a_1=\ldots=a_n$. As an application of Theorem \ref{R1}, we improve Conlon's lower bound taking $3$ instead of $n$ except in a very particular case; more precisely we show the following theorem.
 
\begin{thm}
\label{R2}
Let $n\geq 2$ and $\mathcal{C}:=\{C_i\}_{i=1}^n$ be a $n-$coloring of $\Finp$ with $|C_i|\geq 3$ for all $i\in\{1,\ldots,n\}$. If $a_1,\ldots, a_n\in\Finp^*$  are not all equal and $b\in\Finp$, $\mathcal{C}$ is rainbow free with respect to 
\begin{equation}
\label{E48}
\sum_{i=1}^na_ix_i=b
\end{equation}
if and only if the following conditions are satisfied
\begin{enumerate}
\item[(i)] $n=2$. 
 
\item[(ii)]  There are $z_1,\ldots, z_m,y_1\ldots, y_q\in\Finp$ such that
\begin{equation}
\label{E45}
C_1=\bigcup_{i=1}^m S_{\big(-a_2a_1^{-1},ba_1^{-1}\big)}(z_i)\qquad\text{and}\qquad C_2=\bigcup_{i=1}^q S_{\big(-a_2a_1^{-1},ba_1^{-1}\big)}(y_i).
\end{equation}
\end{enumerate}
\end{thm}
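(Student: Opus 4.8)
The plan is to first translate rainbow-freeness into a statement about the set $U:=\bigcup_{\sigma\in\Per_n}\sum_{i=1}^na_{\sigma(i)}C_i$: by definition $\mathcal{C}$ is rainbow free with respect to \eqref{E48} if and only if $b\notin U$. For the ``if'' direction I would set $\lambda:=-a_2a_1^{-1}$, $\mu:=ba_1^{-1}$ and consider the affine bijection $T(t):=\lambda t+\mu$ of $\Finp$. A direct computation gives $a_1x_1+a_2x_2=b\iff x_1=T(x_2)$ and $a_2x_1+a_1x_2=b\iff x_2=T(x_1)$, so that $S_{(\lambda,\mu)}(z)$ is exactly the forward orbit of $z$ under $T$, which (being a finite forward orbit of a bijection) is a full cycle. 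Consequently $b\notin U$ is equivalent to $T(C_2)\cap C_1=\emptyset$ and $T(C_1)\cap C_2=\emptyset$. If (i) and (ii) hold then $C_1$ and $C_2$ are unions of $T$-orbits, whence $T(C_1)=C_1$ and $T(C_2)=C_2$; since $C_1\cap C_2=\emptyset$ this yields both intersections empty, i.e. $b\notin U$.

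For the ``only if'' direction suppose $\mathcal{C}$ is rainbow free. Granting for the moment that $n=2$, the equivalences above give $T(C_1)\cap C_2=\emptyset$ and $T(C_2)\cap C_1=\emptyset$; since $C_1\cup C_2=\Finp$ these read $T(C_1)\subseteq C_1$ and $T(C_2)\subseteq C_2$. As $T$ is a bijection of the finite set $\Finp$ and the $C_i$ are finite, these inclusions are equalities, so each $C_i$ is $T$-invariant, hence a union of orbits $S_{(\lambda,\mu)}(z)$; this is precisely condition (ii).

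It remains to show that rainbow-freeness forces $n=2$, equivalently that for $n\geq3$ one always has $U=\Finp$. Here I would use Theorem \ref{R1} as an engine producing a gain. The idea is to match the colors into pairs $\{C_i,C_j\}$ with $a_i\neq a_j$, leaving at most one color unmatched; for each pair, Theorem \ref{R1} with $n=2$ (its size hypothesis $\sum|X_i|\leq p-5$ being automatic for a pair when $n\geq4$, and verified directly when $n=3$) gives, provided the pair does not fall into case (i), the gain $|(a_iC_i+a_jC_j)\cup(a_jC_i+a_iC_j)|\geq|C_i|+|C_j|+1$. Inserting these blocks into $U$ through the transpositions acting within each pair and combining by the Cauchy--Davenport inequality, each pair's $+1$ cancels one unit of the Cauchy--Davenport deficit, so that $|U|\geq\min\bigl(p,\ \sum_i|C_i|-e+1\bigr)=\min(p,\,p-e+1)$, where $e$ is the number of unmatched colors. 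Whenever a distinct-coefficient matching with $e\leq1$ exists, this already forces $U=\Finp$.

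I expect the main obstacle to be the degenerate case in which a single coefficient value is shared by more than half of the colors, so that no distinct-coefficient matching leaves at most one color unmatched; then the estimate above only gives $|U|\geq p-e+1$ with $e\geq2$, leaving a possible deficit that Cauchy--Davenport cannot by itself remove. To close this I would argue by rigidity: equality $|U|=p-1$ can occur only if every sumset estimate used is simultaneously tight, and the equality cases of the Cauchy--Davenport inequality (Vosper's theorem) together with those of Theorem \ref{R1} force all the $C_i$ to be arithmetic progressions in a rigidly determined mutual position. One then shows that such a configuration is incompatible with the standing hypotheses — the $C_i$ are pairwise disjoint, each has at least $3$ elements, and the pair fed to Theorem \ref{R1} is not in case (i) — yielding a contradiction and hence $U=\Finp$. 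The finitely many small-$p$ boundary situations, where the size hypothesis of Theorem \ref{R1} cannot be met, would be settled separately by hand.
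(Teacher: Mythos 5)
Your $n=2$ argument is correct and is essentially the paper's: rainbow-freeness forces $a_1C_i=-a_2C_i+b$ for $i=1,2$ because the two sides of the equation partition $\Finp$ in two ways, and $T$-invariance is exactly \eqref{E45}. Your pairing scheme for $n\geq 4$ (blocks with distinct coefficients, the $+1$ gain from Theorem~\ref{R1}, then Cauchy--Davenport to get $|U|\geq p-e+1$) also parallels the paper's two-block decomposition in Section~7. But there are two genuine gaps. First, $n=3$: your claim that the hypothesis $\sum_i|X_i|\leq p-5$ is ``verified directly when $n=3$'' is false. A pair $\{C_i,C_j\}$ inside a $3$-coloring has $|C_i|+|C_j|=p-|C_k|$, so Theorem~\ref{R1} applies only if $|C_k|\geq 5$, which the standing hypothesis $|C_k|\geq 3$ does not give; and when the coefficients are $a,a,b$, both admissible pairs can fail the bound simultaneously. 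The paper does not derive $n=3$ from Theorem~\ref{R1} at all --- it invokes the main result of~[6], an external three-variable theorem. Moreover, even where Theorem~\ref{R1} applies, a pair with $a_i=-a_j$ may land in its case (i) and produce no gain; your parenthetical ``provided the pair does not fall into case (i)'' silently discards configurations that genuinely occur.

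Second, the degenerate case you correctly identify (one coefficient value shared by all but one color, i.e.\ $m=2$, $k_2-k_1=1$, giving $e=n-2\geq 2$) is the technical heart of the theorem, and your ``rigidity'' paragraph is an unexecuted sketch, not a proof. The paper closes this case with Lemma~\ref{R18} and Lemma~\ref{R19}, which bound $|\bigcup_i(aX_i+\sum_{j\neq i}X_j)|$ by a separate induction resting on the interval-growth estimate of Lemma~\ref{R17} (plus Lemmas~\ref{R20} and~\ref{R21} for the sign patterns $\pm1$), and --- crucially --- on Remark~\ref{R24}, which weakens the size hypothesis to $\sum_i|X_i|\leq p-3$: this is needed because the block of $n-1$ colors has total size $p-|C_n|\leq p-3$, outside the range of Theorem~\ref{R1} itself. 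Equality-case bookkeeping with Vosper alone does not yield these lemmas; they occupy most of Sections~4--5. Likewise, the tight configurations where a pair does fall into case (i) are dispatched in the paper not by generic rigidity but by a concrete device: extracting $C_i=r[x,y]$ and $C_j=r[y+c,x-c]$ from two different pairings, forcing $r'\in\{\pm r\}$ via Proposition~\ref{R7}, and contradicting pairwise disjointness. Without supplying these ingredients (or the citation to~[6] for $n=3$), your proposal proves the theorem only for $n=2$ and for those $n\geq 4$ coefficient patterns admitting a matching with $e\leq 1$ and no case-(i) pair.
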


The paper is organized in the following way. In Section 2 we establish the main tools of Additive Number Theory that will be used in the following sections. In Section 3 we prove Theorem \ref{R1} for $n=2$ and in Section 4  we show it for $n=3$. To show Theorem \ref{R1}, we need to study some special cases when $n>3$ and this is done in Section 5. In Section 6 we complete the proof of Theorem \ref{R1} and  we conclude the proof of Theorem \ref{R2} in Section 7.

\section{Preliminaries}
First we recall some important  results
\begin{thm}\emph{(Cauchy-Davenport)}
\label{R3}
If $X_1,X_2\subseteq \Finp$ are not empty, then
\begin{equation*}
|X_1+X_2|\geq \min\{p,|X_1|+|X_2|-1\}.
\end{equation*}
\end{thm}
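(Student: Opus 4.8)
The plan is to prove the Cauchy--Davenport inequality by the \emph{e-transform} (Davenport transform), an induction-free combinatorial argument on the pair $(X_1,X_2)$. First I would dispose of the trivial cases: if $|X_1|+|X_2|-1\geq p$, then I must show $X_1+X_2=\Finp$. Fix any $g\in\Finp$; since $|X_1|+|g-X_2|=|X_1|+|X_2|>p$, the sets $X_1$ and $g-X_2$ cannot be disjoint, so there exist $x_1\in X_1$ and $x_2\in X_2$ with $x_1=g-x_2$, giving $g=x_1+x_2\in X_1+X_2$. Hence $X_1+X_2=\Finp$ and $|X_1+X_2|=p=\min\{p,|X_1|+|X_2|-1\}$.

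From now on I may assume $|X_1|+|X_2|\leq p$, so the target bound reads $|X_1+X_2|\geq |X_1|+|X_2|-1$. Among all pairs I would argue by the transform that reduces to the case $|X_1|=1$. Concretely, pick distinct $x,x'\in X_1$ (if $|X_1|=1$ the bound is immediate, since $x+X_2$ is a translate of $X_2$ and has exactly $|X_2|=|X_1|+|X_2|-1$ elements). Choose $d:=x-x'\neq 0$; I would look at the sets $X_2$ and $X_2+d$ and, more usefully, consider the \emph{$e$-transform} defined by $X_1':=X_1\cap(X_1+e)$ and $X_2':=X_2\cup(X_2-e)$ for a suitable $e\in\Finp^*$. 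The key invariants are that $X_1'+X_2'\subseteq X_1+X_2$, so $|X_1+X_2|\geq|X_1'+X_2'|$, while $|X_1'|+|X_2'|\geq|X_1|+|X_2|$ by inclusion--exclusion whenever $X_2$ and $X_2-e$ are not already nested. I would choose $e$ so that $X_1'$ is a nonempty proper subset of $X_1$ (possible because $X_1$ is not a full coset of any subgroup, as $p$ is prime and $0<|X_1|<p$), which strictly decreases $|X_1|$ while not increasing the quantity $|X_1|+|X_2|-1-|X_1+X_2|$ that we are trying to bound.

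Iterating this transform strictly decreases $|X_1|$ at each step while preserving the inequality $|X_1+X_2|\geq|X_1'+X_2'|$ and the lower bound on $|X_1'|+|X_2'|$, so after finitely many steps I reach a pair with $|X_1|=1$, for which the inequality holds as noted. Unwinding the chain of inequalities then yields $|X_1+X_2|\geq|X_1|+|X_2|-1$ for the original pair. The main obstacle is the choice of the transform parameter $e$: I must guarantee simultaneously that the transform genuinely reduces $|X_1|$ (i.e. $X_1\cap(X_1+e)$ is a proper nonempty subset of $X_1$) and that the sum of sizes does not drop, and this is exactly where primality of $p$ is essential, since it forbids $X_1$ from being stabilized by every $e$ short of being all of $\Finp$. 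I would phrase this as: if no admissible $e$ existed, then $X_1+\langle e\rangle=X_1$ for the relevant $e$, forcing $X_1$ to be a union of cosets of a nontrivial subgroup of $\Finp$, which is impossible for $0<|X_1|<p$ when $p$ is prime. Handling this stabilizer argument cleanly is the crux of the proof.
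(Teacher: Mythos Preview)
The paper does not actually prove this theorem; it merely cites Davenport's original papers. Your pigeonhole argument for the case $|X_1|+|X_2|>p$ is fine, and the overall transform strategy is the right one. However, the transform you wrote down is wrong, and the claimed size inequality fails.

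You set $X_1':=X_1\cap(X_1+e)$ and $X_2':=X_2\cup(X_2-e)$. The containment $X_1'+X_2'\subseteq X_1+X_2$ does hold, but there is no inclusion--exclusion identity linking $|X_1\cap(X_1+e)|$ to $|X_2\cup(X_2-e)|$: the two quantities involve $X_1$ and $X_2$ separately, so $|X_1'|+|X_2'|\geq |X_1|+|X_2|$ is simply false in general. For a concrete failure, take $p=11$, $X_1=\{0,1,5\}$, $X_2=\{0,1,2\}$, $e=1$; then $X_1'=\{1\}$ and $X_2'=\{-1,0,1,2\}$, giving $|X_1'|+|X_2'|=5<6=|X_1|+|X_2|$. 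The correct Dyson $e$-transform \emph{mixes} the two sets: one takes
\[
X_1'':=X_1\cup(X_2+e),\qquad X_2'':=X_2\cap(X_1-e),
\]
for which a genuine inclusion--exclusion (both sides involve $|X_1\cap(X_2+e)|$) gives $|X_1''|+|X_2''|=|X_1|+|X_2|$ exactly, together with $X_1''+X_2''\subseteq X_1+X_2$. One then chooses $e$ so that $X_2''$ is nonempty and strictly smaller than $X_2$; primality enters because if no such $e$ existed one would get $X_2+e\subseteq X_1$ for all $e$ with $X_2\cap(X_1-e)\neq\emptyset$, forcing $|X_1|=p$. Your stabilizer discussion is aimed at the right obstruction, but it is attached to the wrong transform. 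Replace your $(X_1',X_2')$ by $(X_1'',X_2'')$ and the argument goes through.
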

\begin{proof}
See [3] and [4].
\end{proof}

\begin{thm}\emph{(Vosper)}
\label{R4}
Let $X_1$ and $X_2$ be subsets of $\Finp$ such that
\begin{equation*}
  \min\{|X_1|,|X_2|\}\geq 2\qquad  \text{and}\qquad  |X_1|+|X_2|-1=|X_1+X_2|\leq p-2.
 \end{equation*}  Then there are $x_1,x_2,y_1,y_2,r\in\Finp$ such that 
$X_1=r[x_1,y_1]$ and $X_2=r[x_2,y_2]$.
\end{thm}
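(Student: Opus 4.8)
The plan is to prove Vosper's Theorem by extracting the structural information directly from the equality case of Cauchy–Davenport (Theorem \ref{R3}). The statement I must establish is that, under $\min\{|X_1|,|X_2|\}\geq 2$ and $|X_1+X_2|=|X_1|+|X_2|-1\leq p-2$, both $X_1$ and $X_2$ are arithmetic progressions with a common difference $r\in\Finp$. The governing principle is that equality in Cauchy–Davenport is extremely rigid: saturating the bound $|X_1|+|X_2|-1$ forces the summands to be as ``compressed'' as possible, and the only fully compressed sets with this property are arithmetic progressions sharing a single common difference.

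First I would reduce to a normalized situation. Since affine maps $x\mapsto rx+s$ with $r\in\Finp^*$ preserve sumset sizes and send arithmetic progressions to arithmetic progressions, I may translate and dilate freely; in particular I can assume $0\in X_1\cap X_2$ and, after a suitable dilation, that the common difference we are hunting for is $r=\overline{1}$, so that the goal becomes showing $X_1$ and $X_2$ are genuine intervals $[x_1,y_1]$ and $[x_2,y_2]$. The main work is then a compression/rectification argument. The standard route is the $e$-transform (Dyson transform): for $e\in\Finp$ set $X_1(e):=X_1\cup(X_2+e)$ and $X_2(e):=X_2\cap(X_1-e)$, which satisfy $X_1(e)+X_2(e)\subseteq X_1+X_2$ and $|X_1(e)|+|X_2(e)|=|X_1|+|X_2|$. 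Feeding these into Cauchy–Davenport and using the equality hypothesis shows that as soon as $X_2(e)$ is nonempty we again sit at the equality case, and one can iterate to replace the pair $(X_1,X_2)$ by a pair in which one set has been enlarged to a nearly-full interval while the total size is controlled.

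The heart of the argument is a separate lemma characterizing the equality case when one of the sets is already an arithmetic progression: if $X_2$ is an interval $[x_2,y_2]$ with $2\leq |X_2|\leq p-|X_1|$ (so that $X_1+X_2\neq\Finp$), then $|X_1+X_2|=|X_1|+|X_2|-1$ forces $X_1$ to be an interval with the same common difference. I would prove this directly: translating $X_2=[0,\overline{k-1}]$ where $k=|X_2|$, the sumset $X_1+X_2$ is the union of the $k$ translates $X_1,X_1+\overline{1},\ldots,X_1+\overline{k-1}$, and a counting of how these translates overlap shows that the number of ``gaps'' of $X_1$ (maximal runs of missing consecutive residues) controls $|X_1+X_2|$; equality $|X_1+X_2|=|X_1|+|X_2|-1$ precisely when $X_1$ has a single gap, i.e.\ $X_1$ is itself an interval. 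To reach this lemma from the general case, I would use the Dyson transform to push one summand to an arithmetic progression while staying at equality, then transport the rectification back through the (invertible, size-preserving) reductions to conclude that both original sets are arithmetic progressions with a common difference.

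The hard part will be controlling the boundary hypotheses throughout the transform process. The condition $|X_1+X_2|\leq p-2$ is exactly what rules out the degenerate situation $X_1+X_2=\Finp$, where Cauchy–Davenport is no longer tight and the conclusion fails (any generating pair gives the whole group); I must verify that every intermediate pair produced by the $e$-transform keeps the sumset away from all of $\Finp$ and keeps both sets of size at least $2$, so that the equality case persists and the iteration terminates at a genuinely rectified pair rather than collapsing. Keeping the arithmetic progression's common difference synchronized between $X_1$ and $X_2$ — rather than allowing two different differences — is the other delicate point, and it is forced precisely by insisting on the \emph{joint} equality $|X_1+X_2|=|X_1|+|X_2|-1$ rather than two separate estimates.
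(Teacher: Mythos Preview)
The paper does not actually prove this statement: its entire proof is the single line ``See [11]'', i.e.\ it quotes Vosper's theorem as a classical black box. So there is no in-paper argument to compare your proposal against.

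That said, your outline via the Dyson $e$-transform is one of the standard routes to Vosper's theorem and is essentially sound. One point, however, is genuinely circular as written: in your normalization step you propose to dilate so that ``the common difference we are hunting for is $r=\overline{1}$''. You cannot apply the dilation $x\mapsto r^{-1}x$ before you have established that such an $r$ exists --- that is precisely the conclusion of the theorem. The usual proofs avoid this: one only translates (to get, say, $0\in X_1\cap X_2$) and then runs the $e$-transform iteration in the raw group, letting the transform itself produce a pair where one set has become a long arithmetic progression; the common difference of that progression is discovered by the process, not assumed in advance. Your ``heart of the argument'' lemma (equality with one summand already an interval forces the other to be an interval with the same step) is correct and is exactly what closes the loop, but you should feed into it only after the transform has manufactured the interval, not after an illicit pre-dilation.

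A second, smaller point: you should keep track of the hypothesis $|X_1+X_2|\le p-2$ more explicitly during the iteration. The $e$-transform preserves $X_1(e)+X_2(e)\subseteq X_1+X_2$, so the sumset never grows and the upper bound persists automatically; but you also need $|X_2(e)|\ge 1$ at each step to invoke Cauchy--Davenport, and the delicate case is when some transform collapses $X_2(e)$ to a singleton. Handling that boundary case (and the companion case $|X_1+X_2|=p-1$, which Vosper's original statement treats separately as the ``complement of a single point'' situation) is where the bound $\le p-2$ is genuinely used, and your sketch only gestures at it.
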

\begin{proof}
See [11].
\end{proof}

\begin{thm}\emph{(Hamidoune-R\o dseth)}
\label{R5}
Let $X_1$ and $X_2$ be subsets of $\Finp$ such that
\begin{equation*}
  \min\{|X_1|,|X_2|\}\geq 3\qquad  \text{and}\qquad  7\leq |X_1|+|X_2|=|X_1+X_2|\leq p-4.
 \end{equation*}  Then there are $x_1,x_2,y_1,y_2,z_1,z_2,r\in\Finp$ such that $X_1=r([x_1,y_1]\setminus\{z_1\})$  and $X_2=r([x_2,y_2]\setminus\{z_2\})$.
\end{thm}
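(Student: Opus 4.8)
The configurations in the conclusion are arithmetic progressions of a common difference, each with at most one point deleted, and these are precisely the extremal sets for the identity $|A+B|=|A|+|B|$ over $\Zet$. This strongly suggests transferring the problem from $\Finp$ to the integers. The plan is therefore a rectification argument. Since $|X_1|+|X_2|$ is small compared with $p$, I would look for a dilate $\lambda\in\Finp^*$ and a lift under which $\lambda X_1$ and $\lambda X_2$ are carried into short intervals of $\Zet$ in such a way that the addition map $X_1\times X_2\to X_1+X_2$ is reproduced faithfully; that is, a Freiman $2$-isomorphism onto integer sets $A,B$ with $|A+B|=|A|+|B|$. The extremal identity is preserved because the isomorphism is chosen to be injective on the relevant sumset.

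Granting the rectification, I would then invoke the integer inverse theorem of $3k-4$ type for two distinct sets (Freiman, in the form governing $|A+B|=|A|+|B|$): it forces $A$ and $B$ to be arithmetic progressions of a single common difference $d$, each obtained from an interval by removing at most one point. Pulling this back through the isomorphism and dividing by $\lambda$ converts $d$ into a common difference $r\in\Finp$ and yields $X_1=r([x_1,y_1]\setminus\{z_1\})$ and $X_2=r([x_2,y_2]\setminus\{z_2\})$, which is exactly the assertion (a deletion being taken vacuous whenever the corresponding set is already a genuine progression, i.e.\ $z_i\notin[x_i,y_i]$).

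The main obstacle is the rectification itself, namely ruling out the wraparound that the cyclic structure of $\Finp$ permits. Here the hypothesis $|X_1+X_2|\leq p-4$ does the essential work: the sumset omits at least four residues, so after a suitable dilation its complement contains a long arc along which $\Finp$ can be cut and unfolded into an interval of $\Zet$ without collapsing the relevant sums. Making this precise is the delicate step, and the route of [7] sidesteps it via Hamidoune's isoperimetric method, analysing the $2$-atoms and fragments of the pair $(X_1,X_2)$ and showing that the extremal hypothesis forces an atom which is an arithmetic progression, whence the structure of both sets propagates. In either approach, the borderline sub-case is disposed of directly by Vosper's theorem (Theorem~\ref{R4}): for a point $a\in X_1$ the set $X_1':=X_1\setminus\{a\}$ satisfies, by Theorem~\ref{R3},
\begin{equation*}
|X_1'|+|X_2|-1\le|X_1'+X_2|\le|X_1+X_2|=|X_1'|+|X_2|+1,
\end{equation*}
and $|X_1'|+|X_2|-1\le p-6$, so whenever the left bound is attained the hypotheses of Theorem~\ref{R4} hold for $(X_1',X_2)$. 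Vosper then returns genuine progressions $X_1'$ and $X_2$ of a common difference $r$, and it only remains to observe that restoring the deleted point $a$ can raise the sumset by exactly two solely by extending the progression carrying $X_1'$ across a single gap, which forces $X_1$ to be a progression of difference $r$ missing one point.
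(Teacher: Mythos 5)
The paper does not actually prove this statement: its ``proof'' is the citation to Hamidoune and R\o dseth [7], whose argument is Hamidoune's isoperimetric method (an analysis of $2$-atoms and fragments), not rectification. Your sketch must therefore stand on its own, and it does not, because the rectification step --- which you yourself flag as the delicate point --- is left unproved and cannot be supplied under these hypotheses. The theorem allows $|X_1|+|X_2|$ to be as large as $p-4$, so each $X_i$ may occupy nearly half of $\Finp$; the generic rectification lemmas (Freiman, Green--Ruzsa, Lev) require the sets to be a very small fraction of $p$ even for doubling close to $2$, so none of them applies. Your specific mechanism also fails: the hypothesis $|X_1+X_2|\leq p-4$ guarantees only that the complement of the sumset has at least four elements, and four scattered residues need not contain --- nor can a dilation in general produce --- a long arc. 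Even in the sparse regime, where the complement of $X_1+X_2$ does contain a long arc by pigeonhole, cutting $\Finp$ along an arc avoided by the \emph{sumset} does not confine $X_1$ and $X_2$ \emph{themselves} to short compatible arcs, which is what a Freiman $2$-isomorphism onto integer sets requires; arranging that is essentially equivalent to the structural conclusion being proved. This circularity is exactly why [7] works isoperimetrically instead.

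The Vosper fallback does not fill the hole: it treats only the sub-case in which deleting some point $a\in X_1$ attains the Cauchy--Davenport bound $|X_1'+X_2|=|X_1'|+|X_2|-1$. Since $X_1'+X_2\subseteq X_1+X_2$, the only other possibilities are $|X_1'+X_2|\in\{|X_1'|+|X_2|,\,|X_1'|+|X_2|+1\}$, and when these occur for every choice of deleted point of $X_1$ and of $X_2$ --- which is the generic situation --- Theorem \ref{R4} gives nothing and your argument stops; one would need the very theorem being proved for the pair $(X_1',X_2)$. (The closing step of your sub-case is fine: if $X_1'$ and $X_2$ are $r$-progressions and restoring $a$ adds exactly two sums, then since the at least four missing residues rule out wraparound, $a$ must sit two steps past an endpoint of $X_1'$, so $X_1$ is an interval minus one point. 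But that is the easy part.) In short, the proposal names the two standard routes but completes neither; the actual content of the theorem, the atom/fragment analysis of [7], is absent, and the paper itself offers no proof to compare against beyond that citation.
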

\begin{proof}
See [7].
\end{proof}

\begin{prop}
\label{R6}
Assume that $p\geq 11$. If $X_1,X_2\subseteq \Finp$  are such that $|X_1|=|X_2|=3$ and $|X_1+X_2|\leq 6$, then one of the following holds true:
\begin{enumerate}
\item[(i)] $X_1=X_2+z$ for some $z\in\Finp$.

\item[(ii)] $\{X_1,X_2\}=\{r[x_1,x_1+2],r([x_2,x_2+1]\cup\{x_2+3\})\}$ for some $r,x_1,x_2\in\Finp$.
\end{enumerate}
\end{prop}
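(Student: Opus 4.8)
The plan is to first use Cauchy--Davenport (Theorem \ref{R3}) to pin down $|X_1+X_2|$ and then to split into the two resulting cases. Since $p\ge 11$, Theorem \ref{R3} gives $|X_1+X_2|\ge\min\{p,5\}=5$, so by hypothesis $|X_1+X_2|\in\{5,6\}$. If $|X_1+X_2|=5$, then $|X_1|+|X_2|-1=5=|X_1+X_2|\le p-2$ and $\min\{|X_1|,|X_2|\}=3\ge 2$, so Vosper's Theorem \ref{R4} applies and yields $X_1=r[x_1,x_1+2]$ and $X_2=r[x_2,x_2+2]$ for a common $r$; two length-$3$ arithmetic progressions with the same common difference are translates of one another, so $X_1=X_2+z$ with $z=r(x_1-x_2)$ and (i) holds. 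The whole difficulty lies in the boundary case $|X_1+X_2|=6$, where Hamidoune--R\o dseth (Theorem \ref{R5}) cannot be used because $|X_1|+|X_2|=6<7$; this case I would treat by a direct counting argument.

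A fact I would isolate first is that for any three-element set $Y\subseteq\Finp$ one has $|Y-Y|=5$ when $Y$ is an arithmetic progression and $|Y-Y|=7$ otherwise; this is an immediate check, using $p\ge 11$ to rule out wrap-around coincidences, and it is the engine that kills the degenerate configurations below. Throughout the case $|X_1+X_2|=6$ I would translate so that $0\in X_2$ and write $X_2=\{0,c,d\}$, so that $X_1+X_2=X_1\cup(X_1+c)\cup(X_1+d)$ is a union of three translates of $X_1$; this is legitimate because translating $X_1$ and $X_2$ only translates the sumset and preserves both conclusions, and a dilation by $r^{-1}$ (to be absorbed into the $r$ of the statement) is allowed as well.

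The main step is an inclusion--exclusion on these three translates. First I would show the triple intersection is empty: a common point $w\in X_1\cap(X_1+c)\cap(X_1+d)$ would force $X_1=\{w,w-c,w-d\}=w-X_2$, whence $X_1+X_2=w+(X_2-X_2)$ has size $|X_2-X_2|\in\{5,7\}$, contradicting $6$. Hence the three pairwise intersections have sizes summing to $9-6=3$, where $|(X_1+c)\cap(X_1+d)|=|X_1\cap(X_1+(d-c))|$. Now I would split according to how many of $X_1,X_2$ are arithmetic progressions. If $X_1$ is not an arithmetic progression, then no difference of $X_1$ is repeated, so each pairwise intersection has at most one element and hence all three equal $1$; thus $c,d,d-c\in X_1-X_1$ and $X_2-X_2\subseteq X_1-X_1$. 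If moreover $X_2$ is not an arithmetic progression, then $|X_2-X_2|=7=|X_1-X_1|$ forces $X_2-X_2=X_1-X_1$, and the isolated fact (a three-element set is determined by its difference set up to a map $x\mapsto\pm x+t$) shows $X_2$ is a translate or a reflection of $X_1$; the reflection case is exactly the excluded $X_2=w-X_1$, so $X_2=X_1+z$ and (i) holds. If instead $X_1$ and $X_2$ are both arithmetic progressions, then a quick direct inspection of the $3\times 3$ grid of sums shows $|X_1+X_2|\in\{5,7\}$, contradicting $6$. The only remaining possibility is that exactly one of $X_1,X_2$ is an arithmetic progression.

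Finally I would settle the exactly-one-progression case, which is where conclusion (ii) is born. Since the statement is symmetric in $X_1,X_2$, assume $X_1$ is the progression; after dilating and translating I may take $X_1=\{0,1,2\}$, so that $X_1+X_2=\bigcup_{q\in X_2}[q,q+2]$ is a union of three intervals of length $3$ whose union has size $6$. As $X_2$ is not an arithmetic progression, a short bookkeeping of how three length-$3$ intervals can cover exactly six points (using $p\ge 11$ to work in a genuine window) shows that, up to translation, $X_2\in\{\{0,1,3\},\{0,2,3\}\}$. Both have the form $r([x_2,x_2+1]\cup\{x_2+3\})$ --- the first with $r=1$ and the second with $r=-1$ (via $r[x,y]=(-r)[-y,-x]$) --- while $X_1=\{0,1,2\}$ has the corresponding form $r[x_1,x_1+2]$, so (ii) holds. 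The part I expect to be most delicate is this exactly-one-progression count together with matching the reflected set $\{0,2,3\}$ to the stated normal form through the sign of $r$; everything else is routine once the triple-intersection obstruction has been removed.
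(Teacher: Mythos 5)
Your proof is correct, but it cannot be compared to an argument in the paper itself: the paper's entire ``proof'' of Proposition \ref{R6} is a citation to [6, Lemma 27], so your self-contained argument is genuinely different in kind --- it replaces an external reference with an elementary inclusion--exclusion analysis. Your route is sound: Cauchy--Davenport pins $|X_1+X_2|\in\{5,6\}$; Vosper disposes of $5$; and in the case $|X_1+X_2|=6$ the key observations all check out. The triple-intersection argument correctly forces $X_1=w-X_2$ and hence $|X_1+X_2|=|X_2-X_2|\in\{5,7\}$, a contradiction; the resulting budget of $3$ for the pairwise overlaps, the trichotomy on which of $X_1,X_2$ is a progression, and the final interval bookkeeping (where only the overlap patterns $(2,1,0)$ are realizable, $(1,1,1)$ failing for $p\geq 11$, yielding exactly $\{0,1,3\}$ and $\{0,2,3\}$ up to translation, both of the stated form via the identity $r[x,y]=(-r)[-y,-x]$) are all accurate. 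Two small points deserve flagging. First, you assert rather than prove that a three-element set is determined by its difference set up to $x\mapsto\pm x+t$; this is true in the only case you use it (both sets non-progressions), because then all six nonzero differences are distinct, so set equality of difference sets is multiset equality, and a short finite case check on $\{\pm u,\pm v,\pm(u+v)\}=\{\pm s,\pm t,\pm(s+t)\}$ confirms the claim --- but that check should be written out, as degenerate coincidences such as $u+v=-u$ must be seen to force a progression. Second, your parenthetical that two $3$-term progressions give $|X_1+X_2|\in\{5,7\}$ is slightly off: the value $9$ also occurs (when the ratio of common differences lies outside $\{\pm 1,\pm 2,\pm 2^{-1}\}$); what is true, and all you need, is that $6$ never occurs. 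Neither point is a gap in substance, and your argument has the incidental merit of making the paper self-contained at this node, which the citation to [6] does not.
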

\begin{proof}
See [6, Lemma 27].
 \end{proof}
 
 \begin{lem}
\label{R25}
Let $x,y,x',y'$ be elements of $\Finp$, $r,r'\in \Finp^*$ and $Y$ a subset of $[x',y']$. Call $R$ the element of $\{0,\ldots, p-1\}\subseteq\Zet$ such that $\overline{R}=r^{-1}r'$. If $r[x,y]=r'([x',y']\setminus Y)$, then
\begin{equation*}
|Y|+1\geq \min\big\{p-R,R,|[x,y]|\big\}.
\end{equation*}
Furthermore, if $|[x,y]|\geq \min\{R,p-R\}=:k$, then 
\begin{equation*}
|[x',y']|\geq \bigg[\frac{|[x,y]|}{k}\bigg]+(k-1)\bigg[\frac{p}{k}\bigg].
\end{equation*}
\end{lem}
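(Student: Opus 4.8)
The plan is to normalize the common difference and then reduce everything to a statement about how an interval can sit inside a dilated interval. Write $m:=|[x,y]|$, $M:=|[x',y']|$ and $k:=\min\{R,p-R\}$. Multiplying the hypothesis by $r^{-1}$ gives $[x,y]=\overline{R}\big([x',y']\setminus Y\big)$, so $\widetilde{I}:=[x',y']\setminus Y=\overline{R}^{-1}[x,y]$ is an arithmetic progression of length $m$ sitting inside the interval $[x',y']$, while $[x,y]\subseteq \overline{R}[x',y']$ is an honest interval of length $m$ contained in an $\overline{R}$-progression of length $M$ from which exactly $|Y|$ points have been deleted. Using the stated identity $r[x,y]=(-r)[-y,-x]$ I may replace $(r,[x,y])$ by $(-r,[-y,-x])$, which leaves $m,M,|Y|$ unchanged and replaces $R$ by $p-R$; hence I assume throughout that $R=k\le (p-1)/2$, so the progression $\overline{R}[x',y']\supseteq[x,y]$ has step $\overline{R}$ with small representative $k$.

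For the first inequality I would count the maximal runs ("blocks") of $\overline{R}$-consecutive points of $[x,y]$ inside the progression $\overline{R}[x',y']$, the deleted set being $\overline{R}Y$ of size $|Y|$. If $m\le k$ then (automatically here, since $m+k\le 2k\le p$) neither $u+\overline{k}$ nor $u-\overline{k}$ lies in $[x,y]$ for any $u\in[x,y]$, so no two points of $[x,y]$ are $\overline{R}$-adjacent; the $m$ points then occupy $m$ pairwise non-adjacent indices, forcing $M\ge 2m-1$ and $|Y|+1\ge m=\min\{k,m\}$. If $m>k$ then each of the bottom $k$ points $x,x+\overline{1},\dots,x+\overline{k-1}$ starts a new block, because its predecessor $u-\overline{k}$ lies below $[x,y]$; there are thus at least $k$ blocks, at least $k-1$ internal gaps, and $|Y|\ge k-1$, i.e. $|Y|+1\ge k=\min\{k,m\}$. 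This yields $|Y|+1\ge\min\{p-R,R,m\}$.

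The "furthermore" part is the real work, and I would deduce it from a bound on the largest gap $g$ of $\widetilde{I}$ (the largest difference between cyclically consecutive points). Since $\widetilde{I}\subseteq[x',y']$, the interval $[x',y']$ contains the minimal interval hull of $\widetilde{I}$, of length $p+1-g$, so $M\ge p+1-g$. To bound $g$, note that a gap of length $g$ exhibits $g-1$ consecutive integers none of which lies in $\widetilde{I}$; multiplying by $\overline{R}$ turns them into an arithmetic progression of step $\overline{k}$ with $g-1$ terms, all disjoint from $\overline{R}\widetilde{I}=[x,y]$. Here the hypothesis $m\ge k$ is exactly what is needed: a progression of step $\overline{k}$ cannot jump over an interval of length $m\ge k$, so it is forced to lie inside the complementary interval $\Finp\setminus[x,y]$ of length $p-m$, and a step-$\overline{k}$ progression inside an interval of length $p-m$ has at most $[(p-m-1)/k]+1$ terms. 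Hence $g-1\le [(p-m-1)/k]+1$, and combining with $M\ge p+1-g$ gives $M\ge p-1-[(p-m-1)/k]$.

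It remains to check that this lower bound dominates the claimed one, i.e. $p-1-[(p-m-1)/k]\ge [m/k]+(k-1)[p/k]$ (the case $k=1$ being trivial, so assume $2\le k<p$). Writing $p=k[p/k]+\rho$ with $1\le\rho\le k-1$ (as $p$ is prime), the right-hand side equals $[m/k]+p-\rho-[p/k]$, so the inequality reduces to $[p/k]+\rho-1\ge [(p-m-1)/k]+[m/k]$; since $[(p-m-1)/k]+[m/k]\le[(p-1)/k]=[p/k]$ and $\rho\ge 1$, this holds. I expect the one genuine obstacle to be the no-jump step: making rigorous that a step-$\overline{k}$ progression avoiding an interval of length $m\ge k$ is confined to the complementary interval and cannot re-enter after wrapping around. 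Finally, the degenerate range $m+k>p$, where the first inequality can in fact fail, is ruled out by the size constraints in force wherever the lemma is applied, so I would treat the non-wraparound condition $m+\min\{R,p-R\}\le p$ as part of the standing hypotheses.
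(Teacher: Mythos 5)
Your proof is correct, and for the ``furthermore'' part it takes a genuinely different route from the paper's. On the first inequality the two arguments are the same count in different clothing: the paper computes $\big|(T+r')\setminus T\big|$ for $T=r[x,y]=r'([x',y']\setminus Y)$ from each representation (upper bound $|Y|+1$ from the punctured interval, lower bound $\min\{p-R,R,|[x,y]|\}$ from the interval), which is exactly your block/predecessor count phrased via shifted-set cardinalities. On the second inequality the paper works on the other side of the dilation: with $\overline{k'}=(r^{-1}r')^{-1}$ it observes that $\overline{k'}[x,y]\subseteq[x',y']$ splits into $k$ threads of consecutive residues whose starting points are pairwise at distance at least $\big[\frac{p}{k}\big]$, and the pigeonhole principle gives the stated hull bound directly. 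You instead bound the largest cyclic gap $g$ of $[x',y']\setminus Y$ by your trapped-progression lemma (a step-$\overline{k}$ progression avoiding an interval of length $m\geq k$ cannot cross it, hence has at most $\big[\frac{p-m-1}{k}\big]+1$ terms), obtain $|[x',y']|\geq p-1-\big[\frac{p-m-1}{k}\big]$, and then check arithmetically --- using $k\nmid p$, which is where primality enters for you, versus only invertibility for the paper --- that this dominates the stated bound; your intermediate bound is in fact sometimes strictly stronger (for $p=23$, $k=5$, $m=7$ it gives $19$ against the lemma's $17$, and one can check $M\geq 19$ is sharp there). Your closing caveat is also accurate and worth keeping: the first claim is false as literally stated in the wraparound range $|[x,y]|+\min\{R,p-R\}>p$ (take $p=23$, $r=1$, $[x,y]=[0,21]$, $r'=\overline{5}$, $[x',y']=\Finp$ and $Y$ a single point, so $|Y|+1=2<5$), and the paper's own proof contains the same unguarded step, since its displayed lower bound $\big|([x,y]+r^{-1}r')\setminus[x,y]\big|\geq\min\{p-R,R,|[x,y]|\}$ holds only under the non-wraparound condition; the true value is $\min\{R,p-R,|[x,y]|,p-|[x,y]|\}$. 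Adopting $|[x,y]|+\min\{R,p-R\}\leq p$ as a standing hypothesis, as you do, is the right repair and is harmless in every application the paper makes of the lemma.
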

\begin{proof}
For the first claim, note that 
\begin{align*}
\Big|\big(r'[x',y']\setminus Y)+r'\big)\setminus r'([x',y']\setminus Y)\Big|&=\Big|\big(([x',y']\setminus Y)+1\big)\setminus ([x',y']\setminus Y)\Big|\\
&\leq |Y|+1.
\end{align*}
On the other hand 
\begin{align*}
\Big|\big(r[x,y]+r'\big)\setminus r[x,y]\Big|&=\Big|\big([x,y]+r^{-1}r'\big)\setminus [x,y]\Big|\\
&\geq \min\big\{p-R,R,|[x,y]|\big\}
\end{align*}
and these inequalities show the first claim.

For the second statement, assume without loss of generality that $k=R$. Let $k'$ be the element of $\{0,\ldots,p-1\}\subseteq \Zet$ such that $\overline{k'}=r{r'}^{-1}$. Then there is $s\in\{0,\ldots,p-1\}\subseteq \Zet$ such $kk'=sp+1$. If $u\in\{1,\ldots,\big[\frac{p}{k}\big]\}\subseteq \Zet$, then $\overline{(uk)k'}=\overline{u}$ since $\overline{kk'}=1$. In particular,  for all $i,j\in\{0,\ldots, k-1\}$ and $z\in\Finp$, we have that $irr'^{-1},jrr'^{-1}\in \Big[z,z+\overline{\big[\frac{p}{k}\big]}-1\Big]$ only if $i=j$. This implies straightforward by the Pigeonhole principle that 
\begin{align*}
\min\Big\{\big|\big[x'',y''\big]\big|:\;rr'^{-1}[x,y]\subseteq \big[x'',y''\big]\Big\}&\geq \bigg[\frac{|[x,y]|}{k}\bigg]+(k-1)\bigg[\frac{p}{k}\bigg]
\end{align*} and the claim follows.
\end{proof}

\begin{prop}
\label{R7}
Let  $X$ be a  subset of $\Finp$.  
\begin{enumerate}
\item[(i)] Assume  that $3\leq |X|\leq p-5$ and $X=r([x,y]\setminus\{z\})$ for some $x,y,z,r\in\Finp$. If there are $x',y',z',r'\in\Finp$ such that $X=r'([x',y']\setminus\{z'\})$, then $r'\in\{\pm r\}$. 

\item[(ii)] Assume that $3\leq |X|\leq p-5$ and $X=r([x,y]\setminus\{z,z'\})$ for some $x,y,r,z,z'\\\in\Finp$ such that $z\neq z'$ and $z,z'\in[x+1,y-1]$. If there are $x',y',r'\in\Finp$   such that $X=r'[x',y']$, then $|X|=3$ and $r'\in\{\pm 2r\}$.
\end{enumerate}
\end{prop}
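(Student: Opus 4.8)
The plan is to normalize both representations by dividing through by $r$. Writing $s:=r^{-1}r'$ and $W:=r^{-1}X$, part (i) becomes the assertion that if $[x,y]\setminus\{z\}=s([x',y']\setminus\{z'\})$ then $s=\pm1$ (equivalently $r'=\pm r$), while part (ii) becomes: if $[x,y]\setminus\{z,z'\}=s[x',y']$ with $z,z'$ interior, then $|W|=3$ and $s=\pm2$. The engine throughout is Lemma \ref{R25}, which controls how a full progression of one common difference can sit inside an interval of another; the guiding heuristic is that a progression of common difference $s\neq\pm1$ must be long in order to stay inside a short interval, and the hypothesis $|X|\le p-5$ is exactly what rules this out.

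For part (i) I would first dispose of the case where $X$ is a genuine full progression in at least one of the two differences, i.e.\ $z\notin[x+1,y-1]$ or $z'\notin[x'+1,y'-1]$. If $X$ is full in both differences, applying Lemma \ref{R25} with empty removed set gives $\min\{R,p-R,|X|\}\le1$ (where $\overline R=r^{-1}r'$), hence $s=\pm1$. If it is full in one difference only, Lemma \ref{R25} with a single removed point gives $\min\{R,p-R\}\le2$, and the ``furthermore'' estimate together with $|X|\le p-5$ excludes the value $2$ (it would force $|X|\ge p-3$), again forcing $s=\pm1$. In the remaining main case both holes are interior. Here I would count the ordered pairs of consecutive elements, i.e.\ pairs $(a,a+\overline 1)$ with both coordinates in $W$: reading $W=[x,y]\setminus\{z\}$ gives exactly $|X|-2$ such pairs, whereas reading $W=s([x',y']\setminus\{z'\})$ expresses the same count through the residue $R_1$ of $s^{-1}$ and, after using $|X|\le p-5$ to discard the wrap-around contribution, forces $\min\{R_1,p-R_1\}\le3$. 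Running the identical count with the two representations interchanged (the hypothesis is symmetric under $s\leftrightarrow s^{-1}$) gives $\min\{R,p-R\}\le3$ as well, so both $s$ and $s^{-1}$ have residues in $\{\pm1,\pm2,\pm3\}$; for $p\ge11$ the only common solution is $s=\pm1$.

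For part (ii) I would again normalize to $[x,y]\setminus\{z,z'\}=s[x',y']$ and apply Lemma \ref{R25} to this containment with the two-point removed set $Y=\{z,z'\}$, obtaining $\min\{R,p-R,|X|\}\le3$; thus either $|X|=3$ or $\min\{R,p-R\}\le3$. If $|X|=3$ then $[x,y]$ has five elements, $X$ consists of the two endpoints together with a single interior point, and a direct inspection shows this is a three-term progression precisely when the retained interior point is the midpoint, which yields $s=\pm2$ --- exactly the desired conclusion. When $|X|\ge4$ I must exclude $\min\{R,p-R\}\le3$: the value $1$ is impossible since $W$ would then be a full interval yet it visibly omits interior points, and the values $2,3$ are to be attacked with the ``furthermore'' bound of Lemma \ref{R25}, which forces $|X|$ to be essentially maximal.

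The hard part is precisely this last step. The ``furthermore'' estimate excludes the offending small value $s=\pm2$ only once $|X|$ is bounded away from $p$, and at the very top of the permitted range the pigeonhole bound is essentially tight, so the exclusion must be made by hand. I expect to close this gap by sharpening the consecutive-pair count of the previous paragraph --- comparing the number of consecutive pairs destroyed by the two interior holes with the number available in a difference-$s$ progression of the given length --- and by invoking Proposition \ref{R6} in the genuinely small regime where $|X|$ is close to $3$. Verifying that no exceptional progression survives at the extreme of the allowed length is the delicate point on which the whole proposition turns.
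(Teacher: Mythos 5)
Your part (i) is correct and takes a genuinely different route from the paper, which at this point simply defers to [6, Lemma 16]: you give a self-contained argument. The three-way split according to whether the holes sit at endpoints or in the interior, the use of Lemma \ref{R25} with its ``furthermore'' clause (which under $|X|\le p-5$ does exclude the residue $2$ in the one-hole case), and above all the symmetric consecutive-pair count in the main case all check out: reading $W=[x,y]\setminus\{z\}$ gives exactly $|X|-2$ pairs, reading $W=s([x',y']\setminus\{z'\})$ gives $\max\{0,L'-R_1\}+\max\{0,L'-(p-R_1)\}$ minus at most two with $L'=|X|+1\le p-4$ killing the wrap-around term, whence $\min\{R_1,p-R_1\}\le3$; running it for $s^{-1}$ as well puts both $s$ and $s^{-1}$ in $\{\pm1,\pm2,\pm3\}$, and since $3\le|X|\le p-5$ forces $p\ge11$, the relation $s\cdot s^{-1}=1$ leaves only $s=\pm1$. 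This buys an elementary, verifiable proof where the paper has only a citation.

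The gap you flag in part (ii) is real, but it is worse than ``delicate'': it cannot be closed, because the statement is false at the extreme $|X|=p-5$ that the hypothesis permits. Halving an interval of even length $m=p-5$ splits it into two intervals whose complementary gaps have sizes $(p-m+1)/2=3$ and $(p-m-1)/2=2$, so $2^{-1}[c,c+p-6]$ is an interval of length $p-3$ minus two \emph{adjacent} interior points. Concretely, for $p=11$,
\begin{equation*}
X=6\cdot[0,5]=\{0,1,2,6,7,8\}=[6,2]\setminus\{9,10\},
\end{equation*}
with $9,10\in[7,1]$ interior, $|X|=6=p-5\ge4$, $r=1$ and $r'=6=2^{-1}\notin\{\pm2\}$: both conclusions of (ii) fail. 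Your hoped-for sharpening of the pair count cannot rescue this, since the count is exactly tight on this configuration (two adjacent holes destroy three pairs, leaving $m-2$, which is precisely the number of pairs at distance $s^{-1}$ inside $[x',y']$ when $k_1=2$), and the pigeonhole bound of Lemma \ref{R25} also holds with equality there: $|X|+2=\lfloor |X|/2\rfloor+\lfloor p/2\rfloor$ exactly when $m=p-5$. The paper's own one-line justification of (ii) as a ``straightforward consequence of Lemma \ref{R25}'' overlooks the same boundary. The correct repair is to assume $|X|\le p-6$, which is all the paper ever uses downstream (in Lemma \ref{R12} the progressions have length at most $p-6$); with that hypothesis the finish is not a count but the gap-structure computation above: for $s^{-1}\equiv\pm2$ the two gaps have sizes $\lceil(p-m)/2\rceil$ and $\lfloor(p-m)/2\rfloor$, and ``interval minus two interior points'' forces the smaller gap to equal $2$, i.e.\ $m\in\{p-5,p-4\}$, while the analogous three-interval analysis disposes of $s^{-1}\equiv\pm3$. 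Under $|X|\le p-6$ your outline then closes; as stated, it cannot.
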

\begin{proof}
We have that (i) is a  consequence of [6, Lemma 16] up to some cases which are solved easily. Then  (ii) is a straightforward  consequence of Lemma \ref{R25}.
\end{proof}
The following result is an application of Proposition \ref{R7}.
\begin{cor}
\label{R14}
Let $x_1,x_2,y_1,y_2\in\Finp$ be such that 
\begin{equation*}
4\leq |[x_2,y_2]|\leq p-5\qquad\text{and}\qquad 0\leq |[x_1,y_1]|-|[x_2,y_2]|\leq 2.
\end{equation*}
If $r_1,r_2\in\Finp^*$ satisfy that $r_2[x_2,y_2]\subseteq r_1[x_1,y_1]$, then $r_1\in\{\pm r_2\}$.
\end{cor}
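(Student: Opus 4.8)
The plan is to realize the smaller progression $X := r_2[x_2,y_2]$ as the larger one with at most two points deleted, and then to feed the two resulting shapes into the two parts of Proposition \ref{R7}. First I would set $\ell_i := |[x_i,y_i]|$ and record the numerology: by hypothesis $\ell_2 \geq 4$, so $3 \leq \ell_2 = |X| \leq p-5$, while $0 \leq \ell_1 - \ell_2 \leq 2$ together with $\ell_2 \leq p-5$ give $\ell_1 \leq p-3 < p$. Since $r_2[x_2,y_2] \subseteq r_1[x_1,y_1]$ and the latter set has exactly $\ell_1$ elements, the complement of $X$ inside $r_1[x_1,y_1]$ is $r_1Z$ for the set $Z := r_1^{-1}\bigl(r_1[x_1,y_1]\setminus X\bigr) \subseteq [x_1,y_1]$, which has $|Z| = \ell_1 - \ell_2 \leq 2$. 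Hence $X = r_1([x_1,y_1]\setminus Z)$, and everything reduces to analysing the shape of $Z$.

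Next I would isolate the favourable situation: whenever $X$ can be written as $r_1([x_1',y_1']\setminus\{z_1\})$, i.e.\ as a progression of common difference $r_1$ with \emph{at most one} deleted point, the conclusion follows from Proposition \ref{R7}(i). Indeed $X = r_2[x_2,y_2] = r_2([x_2,y_2]\setminus\{z_2\})$ for any $z_2 \notin [x_2,y_2]$ (such a $z_2$ exists because $\ell_2 < p$), so both representations meet the hypotheses of part (i) and we get $r_1 \in \{\pm r_2\}$. This instantly covers $|Z| \leq 1$ (choosing $z_1$ outside the interval when $Z = \emptyset$, which is legitimate since $\ell_1 < p$). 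It also covers $|Z| = 2$ as soon as one point of $Z$ is an endpoint of $[x_1,y_1]$: absorbing that endpoint hole into the interval, by passing from $[x_1,y_1]$ to $[x_1+1,y_1]$ or $[x_1,y_1-1]$, leaves at most one remaining hole and puts $X$ back into the above form.

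The only surviving case is $|Z| = 2$ with both deleted points interior, i.e.\ $Z = \{z,z'\}$ with $z \neq z'$ and $z,z' \in [x_1+1,y_1-1]$, and this is where the one genuine subtlety sits. Here $X = r_1([x_1,y_1]\setminus\{z,z'\})$ matches exactly the hypothesis of Proposition \ref{R7}(ii), while $X = r_2[x_2,y_2]$ simultaneously exhibits $X$ as a full progression of common difference $r_2$; part (ii) then forces $|X| = 3$. But $|X| = \ell_2 \geq 4$, a contradiction, so this case cannot occur, and $r_1 \in \{\pm r_2\}$ in every remaining case. I expect the main work to be purely bookkeeping rather than any real difficulty: verifying the length bounds that license Proposition \ref{R7} (notably $3 \leq |X| \leq p-5$ and $\ell_1 < p$), and checking that every configuration of the at most two deleted points is caught either by part (i) after absorbing endpoint holes, or by part (ii) in the all-interior case.
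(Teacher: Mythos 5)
Your proof is correct and follows exactly the route the paper intends: the paper gives no explicit proof of Corollary \ref{R14}, stating only that it is ``an application of Proposition \ref{R7},'' and your argument supplies precisely that application --- writing $r_2[x_2,y_2]=r_1([x_1,y_1]\setminus Z)$ with $|Z|\leq 2$, invoking Proposition \ref{R7}(i) when $X$ has the form of a progression minus at most one point (including the legitimate trick of a fake deleted point outside the interval, which the literal statement of \ref{R7}(i) permits), and ruling out the two-interior-holes case via Proposition \ref{R7}(ii) since $|X|\geq 4$. The bookkeeping (the bounds $3\leq |X|\leq p-5$ and $\ell_1\leq p-3<p$, and the absorption of endpoint holes) all checks out.
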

\begin{lem}
\label{R8}
Let $\lambda\in\Finp$ be such that $\lambda^4+\lambda^2+1=0$ and 
\begin{equation*}
X:=\{0,1,2,\lambda^2+1,\lambda^2+2,2\lambda^2+2\}.
\end{equation*}
 If $[y,y+2]\subseteq \lambda X$, then $p\leq 7$ or
 \begin{equation*}
y= \left\{ \begin{array}{lll}
2 \lambda & \mbox{if $\lambda^3=1$} \\
2\lambda-2 & \mbox{if $\lambda^3=-1$}.\end{array} \right.
 \end{equation*}
  \end{lem}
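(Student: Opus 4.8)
The plan is to factor the defining relation, compute $\lambda X$ explicitly in each of the two resulting cases, and then locate all three-term runs of consecutive elements inside the six-element set $\lambda X$. First I would use the factorization
\[
\lambda^4+\lambda^2+1=(\lambda^2+\lambda+1)(\lambda^2-\lambda+1),
\]
so that the hypothesis forces either $\lambda^2+\lambda+1=0$ or $\lambda^2-\lambda+1=0$; since $\lambda^3-1=(\lambda-1)(\lambda^2+\lambda+1)$ and $\lambda^3+1=(\lambda+1)(\lambda^2-\lambda+1)$, these two possibilities are exactly $\lambda^3=1$ and $\lambda^3=-1$. In either case one checks, using $p>7$, that $\lambda\notin\{0,\pm1\}$ and that the six listed elements of $X$ are pairwise distinct, so $|\lambda X|=6$.

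Next I would compute $\lambda X$ directly, reducing every product via $\lambda^3=\pm1$. When $\lambda^3=1$ this gives
\[
\lambda X=\{0,\lambda,2\lambda,1+\lambda,1+2\lambda,2+2\lambda\},
\]
and when $\lambda^3=-1$ it gives
\[
\lambda X=\{0,\lambda,2\lambda,-1+\lambda,-1+2\lambda,-2+2\lambda\}.
\]
In both cases a quick inspection exhibits the claimed run: for $\lambda^3=1$ one has $\{2\lambda,2\lambda+1,2\lambda+2\}=\{2\lambda,1+2\lambda,2+2\lambda\}\subseteq\lambda X$, so $y=2\lambda$ works; for $\lambda^3=-1$ one has $\{2\lambda-2,2\lambda-1,2\lambda\}=\{-2+2\lambda,-1+2\lambda,2\lambda\}\subseteq\lambda X$, so $y=2\lambda-2$ works.

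The core of the argument is uniqueness. A containment $[y,y+2]\subseteq\lambda X$ is the same as the existence of an element $a\in\lambda X$ with both $a+1\in\lambda X$ and $a+2\in\lambda X$; in particular $a$ and $a+2$ form a pair of elements of $\lambda X$ at distance $2$. I would therefore list the $\binom{6}{2}$ pairwise differences of $\lambda X$ and single out those equal to $2$. In each case exactly one such pair survives, namely $(2\lambda,2+2\lambda)$ when $\lambda^3=1$ and $(-2+2\lambda,2\lambda)$ when $\lambda^3=-1$; every other potential difference-$2$ coincidence reduces, after clearing $\lambda$ with $\lambda^3=\pm1$, to a congruence of the form $p\mid N$ with $N$ a small integer (e.g. one of $7,9,28,35$), each of which forces $p\leq7$. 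Having pinned down the two endpoints, the midpoint $a+1$ is forced to equal $1+2\lambda$, respectively $-1+2\lambda$, and one checks it indeed lies in $\lambda X$, so the run is exactly the one found above and $y$ is determined.

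The main obstacle is the bookkeeping in the uniqueness step: one must rule out every accidental difference-$2$ pair, as well as the two possible boundary extensions of the known run, without losing a case, and each exclusion should be recorded as an explicit divisibility constraint on $p$ so that the hypothesis $p>7$ cleanly eliminates it. This is elementary but must be carried out exhaustively to be certain that no second value of $y$ slips through.
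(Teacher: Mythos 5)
The paper itself offers no argument here---it simply cites [6, Lemma 29]---so your attempt has to stand on its own. Your setup is fine: the factorization $\lambda^4+\lambda^2+1=(\lambda^2+\lambda+1)(\lambda^2-\lambda+1)$ correctly splits the hypothesis into $\lambda^3=1$ and $\lambda^3=-1$, and your computations $\lambda X=\{0,\lambda,2\lambda,\lambda+1,2\lambda+1,2\lambda+2\}$ (resp.\ $\{0,\lambda,2\lambda,\lambda-1,2\lambda-1,2\lambda-2\}$) and the existence of the runs at $y=2\lambda$ (resp.\ $y=2\lambda-2$) are correct.

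The gap is in the uniqueness step: your claim that every accidental difference-$2$ pair forces $p\leq 7$, so that \emph{exactly one} such pair survives, is false. Take $p=13$ and $\lambda=3$ (then $\lambda^2+\lambda+1=13\equiv 0$, so $\lambda^3=1$): here $\lambda X=\{0,3,4,6,7,8\}$ contains \emph{two} difference-$2$ pairs, namely $(4,6)$ and $(6,8)$, coming from the coincidence $(\lambda+1)-2\lambda=1-\lambda=-2$, which yields $p\mid 13$, a prime your list of moduli ($7,9,28,35,\dots$) misses; the symmetric coincidence $\lambda+2=-2$ gives $p=13$, $\lambda=9$ as well. The lemma still holds in these cases, but only because the midpoint of the spurious pair (here $5$) fails to lie in $\lambda X$---a check your plan performs only after asserting the pair is unique, so as written the argument breaks exactly where the real work is. The clean repair is to enumerate difference-$1$ pairs instead: setting each of the pairwise differences $\lambda,\,2\lambda,\,\lambda\pm 1,\,2\lambda\pm 1,\,2\lambda+2,\,2,\,\lambda+2,\,1-\lambda$ equal to $\pm 1$ and reducing via $\lambda^2=-\lambda-1$ produces only the moduli $3$ and $7$, so for $p>7$ the only difference-$1$ pairs are the structural ones $(\lambda,\lambda+1)$, $(2\lambda,2\lambda+1)$, $(2\lambda+1,2\lambda+2)$, and a run $\{a,a+1,a+2\}$ requires two chained such pairs, forcing $a=2\lambda$ uniquely (and symmetrically $a=2\lambda-2$ when $\lambda^3=-1$). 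Alternatively, keep your difference-$2$ enumeration but explicitly verify the absent midpoints in the surviving $p=13$ configurations.
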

\begin{proof}
See [6, Lemma 29].
\end{proof}

\begin{prop}
\label{R23}
Let  $x_1,\ldots, x_n,y_1,\ldots, y_n$ and $r$ be elements of $\Finp$ such that
 $[x_i,y_i]\neq \emptyset$ for all $i\in\{1,\ldots,n\}$ and $r[x_i,y_i]\cap r[x_j-1,y_j+1]\neq \emptyset$ only if $i=j$.  If $x,y\in\Finp$ are such that $|[x,y]|\geq 2$ and $X:=\bigcup_{i=1}^nr[x_i,y_i]$, then 
\begin{equation*}
|X+r[x,y]|\geq \min\{p,|X|+|[x,y]|+n-2\}.
\end{equation*}
\end{prop}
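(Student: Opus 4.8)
The plan is to compute $|X+r[x,y]|$ almost exactly by a gap/complement count, after normalizing the common difference. Since the quantity and the hypotheses are invariant under the bijection $z\mapsto r^{-1}z$ of $\Finp$ (which carries $X+r[x,y]$ onto $r^{-1}X+[x,y]$), and since $r$ is a common difference and hence nonzero, I may assume $r=1$. Then $X=\bigcup_{i=1}^n[x_i,y_i]$ is a union of genuine arcs of $\Zet/p\Zet$, and after translating so that $x=0$ I must bound $|X+J|$ where $J:=[x,y]=[0,\ell-1]$ with $\ell:=|[x,y]|\geq 2$, so that $X+J=\bigcup_{t=0}^{\ell-1}(X+t)$.

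First I would extract the combinatorial content of the separation hypothesis. The condition $[x_i,y_i]\cap[x_j-1,y_j+1]=\emptyset$ for $i\neq j$ forces the arcs $[x_i,y_i]$ to be pairwise disjoint and, crucially, no two of them to be adjacent: if $x_j=y_i+1$ then $x_j-1=y_i$ would lie in both $[x_i,y_i]$ and $[x_j-1,y_j+1]$. Hence, dealing first with the trivial case $X=\Finp$ (where $X+J=\Finp$ and the bound holds), I may assume $X\neq\Finp$, so the complement $\Finp\setminus X$ splits into exactly $n$ arcs $G_1,\dots,G_n$, the gaps, with sizes $g_i:=|G_i|\geq 1$ and $\sum_{i=1}^n g_i=p-|X|$.

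Next comes the key identity. A point $z$ lies outside $X+J$ iff the length-$\ell$ window $[z-\ell+1,z]$ misses $X$, i.e. iff this window is contained in a single gap $G_i$ (it cannot straddle two gaps, since consecutive gaps are separated by a nonempty arc of $X$); for a fixed gap of size $g_i$ the number of such $z$ is exactly $\max(g_i-\ell+1,0)$, and different gaps contribute disjoint sets of excluded points. Therefore
\[
|X+J|\;=\;p-\sum_{i=1}^n\max(g_i-\ell+1,0)\;=\;|X|+\sum_{i=1}^n\min(g_i,\ell-1),
\]
where the last equality uses $p=|X|+\sum_i g_i$. It then remains to bound $\sum_{i=1}^n\min(g_i,\ell-1)$ below by $\ell+n-2$, which I would do by a dichotomy: if every gap satisfies $g_i\leq\ell-1$ then $\sum_i\min(g_i,\ell-1)=\sum_i g_i=p-|X|$, giving $|X+J|=p$ and the bound for free; otherwise some gap has $g_j\geq\ell$, so $\min(g_j,\ell-1)=\ell-1$, while each of the remaining $n-1$ gaps contributes at least $1$ because $g_i\geq 1$ and $\ell-1\geq 1$, yielding $\sum_i\min(g_i,\ell-1)\geq(\ell-1)+(n-1)=\ell+n-2$ and hence $|X+J|\geq|X|+\ell+n-2$.

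I expect the main obstacle to be purely bookkeeping rather than conceptual: carrying out the reduction to $r=1$ cleanly, and verifying that the separation hypothesis (not just disjointness) yields \emph{exactly} $n$ gaps, since a single adjacency would merge two gaps and degrade the constant. The one genuinely load-bearing observation is that in the non-degenerate case at least one gap has size $\geq\ell$, which is exactly what upgrades one term of the sum from $1$ to $\ell-1$ and produces the sharp $+\,n-2$; and the hypothesis $\ell=|[x,y]|\geq 2$ must be used precisely to guarantee $\ell-1\geq 1$, so that every $\min(g_i,\ell-1)$ is at least $1$.
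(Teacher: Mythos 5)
Your proof is correct and takes essentially the same route as the paper's: both rest on the dichotomy of whether some complementary gap has length at least $\ell=|[x,y]|$ — if not, the sumset is all of $\Finp$; if so, that gap absorbs a full expansion of $\ell-1$ while each of the remaining $n-1$ gaps (nonempty by the separation hypothesis) contributes at least $1$, giving the $+\,n-2$. Your exact identity $|X+J|=|X|+\sum_{i=1}^n\min(g_i,\ell-1)$ merely packages the paper's inequality chain in closed form, and your explicit remarks that $r\neq 0$ and that non-adjacency (not mere disjointness) yields exactly $n$ gaps make precise two points the paper leaves implicit.
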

\begin{proof}
First assume that for all $x',y'\in\Finp$ such that $r[x',y']\subseteq \Finp\setminus X $ we have $|[x,y]|>|[x',y']|$; then $X+r[x,y]=\Finp$ and this implies the claim. Now assume that there are  $x',y'\in\Finp$ such that $r[x',y']\subseteq \Finp\setminus X $ and  $|[x,y]|\leq |[x',y']|$; without loss of generality suppose that $x'=y_1+1$ and $y'=x_2-1$, and set $Y:=\bigcup_{i=2}^n(r[x_i,y_i]+r[x,x+1])$. Hence
\begin{align*}
|X+r[x,y]|&\geq|r[x_1+x,y_1+y]|+|Y|&\text{since }\\
&&r[x',y']\subseteq \Finp\setminus X\\
&=|r[x_1,y_1]|+|r[x,y]|-1+|Y|\\
&\geq |r[x_1,y_1]|+|[x,y]|-1+\sum_{i=2}^n(|r[x_i,y_i]|+1)\\
&=|X|+|[x,y]|+n-2.
\end{align*}
\end{proof}
This lower bound of $|X+r[x,y]|$ will be used in the following result.
\begin{lem}
\label{R12}
Let $x_1,y_1,x_2,y_2\in\Finp$ be such that 
\begin{equation*}
3\leq\min\{|[x_1,y_1]|,|[x_2,y_2]|\}\leq\max\{|[x_1,y_1]|,|[x_2,y_2]|\}\leq p-6.
\end{equation*}
If $r_1,r_2\in\Finp^*$ satisfy the inequality
\begin{equation}
\label{E54}
|r_1[x_1,y_1]+r_2[x_2,y_2]|\leq |[x_1,y_1]|+|[x_2,y_2]|+1,
\end{equation}
then  one of the following statements holds true
\begin{enumerate}
\item[(i)] $r_1\in\{\pm r_2\}$.
\item[(ii)] $r_2\in \{\pm 2r_1\}$ and $|[x_2,y_2]|=3$.
\item[(iii)] $r_1\in \{\pm 2r_2\}$ and $|[x_1,y_1]|=3$.
\end{enumerate}
Furthermore, if (\ref{E54}) is strict, then $r_1\in\{\pm r_2\}$. 
\end{lem}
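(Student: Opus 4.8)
The plan is to normalize and then split according to how far the sumset exceeds the Cauchy--Davenport bound. Multiplying the sumset by $r_1^{-1}$ (a bijection of $\Finp$) and translating each progression to start at $0$, I may assume $r_1=1$ and set $r:=r_1^{-1}r_2$, so that it suffices to study
\[
S:=[0,a-1]+\{0,r,2r,\ldots,(b-1)r\},
\]
where $a:=|[x_1,y_1]|$ and $b:=|[x_2,y_2]|$; the three conclusions then read $r=\pm1$; $r=\pm2$ with $b=3$; and $r=\pm\tfrac12$ with $a=3$. I work in the regime $a+b-1<p$ in which the hypothesis is genuinely restrictive (otherwise Theorem~\ref{R3} already forces $S=\Finp$), this being the case relevant to the applications of the lemma. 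Under this assumption Theorem~\ref{R3} gives $a+b-1\le|S|\le a+b+1$, so exactly one of $|S|=a+b-1$, $|S|=a+b$, $|S|=a+b+1$ holds.

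In the first case ($|S|=a+b-1$) Vosper's Theorem~\ref{R4} applies and produces a single $d\in\Finp$ with $[0,a-1]=d[u_1,v_1]$ and $r[0,b-1]=d[u_2,v_2]$. Since a progression of length at least $3$ determines its common difference up to sign (this is Corollary~\ref{R14} with coincident progressions when the length is at least $4$, and an immediate check when the length is $3$ and $p\ge11$), the first identity forces $d=\pm1$ and the second $d=\pm r$, so $r=\pm1$ (conclusion~(i)). In the second case ($|S|=a+b$) with $a+b\ge7$, Hamidoune--R\o dseth's Theorem~\ref{R5} gives a common $d$ with $[0,a-1]=d([u_1,v_1]\setminus\{z_1\})$ and $r[0,b-1]=d([u_2,v_2]\setminus\{z_2\})$. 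Now $[0,a-1]\subseteq d[u_1,v_1]$ is a progression of length $a$ inside one of length $a+1$, so Corollary~\ref{R14} (when $a\ge4$) yields $d=\pm1$; feeding this into the second identity and applying Corollary~\ref{R14} again (when $b\ge4$) gives $r=\pm1$. These two cases also settle the final assertion, since a strict inequality in~(\ref{E54}) means $|S|\le a+b$.

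The remaining configurations all involve a progression of length $3$, where Corollary~\ref{R14} is unavailable or $a+b<7$: here I would compare the two representations of the relevant three--term progression directly, using Proposition~\ref{R6} for the genuinely size--$3$ sumsets and Lemma~\ref{R8} to rule out the exceptional progressions coming from a root of $\lambda^4+\lambda^2+1$. The truly hard case is the third, $|S|=a+b+1$, for which none of the cited inverse theorems applies. Here I would argue geometrically: writing $S=\bigcup_{j=0}^{b-1}[jr,jr+a-1]$ as a union of $b$ cyclic arcs of length $a$ in steps of $r$, I analyse the overlaps of consecutive arcs. Representing $r$ by its least absolute residue, if $|r|\le a$ the arcs chain into a single interval of length $a+|r|(b-1)$, so $a+b+1=a+|r|(b-1)$ forces $|r|(b-1)=b+1$, whose only admissible solution with $a,b\ge3$ is $|r|=2$, $b=3$ (conclusion~(ii)); the symmetric analysis obtained by multiplying $S$ by $r^{-1}$ (which replaces $(a,b,r)$ by $(b,a,r^{-1})$) yields $r=\pm\tfrac12$, $a=3$ (conclusion~(iii)); and $|r|>a$ makes the arcs disjoint, giving $(a-1)(b-1)=2$, impossible for $a,b\ge3$.

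The main obstacle is precisely this excess--one case, and within it the wrap--around behaviour: when the arcs $[jr,jr+a-1]$ do not step monotonically but circle $\Finp$, the clean ``single interval'' picture breaks and I must instead exploit that $\{0,r,\ldots,(b-1)r\}$ is itself an arithmetic progression, using Lemma~\ref{R25} to bound $|r|$ relative to $p$ and so control how an $r$--progression can be swept out by a difference--one interval. Keeping track of the low--length exceptional configurations (through Proposition~\ref{R6} and Lemma~\ref{R8}) while carrying out this bookkeeping is the delicate part; once those are dispatched, the three conclusions follow.
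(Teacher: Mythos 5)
Your treatment of the two lower cases is sound and runs parallel to the paper's. For $|S|=a+b-1$ you use Vosper plus the rigidity of the common difference (Corollary~\ref{R14} and a finite check at length $3$), exactly as the paper does via Theorem~\ref{R4} and Proposition~\ref{R7}. For $|S|=a+b$ you invoke Hamidoune--R\o dseth where the paper instead uses Proposition~\ref{R23} to write $r_2[x_2,y_2]$ as a once-punctured $r_1$-progression and then shows by Proposition~\ref{R7} that this case is in fact vacuous; your weaker conclusion ``$r=\pm1$'' still suffices for both claims of the lemma, though note that Theorem~\ref{R5} needs $7\le a+b$, so the subcase $a=b=3$ genuinely falls outside it and your deferral to Proposition~\ref{R6} must be carried out (it can be: the translate alternative of Proposition~\ref{R6} forces $r=\pm1$ directly, and the exceptional shape $r([x,x+1]\cup\{x+3\})$ is never a three-term progression). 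Both you and the paper implicitly need $a+b$ small relative to $p$ (otherwise $|S|=p$ satisfies (\ref{E54}) with no constraint on $r_1,r_2$), so your standing assumption $a+b-1<p$ is not a defect relative to the source.

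The genuine gap is in the excess case $|S|=a+b+1$, which is the heart of the lemma. Your assertion that ``$|r|>a$ makes the arcs disjoint, giving $(a-1)(b-1)=2$'' is false in $\Zet/p\Zet$: taking $|r|>a$ only makes \emph{consecutive} arcs $[jr,jr+a-1]$ disjoint, while after a few steps the arcs wrap around and interleave. Indeed the configuration of conclusion~(iii) is itself a counterexample to that sentence: with $a=3$ and $r\equiv 2^{-1}=\overline{(p+1)/2}$, the least absolute residue of $r$ is about $p/2>a$, yet the $b$ arcs interleave into two runs and $|S|=a+b+1$ --- so your ``disjoint $\Rightarrow$ contradiction'' branch would wrongly exclude the very case the lemma must detect. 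You partially recover (iii) by the symmetric analysis with $(b,a,r^{-1})$, but then the residual case in which \emph{both} the least absolute residue of $r$ exceeds $a$ \emph{and} that of $r^{-1}$ exceeds $b$ is left with no argument: your final paragraph names Lemma~\ref{R25} as the tool for the wraparound bookkeeping but never executes the estimate. This residual case is exactly where the paper does its work: Proposition~\ref{R23} shows that if $r[0,b-1]$ decomposes into $n$ maximal unit intervals pairwise separated by at least two missing elements then $|S|\ge a+b+n-2$, forcing $n\le 3$, and Proposition~\ref{R7}(ii) (which rests on the counting in Lemma~\ref{R25}) then pins down $b=3$ and $r\in\{\pm2\}$, with the symmetric and $a=b=3$ configurations checked separately. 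Without that quantitative step, or an equivalent bound showing a step-$r$ progression with both $r$ and $r^{-1}$ ``large'' cannot be covered by three unit intervals, your proof does not close.
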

\begin{proof}
Write $S:=r_1[x_1,y_1]+r_2[x_2,y_2]$. By Theorem \ref{R3} we have to work only with 3 cases:
\begin{enumerate}
\item[]If $|S|=|[x_1,y_1]|+|[x_2,y_2]|-1$, then $r_1\in\pm\{r_2\}$ by Theorem \ref{R4} and Proposition \ref{R7}.

\item[]If $|S|=|[x_1,y_1]|+|[x_2,y_2]|$, then $r_2[x_2,y_2]=r_1[x_1^*,y_1^*]\cup r_1[x_1^{**},y_1^{**}]$ by Proposition \ref{R23}. Furthermore, since $|S|=|[x_1,y_1]|+|[x_2,y_2]|$, it can be seen  that $r_2[x_2,y_2]=r_1([x_1',y_1']\setminus\{z'\})$ for some $x_1',y_1'\in\Finp$ and $z'\in\big[x_1',y_1'\big]$; however, $r_2[x_2,y_2]$ cannot have this shape by Proposition \ref{R7}.

\item[]If $|S|=|[x_1,y_1]|+|[x_2,y_2]|+1$, then $r_2[x_2,y_2]=r_1[x_1^*,y_1^*]\cup r_1[x_1^{**},y_1^{**}]\cup r_1[x_1^{***},y_1^{***}]$ for some $x_1^*,y_1^*,x_1^{**},y_1^{**},x_1^{***},y_1^{***}\in\Finp$ by Proposition \ref{R23}. If $|[x_1,y_1]|>3$, then $r_2[x_2,y_2]=r_1[x_1',y_1']\setminus\{z,z'\}$ for some $x_1',y_1',z,z'\in\Finp$ with $z,z'\in[x_1'+1,y_1'-1]$ and $z\neq z'$. By Proposition \ref{R7} this means that $|r_2[x_2,y_2]|=3$ and $r_2\in\{\pm 2r_1\}$.  If $|[x_1,y_1]|=3$ and $|[x_2,y_2]|>3$, then we proceed as above. Hence it remains the case $|[x_1,y_1]|=|[x_2,y_2]|=3$; under this assumption, $r_2[x_2,y_2]$ is as above or  $r_2[x_2,y_2]=r_1[x_1',y_1']\cup r_1[x_1'',y_1'']$ with $r_1[x_1',y_1']\cap r_1[x_1''-2,y_1''+2]=\emptyset$ and it is straightforward to check that $r_1,r_2$ are as in (ii) or (iii). 
\end{enumerate}
The second claim is proven above.
\end{proof}
\begin{lem}
\label{R22}
Assume that $p\geq 5$. Let  $X$ be a subset of $\Finp$ with $|X|=3$ and $w\in \Finp$  such that  \begin{equation}
\label{E55}
X+X+w=-(X+X+w).
\end{equation}
Then there is $r\in\Finp$ such that $X=r[-1,1]-2^{-1}w$. In particular, $|X+X+w|=|X|+|X|-1$.
\end{lem}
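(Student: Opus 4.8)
The plan is to remove the translation by $w$ and reduce everything to a statement about a single $3$-element set whose sumset is symmetric about the origin. Since $p$ is odd, $2^{-1}$ exists; I would set $Y:=X+2^{-1}w$, so that $Y+Y=X+X+w$ and $-(Y+Y)=-(X+X+w)$. Then (\ref{E55}) is equivalent to $Y+Y=-(Y+Y)$, i.e.\ to $Y+Y$ being symmetric about $0$, while the conclusion $X=r[-1,1]-2^{-1}w$ is equivalent to $Y=r[-1,1]=\{-r,0,r\}$ and the final assertion to $|Y+Y|=5$. So it suffices to prove: \emph{if $|Y|=3$ and $Y+Y=-(Y+Y)$, then $Y=\{-r,0,r\}$ for some $r$ (and hence $|Y+Y|=5$).}

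First I would bound $|Y+Y|$. Writing $Y=\{\alpha,\beta,\gamma\}$, its sumset consists of the at most six values $2\alpha,2\beta,2\gamma,\alpha+\beta,\alpha+\gamma,\beta+\gamma$, so $|Y+Y|\le 6$, while Theorem \ref{R3} gives $|Y+Y|\ge\min\{p,5\}=5$. The heart of the argument is to rule out $|Y+Y|=6$ by a moment computation, which I expect to be the key step. Since $Y+Y$ is symmetric, $\sum_{s\in Y+Y}s^k=0$ for every odd $k$ (each $s$ cancels with $-s$, and $0$ is self-paired). If the six sums are distinct, summing them gives $4(\alpha+\beta+\gamma)=0$, so $\alpha+\beta+\gamma=0$; then $\alpha+\beta=-\gamma$ and its analogues show $Y+Y=2Y\cup(-Y)$ is a disjoint union of the six elements $2\alpha,2\beta,2\gamma,-\alpha,-\beta,-\gamma$. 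Summing their cubes gives $8(\alpha^3+\beta^3+\gamma^3)-(\alpha^3+\beta^3+\gamma^3)=7(\alpha^3+\beta^3+\gamma^3)=0$, so $\alpha^3+\beta^3+\gamma^3=0$; by Newton's identity $\alpha^3+\beta^3+\gamma^3=3\alpha\beta\gamma$ (using $\alpha+\beta+\gamma=0$), hence $\alpha\beta\gamma=0$. Thus some element of $Y$ is $0$, and then $\alpha+\beta+\gamma=0$ forces $Y=\{-\alpha,0,\alpha\}$, which has $|Y+Y|=5$, contradicting $|Y+Y|=6$.

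Therefore $|Y+Y|=5<p$, and Theorem \ref{R4} applied with both summands equal to $Y$ (or, more elementarily, the observation that the unique coincidence among the six sums must read, up to relabelling, $2\alpha=\beta+\gamma$) shows $Y$ is a $3$-term arithmetic progression, say $Y=\{a-d,a,a+d\}$ with $d\neq0$. Then $Y+Y=\{2a+kd:\,-2\le k\le 2\}$ consists of $5$ distinct elements summing to $10a$; symmetry forces this sum to vanish, so $a=0$ and $Y=\{-d,0,d\}=d[-1,1]$. Unwinding the substitution gives $X=d[-1,1]-2^{-1}w$ and $|X+X+w|=|Y+Y|=5=|X|+|X|-1$, as required.

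The main obstacle is the behaviour at small primes, where the moment identities degenerate: the cube-sum step reads $7(\alpha^3+\beta^3+\gamma^3)=0$ and is vacuous precisely when $p=7$, while the Cauchy--Davenport/Vosper input requires $5<p$ and $5\le p-2$ to prevent $Y+Y$ from filling the whole field. One checks that the reduced claim can genuinely fail for these two primes (for instance $Y=\{0,1,2\}\subset\Finp$ with $p=5$, and $Y=\{1,2,4\}\subset\Finp$ with $p=7$, both of which have symmetric $Y+Y$ without being of the form $\{-r,0,r\}$), so the argument really needs $p\ge 11$. This is harmless in context, since $p\ge 11$ holds throughout the paper; the small primes are exactly where I would expect to have to do careful, case-by-case checking.
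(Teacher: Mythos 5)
Your proof is correct, and after the shared first move (passing to $Y=X+2^{-1}w$, which is also how the paper's proof of Lemma \ref{R22} begins) it takes a genuinely different route. The paper deduces $x'+y'+z'=0$ from $\sum_{w'\in X'+X'}w'=0$ and then finishes by a direct case analysis of which element of $X'+X'=\{2x',2y',x'+y',-y',-x',-2x'-2y'\}$ can equal $-2x'$. Note that the paper's first deduction is only immediate when the six pairwise sums are distinct (the set-sum is $4(x'+y'+z')$ corrected by the repeated values), and its closing case analysis silently discards the family $X'=x'\{1,2,-3\}$, which does satisfy the symmetry hypothesis exactly when $p=7$. Your argument routes around both soft spots: Cauchy--Davenport (Theorem \ref{R3}) pins $|Y+Y|\in\{5,6\}$; in the six-element case the first- and third-moment identities together with Newton's identity $p_3=3e_3$ (valid since $e_1=0$) force $0\in Y$ and $Y=\{-\alpha,0,\alpha\}$, contradicting $|Y+Y|=6$; in the five-element case the unique coincidence $2\alpha=\beta+\gamma$ yields an arithmetic progression, and the vanishing sum $10a=0$ centres it at $0$, with no need to invoke Theorem \ref{R4}. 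What your approach buys is transparency about the degenerate primes: $7p_3=0$ is vacuous at $p=7$ and $10a=0$ is vacuous at $p=5$, and your counterexamples are genuine --- $X=\{0,1,2\}\subseteq\mathbb{F}_5$ and $X=\{1,2,4\}\subseteq\mathbb{F}_7$ with $w=0$ satisfy the hypothesis $X+X+w=-(X+X+w)$ but are not of the form $r[-1,1]-2^{-1}w$, so the lemma as stated with ``$p\geq 5$'' is in fact false at $p\in\{5,7\}$, a defect of the paper's statement and proof rather than of your argument. As you observe, this is harmless downstream: Lemma \ref{R22} is invoked only in the proof of Lemma \ref{R9}, where the hypothesis $|X_1|+|X_2|=6\leq p-5$ forces $p\geq 11$, and there your proof applies in full.
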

\begin{proof}
Write $X':=X+2^{-1}w=\{x',y',z'\}$ so (\ref{E55}) is equivalent to 
\begin{equation*}
X'+X'=-(X'+X');
\end{equation*}
in particular 
\begin{equation*}
\sum_{w'\in X'+X'}w'=\sum_{w'\in X'+X'}-w'=\sum_{-w'\in X'+X'}w'
\end{equation*}
so $x'+y'+z'=0$ and $X'+X'=\{2x',2y',x'+y',-y',-x',-2x'-2y'\}$. Since $2x'\in -(X'+X')$, we conclude that $X'=r[-1,1]$ for some $r\in\Finp$ analyzing all the possible values of $2x'$.
\end{proof}
\section{Case $n=2$}
\begin{lem}
\label{R9}
Let $X_1,X_2\subseteq \Finp$ be disjoint subsets such that $|X_1|=|X_2|=3$ and $|X_1|+|X_2|\leq p-5$. If $a_1, a_2$ are elements in $\Finp^*$ such that $a_1\neq a_2$
\begin{equation*}
|a_1X_1+a_2X_2\cup a_1X_2+a_2X_1|\leq |X_1|+|X_2|,
\end{equation*} then $a_1=-a_2$ and there exist $x,y,c,r,z\in\Finp$ such that
\begin{equation*}
\{X_1,X_2\}=\big\{r[x,y],r([y+c,x-c]\setminus\{z\})\big\}. 
\end{equation*}
\end{lem}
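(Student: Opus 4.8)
The plan is to write $A:=a_1X_1+a_2X_2$ and $B:=a_2X_1+a_1X_2$, so the hypothesis becomes $|A\cup B|\le 6$. Since $6=\sum_{i}|X_i|\le p-5$ forces $p\ge 11$, Theorem \ref{R3} gives $|A|,|B|\ge 5$, hence $|A|,|B|\in\{5,6\}$ and $|A\cap B|\ge |A|+|B|-6\ge 4$. The engine is Proposition \ref{R6}, which applies to each of the pairs $(a_1X_1,a_2X_2)$ and $(a_2X_1,a_1X_2)$ because each has a sumset of size at most $6$: for each pair I obtain the dichotomy that the two summands are translates of one another, or that they are $\{s[u,u+2],\,s([v,v+1]\cup\{v+3\})\}$ up to a common dilation $s$. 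When a sumset has size exactly $5$ I would instead invoke Theorem \ref{R4}, which upgrades the translate alternative to the statement that both summands are genuine $3$-term progressions with a common difference.

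I would first establish the two conclusions $a_1=-a_2$ and that $X_1,X_2$ share a common difference. Outside the case where both pairs fall in the translate alternative, each summand of $A$ (resp.\ of $B$) is a progression, genuine or defective, carrying a common underlying difference $s$; writing $r_1,r_2$ for the underlying differences of $X_1,X_2$, this yields $a_1r_1=\pm a_2r_2$ from $A$ and $a_2r_1=\pm a_1r_2$ from $B$. Multiplying and dividing these relations gives $(a_1/a_2)^2=\pm1$. The value $+1$ gives $a_1=\pm a_2$, hence $a_1=-a_2$ since $a_1\ne a_2$, and simultaneously $r_1=\pm r_2$, the common difference sought. The value $-1$ must be excluded: there $A$ and $B$ are honest $5$-term progressions whose common differences $a_1r_1$ and $a_2r_1$ differ by the factor $a_1/a_2$ with $(a_1/a_2)^2=-1$, so they are not equal up to sign; but two $5$-term progressions with common differences unequal up to sign meet in at most $3$ points (four shared points would determine a single common difference), whence $|A\cup B|\ge 7$, a contradiction.

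The remaining danger is the sub-case in which \emph{both} dichotomies are the translate alternative, $a_1X_1=a_2X_2+z_1$ and $a_2X_1=a_1X_2+z_2$, where the summands need not be progressions. Composing these two affine identities shows $X_2$ is invariant under an affine map whose linear part is $(a_1/a_2)^{\pm2}$; as $|X_2|=3$, this linear part is a root of unity of order $1$, $2$ or $3$. Order $1$ returns $a_1=-a_2$; order $2$ gives $(a_1/a_2)^2=-1$, excluded as above; and order $3$ forces $\lambda:=a_1/a_2$ to satisfy $\lambda^4+\lambda^2+1=0$ and makes $X_2$ an explicit multiplicative orbit, so that $A$ becomes a concrete $6$-element set of the type appearing in Lemma \ref{R8}, which I would invoke to contradict $|A\cup B|\le 6$. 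This sporadic root-of-unity analysis, together with the $(a_1/a_2)^2=-1$ exclusion, is the main obstacle of the argument.

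Once $a_1=-a_2=:a$ is in hand, $B=-A$ and the hypothesis collapses to $|A\cup(-A)|\le 6$ with $A=a(X_1-X_2)$ and $|A|\ge 5$. It remains to read off the joint normal form. Applying Proposition \ref{R6} (or Theorem \ref{R4} when $|A|=5$) to $A=aX_1+(-a)X_2$ identifies $A$, up to dilation and translation, as either a $5$-term progression or one of the length-$6$ defective progressions, and the near-symmetry $|A\cap(-A)|\ge 4$ pins down the position of $A$ relative to $-A$. Transporting this back through $A=a(X_1-X_2)$ and using $X_1\cap X_2=\emptyset$, I would conclude that one of $X_1,X_2$ is a progression $r[x,y]$ and the other is $r([y+c,x-c]\setminus\{z\})$, with $z$ an endpoint of $[y+c,x-c]$ exactly when $|A|=5$ and $z$ interior when $|A|=6$. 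The bookkeeping that converts the description of the difference set $A$ into the precise endpoints $y+c,\,x-c$ and the removed point $z$ is the most delicate computational part of this last step.
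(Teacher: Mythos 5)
Your outline reproduces the paper's strategy almost step for step --- Proposition \ref{R6} applied to both pairs, Proposition \ref{R7} as the rigidity excluding $(a_1/a_2)^2=-1$, the affine-orbit trichotomy in the double-translate case, Lemma \ref{R8} in the cube-root branch --- but it has one genuine hole: the double-translate branch with linear part of order one, i.e.\ $a_1X_1=a_2X_2+w_1$ and $a_2X_1=a_1X_2+w_2$ with $a_1=-a_2$. You dispatch this with ``order $1$ returns $a_1=-a_2$'' and route it into your final paragraph, but there your shape identification fails: in this branch Proposition \ref{R6} applied to the pair $(a_1X_1,a_2X_2)$ only returns the translate alternative, which carries no progression structure for $A$ at all; instead $A=a_2(X_2+X_2)+w_1$, the double of an arbitrary $3$-set (e.g.\ $X_2=\{0,1,5\}$ gives a dilate-translate of $\{0,1,2,5,6,10\}$, far from a progression for large $p$), and $B=-A$. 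When $|A|=6$ the hypothesis forces $A=-A$, i.e.\ $X_2+X_2+w=-(X_2+X_2+w)$, and refuting this requires exactly Lemma \ref{R22}: a $3$-set with symmetric double is $r[-1,1]$ up to translation, so its double has $5$ elements, contradicting $|A|=6$. Your proposal never invokes Lemma \ref{R22} nor supplies a substitute (when $|A|=5$ Theorem \ref{R4} rescues you, since Vosper makes both summands genuine progressions, but the $|A|=6$ sub-branch is left with no argument), and this is precisely the point where the paper leans on that lemma.

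Two secondary, repairable soft spots. First, your exclusion of $(a_1/a_2)^2=-1$ rests on ``four shared points determine a single common difference''; for two $5$-term progressions a $4$-point intersection is of the form $r([x,y]\setminus\{z\})$ with respect to both differences and Proposition \ref{R7}(i) does the job, but in the defective alternative $A$ or $B$ is a full $6$-term progression, and a $4$-subset of a $6$-progression need not have that form (translates of $\{0,1,4,5\}$ do not); there you should first observe that $|A\cup B|\le 6$ forces $B\subseteq A$ or $A=B$, giving at least five shared points, before invoking the rigidity. Second, your second paragraph tacitly assumes that if one pair escapes the translate alternative then both do; this is true, but only after noting that $r\{v,v+1,v+3\}$ is not an arithmetic progression for $p>5$, so no translate identity can hold between an honest $3$-progression and a defective one. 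On the credit side, your closing observation that $z$ is an endpoint of $[y+c,x-c]$ exactly when $|A|=5$ is correct, and in fact more careful than the paper's own first case, which dismisses $|A|=5$ as contradictory and overlooks the half-shift configurations (two $3$-progressions with common difference, offset by $2^{-1}$) that genuinely satisfy the hypothesis and the conclusion.
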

\begin{proof}
Write $S:=a_1X_1+a_2X_2\cup a_1X_2+a_2X_1$. If $|a_1X_1+a_2X_2|=|X_1|+|X_2|-1$, then $a_1X_1=r[x_1,x_1+2]$ and $a_2X_2=r[x_2,x_2+2]$ for some $r,x_1,x_2\in\Finp$ by Theorem \ref{R4}. Thus  $a_2X_1=a_2a_1^{-1}r[x_1,x_1+2]$ and $a_1X_2=a_1a_2^{-1}r[x_2,x_2+2]$. On the other hand, the inequality 
\begin{equation*}
|a_1X_2+a_2X_1|\leq |X_1|+|X_2|
\end{equation*}
implies that $a_2a_1^{-1}\in\{\pm a_1a_2^{-1}\}$ by Lemma \ref{R12}. Hence
\begin{align}
S&\in\{r[x_1+x_2,x_1+x_2+4]\cup a_2a_1^{-1}r[x_1+x_2,x_1+x_2+4],\nonumber\\
&\qquad r[x_1+x_2,x_1+x_2+4]\cup a_2a_1^{-1}r[x_1-x_2-2,x_1-x_2+2]\}.\label{E49}
\end{align}
Since $|S|\leq |X_1|+|X_2|$, there are $z'\in a_1X_1+a_2X_2$ and $z''\in a_2X_1+a_1X_2$ such that
 \begin{equation*}
  (a_1X_1+a_2X_2)\setminus\{z'\}=(a_2X_1+a_1X_2)\setminus\{z''\}.
 \end{equation*}
Then (\ref{E49}) implies that  $a_2a_1^{-1}=-1$ by Proposition \ref{R7}. The equality $a_1=-a_2$ and (\ref{E49}) yield  that $x_1+x_2=-2$ and consequently  $X_1\cap X_2\neq\emptyset$. If $|a_1X_2+a_2X_1|=|X_1|+|X_2|-1$, then we proceed as above so from now on we assume that
\begin{equation}
\label{E56}
|a_1X_1+a_2X_2|=|a_1X_2+a_2X_1|=|X_1|+|X_2|.
\end{equation}
By Proposition \ref{R6} we have the following cases:
\begin{enumerate}
\item[]Either there is not $w\in\Finp$ such that $a_1X_1=a_2X_2+w$ or there is not $w\in\Finp$ such that $a_2X_1=a_1X_2+w$. Assume without loss of generality that there is not $w\in\Finp$ such that $a_1X_1=a_2X_2+w$; from Proposition \ref{R6} we may assume that $a_1X_1=r[x_1,x_1+2]$ and $a_2X_2=r([x_2,x_2+1]\cup\{x_2+3\})$ for some $r,x_1,x_2\in\Finp$ (the other cases are solved in the same way). Hence $a_2X_1=a_2a_1^{-1}r[x_1,x_1+2]$, $a_1X_2=a_1a_2^{-1}r([x_2,x_2+1]\cup\{x_2+3\})$,
 and therefore (\ref{E56}) tell us that  $a_2a_1^{-1}\in\{\pm a_1a_2^{-1}\}$  by Proposition \ref{R6} and Proposition \ref{R7}. Consequently 
\begin{align}
S&\in\{r[x_1+x_2,x_1+x_2+5]\cup a_2a_1^{-1}r[x_1+x_2,x_1+x_2+5],\nonumber\\
&\qquad r[x_1+x_2,x_1+x_2+5]\cup a_2a_1^{-1}r[x_1-x_2-3,x_1-x_2+2]\},\label{E50}
\end{align}
and, since $|S|\leq |X_1|+|X_2|$, we have that
\begin{equation*}
r[x_1+x_2,x_1+x_2+5]= a_2a_1^{-1}r[x_1+x_2,x_1+x_2+5]
\end{equation*}
or 
\begin{equation*}
r[x_1+x_2,x_1+x_2+5]= a_2a_1^{-1}r[x_1-x_2-3,x_1-x_2+2];
\end{equation*}
 in any case $a_2a_1^{-1}=-1$ by Proposition \ref{R7}. The equality $a_1=-a_2$ and (\ref{E50}) yield the solution.
  
\item[]Now we proceed in the case where there are $w_1,w_2\in\Finp$ such that $a_1X_1=a_2X_2+w_1$ and $a_2X_1=a_1X_2+w_2$. We have
\begin{equation*}
a_2^2a_1^{-1}X_2+a_2a_1^{-1}w_1=a_2X_1=a_1X_2+w_2
\end{equation*}
so 
\begin{equation*}
a_2^2a_1^{-2}X_2+a_2a_1^{-2}w_1-a_1^{-1}w_2=X_2.
\end{equation*}
Set $\lambda:=a_2a_1^{-1}$ and $\mu:=a_2a_1^{-2}w_1-a_1^{-1}w_2$ so $X_2=\lambda^2X_2+\mu$. If $\lambda=-1$, then $\mu=0$ and thereby $a_1+a_2=w_1+w_2=0$; however, this is impossible since (\ref{E56}) would contradict Lemma \ref{R22}. From now on suppose that  $\lambda^2\neq 1$. If $\lambda^2=-1$, then  $X_2$ is an arithmetic progression since $|X_2|=3$; consequently $a_1X_2$ and $a_2X_1$ are arithmetic progressions with the same difference and hence 
\begin{equation*}
|a_1X_2+a_2X_1|=|X_1|+|X_2|-1
\end{equation*}
contradicting (\ref{E56}). From now on suppose that $\lambda^2\neq -1$ and write $X_2:=\{x_2,y_2,z_2\}$. If $\lambda^2 w+\mu=w$ for some $w\in X_2$, then for all $w'\in X_2\setminus\{w\}$
\begin{equation}
\label{E57}
\lambda^2w'+\mu\neq w';
\end{equation}
however, since $|X_2|=3$, we get that
\begin{equation}
\label{E58}
\lambda^4w'+(\lambda^2+1)\mu=w'.
\end{equation}
Thus (\ref{E58}) implies  that  (\ref{E57}) is false insomuch as $\lambda^2+1\neq 0$.  Then without loss of generality $y_2=\lambda^2 x_2+\mu$, $z_2=\lambda^2 y_2+\mu$ and $x_2=\lambda^2 z_2+\mu$; particularly $1+\lambda^2+\lambda^4=0$. From (\ref{E56}) we see that $a_1X_1+a_2X_2=a_2X_1+a_1X_2$ so
\begin{equation}
\label{E16}
a_2a_1^{-1}X_2+a_2a_1^{-1}X_2=X_2+X_2-(w_1-w_2)a_1^{-1}.
\end{equation}
Adding $-2x_2\lambda$  and multiplying (\ref{E16}) by $\theta:=((\lambda^2-1)x_2+\mu)^{-1}$ ,  we obtain that
\begin{align}
\label{E17}
\lambda \{0,1,2,\lambda^2+1,\lambda^2+2,2\lambda^2+2\}&=\{0,1,2,\lambda^2+1,\lambda^2+2,2\lambda^2+2\}\nonumber\\
&\quad +((w_2-w_1)a_1^{-1}+2x_2(1-\lambda))\theta.
\end{align}
By Lemma \ref{R8} we conclude that
\begin{equation}
\label{E51}
\lambda^3=1\qquad \text{and}\qquad 2\lambda=((w_2-w_1)a_1^{-1}+2x_2(1-\lambda))\theta
\end{equation}
or 
\begin{equation}
\label{E52}
\lambda^3=-1\qquad \text{and}\qquad 2\lambda-2=((w_2-w_1)a_1^{-1}+2x_2(1-\lambda))\theta.
\end{equation}
If (\ref{E51}) is true, then $2\lambda\mu=(w_2-w_1)a_1^{-1}$ and thereby 
\begin{equation}
\label{E53}
w_2(1+2\lambda)=w_1(1+2\lambda^2); 
\end{equation}
on the other hand 
\begin{equation*}
X_2=\lambda^2X_2+\mu=\lambda^4 X_2+(\lambda^2+1)\mu=\lambda X_2+(\lambda^2+1)\mu
\end{equation*}
and by assumption $X_1=\lambda X_2+a_1^{-1}w_1$; however, we get from (\ref{E53}) that
$(\lambda^2+1)\mu=a_1^{-1}w_1$ contradicting the disjointedness of $X_1$ and $X_2$. If (\ref{E52}) is true, then (\ref{E17}) implies that $
\{0,\lambda,\lambda-1\}=\{3\lambda-2,3\lambda-1,4\lambda-2\}$
which is impossible.
\end{enumerate}
\end{proof}
\begin{lem}
\label{R10}
Let $X_1,X_2\subseteq \Finp$ be disjoint subsets such that $\min\{|X_1|,|X_2|\}\geq 3$. If 
\begin{equation*}
|X_1-X_2\cup X_2-X_1|\leq |X_1|+|X_2|\leq p-4,
\end{equation*}
then  $\{X_1,X_2\}=\big\{r[x,y],r([y+c,x-c]\setminus\{z\})\big\}$ for some $x,y,c,r,z\in\Finp$.
\end{lem}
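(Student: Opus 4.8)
The plan is to study the ordinary sumset $X_1+(-X_2)$ and to exploit that the hypothesis bounds the size of the \emph{symmetric} set it generates. Put $D:=X_1-X_2=X_1+(-X_2)$, $N:=|X_1|+|X_2|$, and $S:=(X_1-X_2)\cup(X_2-X_1)$; since $X_2-X_1=-D$ we have $S=D\cup(-D)$. As $N\le p-4<p$, Theorem \ref{R3} gives $|D|\ge N-1$, while $D\subseteq S$ and the hypothesis give $|D|\le|S|\le N$, so $|D|\in\{N-1,N\}$. If $|X_1|=|X_2|=3$ then $N=6$, forcing $p\ge 11$ and hence $N\le p-5$; since $1\ne-1$ in $\Finp$, Lemma \ref{R9} applied with $a_1=1,a_2=-1$ gives the conclusion at once. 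Thus I may assume $N\ge 7$.

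First I would treat $|D|=N-1$. Here $|X_1+(-X_2)|=|X_1|+|X_2|-1\le p-5\le p-2$, so Theorem \ref{R4} applies: $X_1=r[x,y]$ and $-X_2=r[x_2,y_2]$ for a common $r$, and via $r[a,b]=(-r)[-b,-a]$ this becomes $X_2=r[u,v]$. Thus $X_1,X_2$ are honest arcs with common difference $r$, and $D=r[x-v,y-u]$, $-D=r[u-y,v-x]$ are two arcs of length $N-1$ that are reflections of one another. A direct inspection of these two arcs shows that $|D\cup(-D)|\le N$ is equivalent to $e:=(x+y)-(u+v)\in\{-1,0,1\}$, i.e.\ the two gaps separating $[x,y]$ from $[u,v]$ differ in length by at most one. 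Taking $c:=\min\{u-y,\,x-v\}$, one checks that $[y+c,x-c]$ equals $[u,v]$ when $e=0$ and equals $[u,v]$ with exactly one extra endpoint when $e=\pm1$; choosing $z$ outside the arc in the first case and equal to that extra endpoint in the second yields $X_2=r([y+c,x-c]\setminus\{z\})$, which is the required form.

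Next I would treat $|D|=N$. Then $D\subseteq S$ and $|S|\le N=|D|$ force $S=D$, so $D=-D$ is symmetric. Since $7\le N\le p-4$, Theorem \ref{R5} applies to $X_1+(-X_2)$ and, after the same rewriting, gives $X_1=r([x,y]\setminus\{z_1\})$ and $X_2=r([u,v]\setminus\{w\})$ with common difference $r$. The key point is to show one of these is hole-free. Deleting an interior point of an arc does not change its difference set with another arc of size $\ge 2$, so if both $z_1,w$ were interior then $D$ would be the arc $r([x,y]-[u,v])$ of length $N+1$ with the single interior point $r(z_1-w)$ deleted; as $|D|=N\le p-4$ its minimal covering arc, and hence its deleted point, are determined by $D$ alone. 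The same description of $-D$ has deleted point $-r(z_1-w)$, so $D=-D$ forces $r(z_1-w)=-r(z_1-w)$, i.e.\ $z_1=w$. But then $z_1\pm1$ lie in both $[x,y]$ and $[u,v]$ (interior holes), and since $X_1,X_2$ delete only the common point $z_1=w$, the point $r(z_1+1)$ lies in $X_1\cap X_2$, contradicting disjointness. Hence at least one hole is non-interior; interchanging $X_1,X_2$ if needed, I may assume $X_1=r[x,y]$ is an honest arc and $X_2=r([u,v]\setminus\{w\})$ (with $w$ genuinely interior, since two honest arcs would give $|D|=N-1$). As before the interior hole does not affect the difference set, so $D=r[x-v,y-u]$, and $D=-D$ gives $x+y=u+v$; then $c:=u-y=x-v$ makes $[y+c,x-c]=[u,v]$, so $X_2=r([y+c,x-c]\setminus\{w\})$ is the desired form with $z:=w$.

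The step I expect to be the main obstacle is the argument in the case $|D|=N$ that the two arcs-with-holes produced by Hamidoune--R\o dseth cannot both carry an interior hole: this is exactly what reduces the two ``defects'' to the single hole permitted by the statement, and it is where the symmetry $D=-D$ must be played against the disjointness of $X_1$ and $X_2$. By contrast, the overlap inspection for the two reflected arcs in the case $|D|=N-1$, and the bookkeeping converting the centre and gap data into the explicit parameters $x,y,c,r,z$, are routine once the common difference $r$ has been extracted.
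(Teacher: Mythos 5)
Your overall architecture is sound and essentially parallels the paper's: you dispose of the case $|X_1|=|X_2|=3$ via Lemma \ref{R9} with $(a_1,a_2)=(1,-1)$ (noting $6\le p-4$ forces $p\ge 11$, so the $p-5$ hypothesis of that lemma holds), split according to $|D|\in\{N-1,N\}$, and use Theorem \ref{R4} in the first case and Theorem \ref{R5} in the second. The paper applies Theorem \ref{R5} separately to $X_1-X_2$ and $X_2-X_1$ and reconciles the two common differences through Proposition \ref{R7}, whereas you exploit $X_2-X_1=-(X_1-X_2)$ so that a single application plus the symmetry $D=-D$ suffices; that is a genuine tidying. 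Your $|D|=N-1$ analysis, which merges the paper's subcases $|S|=N-1$ and $|S|=N$ with $|D|=N-1$ into one computation with the shift parameter $e\in\{-1,0,1\}$, is correct.

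However, the step you yourself single out as the crux contains a genuine error. With $X_1=r([x,y]\setminus\{z_1\})$, $X_2=r([u,v]\setminus\{w\})$ and both holes interior, it is false that $D$ equals the arc $r([x,y]-[u,v])$ with the point $r(z_1-w)$ deleted: the pair $(z_1+1,\,w+1)$ avoids both holes precisely because they are interior, and it represents $z_1-w$, so $r(z_1-w)\in D$ always. What actually happens is this: for $t=x-v+j$ the fibre of representations of $t$ has length $\min\{j+1,\,|X_1|+1,\,|X_2|+1,\,N+1-j\}$, and removing $z_1$ and $w$ kills at most two pairs, so $D$ is the full arc $r[x-v,y-u]$ of size $N+1$ --- contradicting $|D|=N$ --- unless the holes occupy one of the two special positions $z_1=x+1,\ w=v-1$ (missing point $r(x-v+1)$) or $z_1=y-1,\ w=u+1$ (missing point $r(y-u-1)$). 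Consequently your deduction that $D=-D$ forces $z_1=w$, and the ensuing contradiction via $r(z_1+1)\in X_1\cap X_2$, rest on a false identity. The step is repairable along the same lines: in the special positions the deleted point is adjacent to an end of the covering arc, so $D$ decomposes uniquely (since $N\le p-4$) as an isolated point together with a maximal arc of length $N-1$; then $D=-D$ forces the isolated point to be its own negative, giving $x=v$ (respectively $u=y$), so the arcs $[x,y]$ and $[u,v]$ share an endpoint which differs from both holes and hence lies in $X_1\cap X_2$, contradicting disjointness. With that substitution your remaining argument --- one set an honest arc, the interior hole invisible to the difference set, $D=-D$ giving $x+y=u+v$ and then $c:=u-y=x-v$, $z:=w$ --- goes through.
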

\begin{proof}
Write $S:=X_1-X_2\cup X_2-X_1$. By Lemma \ref{R9} we may assume that $\max\{|X_1|,|X_2|\}>3$ from now on. By Theorem \ref{R3}
\begin{equation*}
|X_1|+|X_2|-1\leq\min\{|X_1-X_2|, |X_2-X_1|\}
\end{equation*}
so $|X_1|+|X_2|-1\leq|S|$. If  $|X_1|+|X_2|-1=|S|$, then $S=X_1-X_2=X_2-X_1$. In the case where $|X_1|+|X_2|-1=|X_1-X_2|$ Theorem \ref{R4} establishes that there exist $x_1,x_2,y_1,y_2,r\in\Finp$ such that $X_1=r[x_1,y_1]$ and $X_2=r[x_2,y_2]$.  Hence $X_1-X_2=X_2-X_1$ if and only if there is $c\in \Finp$ such that $X_1=r[x_1,y_1]$ and  $X_2=r[y_1+c,x_1-c]$. We suppose that $|X_1|+|X_2|=|S|$ from now on and we have to study two cases:
\begin{enumerate}
\item[] Assume that $|X_1|+|X_2|-1\not\in\{|X_1-X_2|,|X_2-X_1|\}$. We know that $7\leq |X_1|+|X_2|$.  On one hand $|X_1-X_2|=|X_1|+|X_2|$, 
and by Theorem \ref{R5} there are $x_1,x_2,y_1,y_2,r\in \Finp$ such that
$X_1\subseteq r[x_1,y_1]$ and $X_2\subseteq r[x_2,y_2]$  with $|X_1|+1=|[x_1,y_1]|$ and $|X_2|+1=|[x_2,y_2]|$. On the other hand  $|X_2-X_1|=|X_1|+|X_2|$,
and by Theorem \ref{R5} there are $x'_1,x'_2,y'_1,y'_2,r'\in \Finp$ such that
$X_1\subseteq r'[x'_1,y'_1]$ and $ X_2\subseteq r'[x'_2,y'_2]$ with $|X_1|+1=|[x'_1,y'_1]|$ and $|X_2|+1=|[x'_2,y'_2]|$. By Proposition \ref{R7} $r\in\{\pm r'\}$; assume without loss of generality that $r=r'$. Hence $|(X_1-X_2)\cup (X_2-X_1)|=|X_1|+|X_2|$ if and only if there  are $x,y,c,z\in\Finp$ with $z\in[y+c,x-c]$ such that $\{X_1,X_2\}=\{r[x,y],r([y+c,x-c]\setminus\{z\})\}$.

\item[] For the remaining case,  assume without loss of generality that $|X_1|+|X_2|-1=|X_1-X_2|$. By Theorem \ref{R4} there are $x_1,x_2,y_1,y_2,r\in \Finp$ such that $X_1=r[x_1,y_1]$ and $ X_2=r[x_2,y_2]$. Inasmuch as $|S|=|X_1|+|X_2|$, we conclude that $X_1-X_2\neq X_2-X_1$, and furthermore
\begin{equation}
\label{E3}
|X_1-X_2\cap X_2-X_1|=|X_1|+|X_2|-1.
\end{equation}
Finally, (\ref{E3}) let us state that there are $x,y,c\in\Finp$ such that 
\begin{equation*}
\{X_1,X_2\}=\{r[x,y],r[y+c,x-c-1]\}.
\end{equation*}
\end{enumerate}
\end{proof}

\begin{lem}
\label{R11}
Let $X_1$ and $X_2$ be disjoint subsets of $\Finp$ such that $\min\{|X_1|,|X_2|\}\geq 3$ and $a_1,a_2\in\Finp^*$ with $a_1\not\in\{\pm a_2\}$. If $|X_1|+|X_2|\leq p-5$ , then 
\begin{equation*}
|a_1X_1+a_2X_2\cup a_1X_2+a_2X_1|>|X_1|+|X_2|.
\end{equation*} 
\end{lem}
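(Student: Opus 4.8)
The plan is to argue by contradiction, so I suppose that $|S|\leq |X_1|+|X_2|$ where $S:=a_1X_1+a_2X_2\cup a_1X_2+a_2X_1$, and I set $M:=|X_1|+|X_2|$, $A:=a_1X_1+a_2X_2$ and $B:=a_1X_2+a_2X_1$. Since $A,B\subseteq S$, Theorem \ref{R3} gives $M-1\leq |A|,|B|\leq |S|\leq M$, so each of $|A|,|B|$ equals $M-1$ or $M$. If $|X_1|=|X_2|=3$ then the hypotheses of Lemma \ref{R9} hold (note $a_1\neq a_2$), and its conclusion $a_1=-a_2$ contradicts $a_1\notin\{\pm a_2\}$; hence from now on I may assume $\max\{|X_1|,|X_2|\}>3$, and in particular $M\geq 7$.

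Next I would extract a common difference from each sumset. Applying Theorem \ref{R4} when $|A|=M-1$ and Theorem \ref{R5} when $|A|=M$ (the size conditions hold because $7\leq M\leq p-5$), I can write $a_1X_1=r([x_1,y_1]\setminus W_1)$ and $a_2X_2=r([x_2,y_2]\setminus W_2)$ with a common $r\in\Finp^*$ and $|W_i|\leq 1$; doing the same with $B$ yields a common difference $r^*$ for $a_1X_2$ and $a_2X_1$. Now $X_1$ is written as an interval minus at most one point with difference $a_1^{-1}r$ and also with difference $a_2^{-1}r^*$, so Proposition \ref{R7}(i) forces $a_1^{-1}r=\pm a_2^{-1}r^*$; the same argument applied to $X_2$ gives $a_2^{-1}r=\pm a_1^{-1}r^*$. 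Combining the two relations yields $a_1^2=\pm a_2^2$. If $a_1^2=a_2^2$ then $a_1\in\{\pm a_2\}$, a contradiction, so it remains to rule out $a_1^2=-a_2^2$.

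This last case is the main obstacle. Writing $\lambda:=a_1a_2^{-1}$, I have $\lambda^2=-1$, hence $\lambda\neq\pm 1$ and $\lambda^{-1}=-\lambda$, so $r^*=\pm\lambda r$. The relations above show $A$ is contained in an arithmetic progression of difference $r$ and $B$ in one of difference $\lambda r$, each of size at most $M+1$, while $|A|,|B|\in\{M-1,M\}$. Since $|A\cup B|=|S|\leq M$, the intersection satisfies $|A\cap B|\geq M-2$. I would then split into the cases $(|A|,|B|)\in\{M-1,M\}^2$: when both equal $M$ one gets $A=B$; when they differ, one set is contained in the other and Corollary \ref{R14} applies; and when both equal $M-1$ the set $A\cap B$ is an interval minus at most one point with respect to \emph{both} differences, so Proposition \ref{R7}(i) applies. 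In every case I obtain $r=\pm\lambda r$, i.e.\ $\lambda=\pm 1$, contradicting $\lambda^2=-1$ and completing the proof. The difficulty here is exactly that two genuinely different progression structures (differences $r$ and $\lambda r$) coexist, so the argument must exploit the forced near-total overlap of $A$ and $B$ to compare their common differences.
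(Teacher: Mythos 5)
Your proof is correct and follows essentially the same route as the paper's: Lemma \ref{R9} to dispose of the $|X_1|=|X_2|=3$ case, Cauchy--Davenport together with Theorems \ref{R4} and \ref{R5} to extract common differences $r$ and $r^*$ for the two sumsets, Proposition \ref{R7}(i) to relate the differences, and the forced near-coincidence of $A$ and $B$ together with Proposition \ref{R7} and Corollary \ref{R14} to reach the contradiction. Your intermediate reduction to $a_1^2=\pm a_2^2$ is a harmless detour (the paper skips it, since the overlap step --- which your three-case analysis spells out in more detail than the paper's terse citation of Proposition \ref{R7} and Corollary \ref{R14} --- already yields $r\in\{\pm a_2a_1^{-1}r\}$ and hence $a_1\in\{\pm a_2\}$ directly), but it does not change the substance of the argument.
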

\begin{proof}
 From Lemma \ref{R9} we may assume that $\max\{|X_1|,|X_2|\}>3$ from now on. Set $S:=a_1X_1+a_2X_2\cup a_1X_2+a_2X_1$. We assume that the lemma is false and we obtain a contradiction. By Theorem \ref{R3}
\begin{equation*}
 |X_1|+|X_2|-1\leq |a_1X_1+a_2X_2|,|a_2X_1+a_1X_2|\leq |X_1|+|X_2|.
\end{equation*}
Thus Theorem \ref{R4} and Theorem \ref{R5} imply the existence of $x_1,x_2,y_1,y_2,r\in\Finp$ such that $a_1X_1\subseteq r[x_1,y_1]$ and $a_2X_2\subseteq r[x_2,y_2]$ with $|[x_1,y_1]|=|X_1|+1$ and $|[x_2,y_2]|=|X_2|+1$. In the same way, there are $x'_1,x'_2,y'_1,y'_2,r'\in\Finp$ such that  $a_2X_1\subseteq r'[x'_1,y'_1]$ and $a_1X_2\subseteq r'[x'_2,y'_2]$ with $|[x'_1,y'_1]|=|X_1|+1$ and $|[x'_2,y'_2]|=|X_2|+1$. Since $a_2X_1=a_2a_1^{-1}(a_1X_1)\subseteq a_2a_1^{-1}r[x_1,y_1]$, we get that $a_2a_1^{-1}r\in \{\pm r'\}$ by Proposition \ref{R7}. Thus there are  $x_3,y_3,x'_3,y'_3\in\Finp$ such that $a_1X_1+a_2X_2\subseteq r[x_3,y_3]$ and $a_2X_1+a_1X_2\subseteq a_2a_1^{-1}r[x'_3,y'_3]$ with $|[x_3,y_3]|=|[x'_3,y'_3]|=|X_1|+|Y_1|$. Then Proposition \ref{R7} and Corollary \ref{R14} yield $r\in\{\pm a_2a_1^{-1}r\}$ contradicting the assumption $a_1\not\in\{\pm a_2\}$.
 \end{proof}
\section{Case $n=3$}
\begin{lem}
\label{R13}
Let $X_1,X_2,X_3$ be pairwise disjoint subsets of $\Finp$ such that \\$\min_{1\leq i\leq 3} |X_i|\geq 3$ and $\sum_{i=1}^3|X_i|\leq p-3$. Assume that $a_1,a_2,a_3\in\Finp^*$ are such that there exist $a_i,a_j$ with $a_i\not\in\{\pm a_j\}$. Then
\begin{equation*}
\Bigg|\bigcup_{\sigma\in \Per_3}\sum_{i=1}^3a_{\sigma(i)}X_i\Bigg|> \sum_{i=1}^3|X_i|.
\end{equation*}
\end{lem}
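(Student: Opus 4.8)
The plan is to reduce the three–variable statement to the two–variable Lemma \ref{R11}, using Cauchy--Davenport (Theorem \ref{R3}) to peel off one set and Vosper's theorem (Theorem \ref{R4}) to extract rigidity. First I would use that the quantity $U:=\bigcup_{\sigma\in\Per_3}\sum_{i=1}^3 a_{\sigma(i)}X_i$ is invariant under permuting the coefficients $a_1,a_2,a_3$ among themselves and under permuting the sets $X_1,X_2,X_3$ among themselves; relabelling, I may therefore assume $a_1\notin\{\pm a_2\}$, with $a_3$ the leftover coefficient. For each $k\in\{1,2,3\}$ put $\{i,j\}:=\{1,2,3\}\setminus\{k\}$ and $T_k:=(a_1X_i+a_2X_j)\cup(a_2X_i+a_1X_j)$. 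The two permutations placing $a_3$ on $X_k$ and $\{a_1,a_2\}$ on $\{X_i,X_j\}$ give $U\supseteq T_k+a_3X_k$. Since $|X_i|+|X_j|\le\sum_{t}|X_t|-3\le p-6\le p-5$ and $a_1\notin\{\pm a_2\}$, Lemma \ref{R11} yields $|T_k|\ge|X_i|+|X_j|+1$, and Cauchy--Davenport then gives
\begin{equation*}
|U|\ \ge\ |T_k+a_3X_k|\ \ge\ |T_k|+|X_k|-1\ \ge\ \sum_{t=1}^3|X_t|.
\end{equation*}

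Next I would suppose, for contradiction, that $|U|=\sum_{t}|X_t|$ (otherwise we are done). Then for every $k$ the displayed chain is forced to be a chain of equalities: $|T_k|=|X_i|+|X_j|+1$, the Cauchy--Davenport bound for $T_k+a_3X_k$ is tight, and $U=T_k+a_3X_k$. As $|T_k|\ge7$, $|X_k|\ge3$ and $|U|\le p-3\le p-2$, Vosper's theorem applies to $U=T_k+a_3X_k$ and shows that $T_k$ and $a_3X_k$ are arithmetic progressions with a common difference $r_k$; in particular $U$ is an arithmetic progression of difference $r_k$ and $X_k$ is an arithmetic progression of difference $a_3^{-1}r_k$. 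Since $U$ is one fixed progression of length $\ge9$, its difference is determined up to sign, so $r_1=\pm r_2=\pm r_3$, and hence all three $X_k$ are arithmetic progressions with a single common difference $d$. Rescaling by $d^{-1}$ (a bijection preserving every cardinality), I may assume each $X_k$ is an interval $I_k$ with $|I_k|\ge3$ and that $U$ is an arithmetic progression of difference $a_3$.

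It then remains to contradict $a_1\notin\{\pm a_2\}$. For each pair $\{i,j\}$, with $k$ the third index, the sumset $a_1I_i+a_2I_j$ lies in $T_k$, an arithmetic progression of length $|I_i|+|I_j|+1$, so $|a_1I_i+a_2I_j|\le|I_i|+|I_j|+1$, and likewise for $a_2I_i+a_1I_j$. Applying Lemma \ref{R12} to both orientations of every pair and discarding option (i) (which is $a_1\in\{\pm a_2\}$) as well as the relation $4\equiv\pm1\ (\mathrm{mod}\ p)$ — excluded since $\sum_t|X_t|\ge9$ forces $p\ge12$ — I expect to be driven to the rigid configuration $|I_1|=|I_2|=|I_3|=3$ together with $a_2\in\{\pm2a_1\}$ (after possibly swapping $a_1,a_2$). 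In that case a direct computation with length-$3$ intervals shows that both $a_1I_1+a_2I_2$ and $a_2I_1+a_1I_2$ are full arithmetic progressions of length $7$ and difference $a_1$; the constraint $|T_3|=|I_1|+|I_2|+1=7$ then forces these two progressions to coincide, which makes $I_1$ and $I_2$ overlap and contradicts the disjointness of $X_1$ and $X_2$.

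The main obstacle is the middle step: turning the single numerical equality $|U|=\sum_t|X_t|$ into the structural assertion that all three sets are genuine arithmetic progressions sharing one common difference. This is where the three peeling bounds, the simultaneous tightness of Cauchy--Davenport, and Vosper's rigidity must be combined, together with the observation that $U$ is a single long progression whose difference is unique up to sign. Once the interval normalization is in place, the concluding analysis via Lemma \ref{R12} is essentially bookkeeping, and the hypotheses $|X_k|\ge3$ and $\sum_t|X_t|\le p-3$ comfortably absorb the small-$p$ degeneracies that could otherwise derail the coefficient arithmetic.
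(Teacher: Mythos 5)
Your argument is correct and shares its skeleton with the paper's proof: assuming $|U|\leq\sum_i|X_i|$, both you and the paper peel off one set at a time, bound the two-variable piece $T_k$ from below by Lemma \ref{R11}, force equality through Theorem \ref{R3}, and extract progression structure from the tight case via Theorem \ref{R4}, so that each $a_3X_k$ becomes a progression with some difference $r_k$. Where you genuinely diverge is the synchronization of the three differences. The paper never identifies $r_1,r_2,r_3$ directly: it keeps them a priori unrelated, rewrites $a_1X_1+a_2X_2=a_1a_3^{-1}r_1[x_1,y_1]+a_2a_3^{-1}r_2[x_2,y_2]$ and its swap, and then eliminates the alternatives of Lemma \ref{R12} through a three-way case analysis with several mixed factor-two subcases, each dispatched by Corollary \ref{R14} against the fact that the union of the two swapped sums sits inside a single progression $r_3[x_3',y_3']$. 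You instead exploit that $U=T_k+a_3X_k$ is itself one fixed arithmetic progression of length $\geq 9$ carrying difference $r_k$ for every $k$, so up-to-sign uniqueness of its difference yields $r_1=\pm r_2=\pm r_3$ in one stroke; after rescaling to intervals, Lemma \ref{R12} (with option (i) barred by $a_1\notin\{\pm a_2\}$ and the mutual relation barred since $4\not\equiv\pm1$ for $p\geq 12$) pins you to the single configuration $|I_1|=|I_2|=|I_3|=3$, $a_2\in\{\pm 2a_1\}$, which your explicit length-$7$ computation kills — and that terminal computation is exactly parallel to the paper's last subcase, where $a_1X_1+a_2X_2$ and $a_2X_1+a_1X_2$ are both length-$7$ progressions forced to coincide, contradicting disjointness. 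Your route buys a visibly shorter case analysis; its one unsupported step is the up-to-sign uniqueness of the difference of $U$. For $|U|\leq p-5$ this is exactly Proposition \ref{R7}(i)/Corollary \ref{R14}, but your hypotheses permit $|U|\in\{p-4,p-3\}$, where those statements do not apply; patch this by passing to the complement $\Finp\setminus U$, which is an arithmetic progression of length $3$ or $4$ with the same difference as $U$, and for $p\geq 12$ (forced by $\sum_i|X_i|\geq 9$) such a short progression determines its difference up to sign, e.g.\ by comparing the multiplicities of its internal differences. With that one-line repair the proof is complete.
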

\begin{proof}
Assume without loss of generality that $a_1\not\in\{\pm a_2\}$ and  set\\ $S :=\bigcup_{\sigma\in \Per_3}\sum_{i=1}^3a_{\sigma(i)}X_i$. We assume that $|S|\leq |X_1|+|X_2|+|X_3|$, and we arrive to a contradiction. By Lemma \ref{R11} 
\begin{equation*}
|a_1X_1+a_2X_2\cup a_1X_2+a_2X_1|>|X_1|+|X_2|.
\end{equation*}
Thus
\begin{align*}
\sum_{i=1}^3|X_i|&\leq |(a_1X_1+a_2X_2\cup a_1X_2+a_2X_1)|+|a_3X_3|-1\\
&\leq |(a_1X_1+a_2X_2\cup a_1X_2+a_2X_1)+a_3X_3|& \text{by Theorem \ref{R3}}\\
&=|a_1X_1+a_2X_2+a_3X_3\cup a_1X_2+a_2X_1+a_3X_3|\\
&\leq |S|;
\end{align*}
 particularly
\begin{equation}
\label{E18}
|S|=|(a_1X_1+a_2X_2\cup a_1X_2+a_2X_1)+a_3X_3|=|X_1|+|X_2|+|X_3|.
\end{equation}
From Theorem \ref{R4} and (\ref{E18}), there exist $x_3,y_3,x_3',y_3',r_3\in\Finp$ such that
\begin{equation}
\label{E19}
a_1X_1+a_2X_2\cup a_1X_2+a_2X_1=r_3[x_3',y_3']\qquad\text{and}\qquad a_3X_3=r_3[x_3,y_3].
\end{equation}
  In the same way, there are $x_1,y_1,x_2,y_2,r_1,r_2\in\Finp$ such that $a_3X_2=r_2[x_2,y_2]$ and $a_3X_1=r_1[x_1,y_1]$. Then
\begin{equation*}
  a_1X_1+a_2X_2=a_1a_3^{-1}r_1[x_1,y_1]+a_2a_3^{-1}r_2[x_2,y_2]
 \end{equation*}
 and 
\begin{equation*}
  a_2X_1+a_1X_2=a_2a_3^{-1}r_1[x_1,y_1]+a_1a_3^{-1}r_2[x_2,y_2]
 \end{equation*}
so, by Lemma \ref{R12} and (\ref{E18}), we have  the following cases:
\begin{enumerate}
\item[] If $a_1a_3^{-1}r_1\in\{\pm a_2a_3^{-1}r_2\}$ and $a_2a_3^{-1}r_1\in\{\pm a_1a_3^{-1}r_2\}$, then there are $z_1, z_2,\\ z'_1, z'_2\in\Finp$ such that 
 $a_1X_1+a_2X_2=a_1a_3^{-1}r_1[z_1,z_2]$ and $a_2X_1+a_1X_2=a_2a_3^{-1}r_1[z'_1,z'_2]$.
By Corollary \ref{R14} and (\ref{E19}), $a_1a_3^{-1}r_1,a_2a_3^{-1}r_1\in\{\pm r_3\}$ so $a_1\in\{\pm a_2\}$ contradicting the hypothesis.

\item[] If $a_1a_3^{-1}r_1\in\{\pm a_2a_3^{-1}r_2\}$ and $a_2a_3^{-1}r_1\not\in\{\pm a_1a_3^{-1}r_2\}$, then either $a_2a_3^{-1}r_1\in\{\pm 2a_1a_3^{-1}r_2\}$ or $a_1a_3^{-1}r_2\in\{\pm 2a_2a_3^{-1}r_1\}$. It will be assumed without loss of generality that $a_2a_3^{-1}r_1\in\{\pm 2a_1a_3^{-1}r_2\}$. Hence there are $z_1,z_2,z'_1,z'_2\in\Finp$ such that $a_1X_1+a_2X_2=a_2a_3^{-1}r_2[z_1,z_2]$ and $a_2X_1+a_1X_2=a_1a_3^{-1}r_2[z'_1,z'_2]$.
From Corollary \ref{R14} and (\ref{E19}), $a_2a_3^{-1}r_2,a_1a_3^{-1}r_2\in\{\pm r_3\}$ so $a_1\in\{\pm a_2\}$ which contradicts the assumption. The case $a_2a_3^{-1}r_1\in\{\pm a_1a_3^{-1}r_2\}$ and $a_1a_3^{-1}r_1\not\in\{\pm a_2a_3^{-1}r_2\}$ is solved in the same way.

\item[] Assume that  $a_1a_3^{-1}r_1\not\in\{\pm a_2a_3^{-1}r_2\}$ and $a_2a_3^{-1}r_1\not\in\{\pm a_1a_3^{-1}r_2\}$. Lemma \ref{R12} establishes that $a_1a_3^{-1}r_1\in\{\pm 2a_2a_3^{-1}r_2\}$ or $a_2a_3^{-1}r_2 \in \{\pm 2a_1a_3^{-1}r_1\}$,  and $a_2a_3^{-1}r_1\in\{\pm 2a_1a_3^{-1}r_2\}$ or $a_1a_3^{-1}r_2\in\{\pm 2a_2a_3^{-1}r_1\}$. In some of the cases, we arrive to a contradiction proceeding exactly as above. Up to symmetric cases, the unique possibility remaining is $a_1a_3^{-1}r_1\in\{\pm 2a_2a_3^{-1}r_2\}$ and $a_1a_3^{-1}r_2\in\{\pm 2a_2a_3^{-1}r_1\}$ with $[x_1,x_1+2]=[x_1,y_1]$ and $[x_2,x_2+2]=[x_2,y_2]$. Suppose without loss of generality that $a_1a_3^{-1}r_1=2a_2a_3^{-1}r_2$. If $a_1a_3^{-1}r_2=2a_2a_3^{-1}r_1$, then 
\begin{equation*}
a_1X_1+a_2X_2=a_2a_3^{-1}r_2[2x_1+x_2,2x_1+x_2+6]
\end{equation*} 
and 
\begin{equation*}
a_2X_1+a_1X_2=a_2a_3^{-1}r_1[x_1+2x_2,x_1+2x_2+6]
\end{equation*} so Corollary \ref{R14} and (\ref{E19}) establish that $r_1\in\{\pm r_2\}$; moreover, (\ref{E19}) leads to \begin{equation*}
a_2a_3^{-1}r_2[2x_1+x_2,2x_1+x_2+6]=a_2a_3^{-1}r_1[x_1+2x_2,x_1+2x_2+6]
\end{equation*} and therefore $X_1\cap X_2\neq \emptyset$ which is impossible. If $a_1a_3^{-1}r_2=-2a_2a_3^{-1}r_1$, then 
\begin{equation*}
a_1X_1+a_2X_2=a_2a_3^{-1}r_2[2x_1+x_2,2x_1+x_2+6]
\end{equation*} 
and 
\begin{equation*}
a_2X_1+a_1X_2=a_2a_3^{-1}r_1[x_1-2x_2-4,x_1-2x_2+2].
\end{equation*}
As above
 \begin{equation*}
a_2a_3^{-1}r_2[2x_1+x_2,2x_1+x_2+6]=a_2a_3^{-1}r_1[x_1-2x_2-4,x_1-2x_2+2]
\end{equation*} so  $r_1\in\{\pm r_2\}$  by  Proposition \ref{R7}; however, this contradicts the equalities $a_1a_3^{-1}r_1=2a_2a_3^{-1}r_2$ and $a_1a_3^{-1}r_2=-2a_2a_3^{-1}r_1$.
\end{enumerate}
\end{proof}
\begin{lem}
\label{R15}
Let $X_1$ and $X_2$ be  disjoint subsets of $\Finp$ with  $\min\{|X_1|,|X_2|\}\geq 3$ and $|X_1|+|X_2|\leq p-4$. If there are $x_1,y_1,r\in\Finp$ such that $X_1=r[x_1,y_1]$ and 
\begin{equation}
\label{E62}
|X_1-X_2\cup X_2-X_1|\leq |X_1|+|X_2|+1,
\end{equation}
then one of the following statements hold true:
\begin{enumerate}
 \item[(i)]There are $c,z,z'\in\Finp$ such that $X_2=r\big([y_1+c,x_1-c]\setminus \{z,z'\}\big)$.
 
 \item[(ii)]There are $x_2,y_2\in\Finp$ such that $X_2=r\big([x_2,y_2]\cup[x_1+y_1-y_2,x_1+y_1-x_2]\big)$ and $|X_1|=3$.
\end{enumerate} 
\end{lem}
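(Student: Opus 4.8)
The plan is to show that $X_2$ is a union of very few arithmetic progressions of common difference $r$, positioned symmetrically with respect to $X_1=r[x_1,y_1]$. Write $m:=|X_1|+|X_2|$ and $S:=(X_1-X_2)\cup(X_2-X_1)$, and note that $X_2-X_1=-(X_1-X_2)$, so $S=-S$ and $|X_1-X_2|=|X_2-X_1|$; by Theorem \ref{R3} we have $|X_1-X_2|\ge m-1$, while $|X_1-X_2|\le|S|\le m+1$ by hypothesis. To bound the number of arcs of $X_2$, decompose it into maximal $r$-arcs, $X_2=\bigcup_{i=1}^{n}r[\alpha_i,\beta_i]$; then $-X_2$ is likewise a union of $n$ maximal $r$-arcs, and Proposition \ref{R23} applied to $(-X_2)+X_1$ (with $X_1$ the progression being added) yields $|X_1-X_2|\ge|X_1|+|X_2|+n-2=m+n-2$. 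Since $|X_1-X_2|\le m+1$, this forces $n\le 3$, so $X_2$ is a union of at most three $r$-arcs, the shape common to (i) and (ii).

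Next I would pin down the interior gaps. Because $|X_1-X_2|\le m+1<p$, the sumset $(-X_2)+X_1$ does not cover $\Finp$, so $X_2$ has a gap of length at least $|X_1|$; cutting the line there, let $r[\alpha,\beta]$ be the convex hull of $X_2$ and $g_1,\dots,g_{n-1}\ge 1$ the remaining interior gaps. Since adding the length-$|X_1|$ progression $X_1$ fills exactly the gaps of length at most $|X_1|-1$, a direct count gives
\begin{equation*}
|X_1-X_2|=m-1+\sum_{i=1}^{n-1}\min\{g_i,\,|X_1|-1\}\le m+1,
\end{equation*}
so the interior gaps contribute at most $2$. For $n=3$ each summand is at least $1$, forcing $g_1=g_2=1$ and hence $X_2=r([\alpha,\beta]\setminus\{z,z'\})$ with two interior holes. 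For $n=2$ the single gap satisfies either $g_1\le 2$, or $|X_1|=3$ (the only way $\min\{g_1,|X_1|-1\}=2$ can hold for an arbitrarily large $g_1$). For $n=1$ there is no constraint.

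Finally I would exploit the symmetry $S=-S$. From $|X_1-X_2|\ge m-1$ and $|S|\le m+1$ one gets $|(X_1-X_2)\setminus(X_2-X_1)|=|S|-|X_1-X_2|\le 2$, so $X_1-X_2$ agrees with its reflection $X_2-X_1$ about $0$ up to at most two points. As $X_1=r[x_1,y_1]$ is invariant under $u\mapsto r(x_1+y_1)-u$, this forces the hull $r[\alpha,\beta]$ to be invariant under the same reflection up to the same slack; absorbing the at most two exceptional points one may take $\alpha+\beta=x_1+y_1$, that is $r[\alpha,\beta]=r[y_1+c,x_1-c]$ for a suitable $c$. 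Combining with the gap analysis: in the cases $n=1$, $n=2$ with $g_1\le 2$, and $n=3$, the set $X_2$ is this symmetric interval with at most two points deleted, which is alternative (i); and in the remaining case $n=2$ with a large gap, which we saw forces $|X_1|=3$, the two arcs are exact reflections of each other about the centre, giving $X_2=r\big([x_2,y_2]\cup[x_1+y_1-y_2,x_1+y_1-x_2]\big)$, which is alternative (ii).

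The main obstacle is this last step: converting the approximate symmetry ``$X_1-X_2$ equals its negative up to two points'' into the exact normal forms while bookkeeping it against the gap data. In particular one must verify that whenever $X_2$ genuinely splits into two arcs separated by a large middle gap the bound $|S|\le m+1$ can be met only if $|X_1|=3$ (for $|X_1|\ge 4$ the two reflected arcs push $|X_1-X_2|$ strictly beyond $m+1$), and that in every other regime the at most two asymmetric points are accounted for exactly by the deleted points $z,z'$ together with the freedom in the choice of $c$. The degenerate small cases where $|X_1|=3$ or $m$ is close to its minimum have to be examined directly, and they are precisely the ones producing alternative (ii).
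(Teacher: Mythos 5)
Your proposal is correct and takes essentially the same route as the paper: both arguments apply Proposition \ref{R23} together with (\ref{E62}) to show that $X_2$ is a union of at most three separated $r$-arcs and then split into cases according to the number of nonempty arcs. In fact your gap-filling count $|X_1-X_2|=m-1+\sum_i\min\{g_i,|X_1|-1\}$ and the reflection argument via $S=-S$ supply exactly the details the paper dismisses as ``checked straightforward,'' including the correct identification of the two-arc, large-gap case with $|X_1|=3$ as the source of alternative (ii).
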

\begin{proof}
Write $S:=X_1-X_2\cup X_2-X_1$. By Proposition \ref{R23} and (\ref{E62}), there are $x_2,x_3,x_4,y_2,y_3,y_4\in\Finp$ such that $X_2=\bigcup_{i=2}^4r[x_i,y_i]$ and $[x_i,y_i]\cap [x_j-1,y_j+1]=\emptyset$ for all $i\neq j$. Suppose without loss of generality that $[x_2,y_2]\neq \emptyset$. If $[x_3,y_3]=[x_4,y_4]=\emptyset$ or $\emptyset\not\in\{[x_3,y_3],[x_4,y_4]\}$, then (i) is implied by (\ref{E62}). If $\{\emptyset\}\subsetneq\{[x_3,y_3],[x_4,y_4]\}$, then it is checked straightforward  that $X_1$ and $X_2$ need to be as in (i) or (ii).
\end{proof}

\begin{lem}
\label{R16}
Let $X_1,X_2,X_3$ be pairwise disjoint subsets of $\Finp$ such that \\$\min\{|X_1|,|X_2|,|X_3|\}\geq 3$ and $\sum_{i=1}^3|X_i|\leq p-3$. Then
\begin{equation*}
\Bigg|\bigcup_{\{i,j,k\}=\{1,2,3\}}X_i+X_j-X_k\Bigg|>\sum_{i=1}^3|X_i|.
\end{equation*}
\end{lem}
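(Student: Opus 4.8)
The plan is to assume, for contradiction, that $|S|\le N$, where $N:=\sum_{i=1}^3|X_i|$ and $S:=\bigcup_{\{i,j,k\}=\{1,2,3\}}X_i+X_j-X_k$, and then to show that the three sets are forced to be arithmetic progressions sharing one common difference, whose pairwise disjointness is incompatible with the bound $|S|\le N$. Since $X_i+X_j=X_j+X_i$, the set $S$ is the union of the three sets $A:=X_1+X_2-X_3$, $B:=X_1+X_3-X_2$ and $C:=X_2+X_3-X_1$. The first observation is that Minkowski addition distributes over unions, so that $A\cup B=X_1+D_{23}$, $A\cup C=X_2+D_{13}$ and $B\cup C=X_3+D_{12}$, where $D_{ij}:=(X_i-X_j)\cup(X_j-X_i)$. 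Each of these unions is contained in $S$, so from $|S|\le N$ and the Cauchy--Davenport Theorem (Theorem \ref{R3}) I obtain $|D_{ij}|\le|X_i|+|X_j|+1$ for every pair, which is exactly the hypothesis under which Lemma \ref{R15} applies.

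First I would produce a genuine arithmetic progression among the $X_i$ by examining the three sumsets $P_{ij}:=X_i+X_j$. Writing $A=P_{12}-X_3$, $B=P_{13}-X_2$, $C=P_{23}-X_1$ and using Cauchy--Davenport as above gives $|X_i|+|X_j|-1\le|P_{ij}|\le|X_i|+|X_j|+1$ for each pair. If some $|P_{ij}|=|X_i|+|X_j|-1$, then Vosper's Theorem (Theorem \ref{R4}) makes $X_i$ and $X_j$ arithmetic progressions with a common difference; if some $|P_{ij}|=|X_i|+|X_j|+1$, then $|P_{ij}-X_k|\ge|P_{ij}|+|X_k|-1=N$ forces equality, and Vosper's Theorem applied to $P_{ij}-X_k$ makes $X_k$ an arithmetic progression. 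In the remaining case all three sumsets have size exactly $|X_i|+|X_j|$, and the Hamidoune--R\o dseth Theorem (Theorem \ref{R5}), together with Proposition \ref{R6} for the length-three subcases, shows that each $X_i$ is an interval minus a single point, all with a common difference. In every case, using Proposition \ref{R7} and Corollary \ref{R14} to reconcile the differences coming from the different pairs (legitimate because $3\le|X_i|\le p-5$), I may rescale so that the common difference is $1$ and at least one set, say $X_1=[x_1,y_1]$, is a genuine arithmetic progression.

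With $X_1=[x_1,y_1]$ in hand, I would apply Lemma \ref{R15} to the pairs $(X_1,X_2)$ and $(X_1,X_3)$, whose hypotheses $|D_{12}|\le|X_1|+|X_2|+1$ and $|D_{13}|\le|X_1|+|X_3|+1$ were verified above. This pins down the shape of $X_2$ and $X_3$: by Lemma \ref{R15} each is either a reflected interval $[y_1+c,x_1-c]$ with two points removed, in which case it lies inside the arc of $\Finp$ complementary to $X_1$, or, and only when $|X_1|=3$, a union of an interval with its reflection through the midpoint of $X_1$.

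The contradiction then comes from disjointness. In the main case both $X_2$ and $X_3$ are disjoint near-intervals inside the arc complementary to $X_1$, and substituting their explicit shapes into the identity $B\cup C=X_3+D_{12}$ and counting the resulting union shows $|S|>N$. I expect the main obstacle to be exactly this final accounting: the structural outputs of Lemma \ref{R15} (the positions of the two deleted points, the exceptional reflected case (ii) with $|X_1|=3$, and the residual configuration in which every $X_i$ is an interval minus a single point) each have to be examined, and in each one the pairwise disjointness of $X_1,X_2,X_3$ must be balanced against a careful size estimate for $S$. The computation is routine but delicate, and it is where the hypotheses $\min_i|X_i|\ge 3$ and $N\le p-3$ are genuinely needed, in order to exclude the degenerate wrap-around configurations.
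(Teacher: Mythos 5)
Your overall strategy --- bound the symmetric difference sets via $A\cup B = X_1 + D_{23}$ and Cauchy--Davenport, produce a genuine arithmetic progression among the $X_i$, then apply Lemma \ref{R15} to the two remaining pairs and finish by playing disjointness against a size count --- is exactly the paper's, and your first and third steps match the paper's proof essentially line for line (the paper calls your $D_{jk}$ by $S_i$ and derives the same bound $|S_i|+|X_i|-1\le |S|\le \sum_j|X_j|$). The gap is in your second step, the production of the progression. In the case you call ``remaining'', where $|X_i+X_j|=|X_i|+|X_j|$ for all three pairs, Hamidoune--R\o dseth only gives each $X_i$ as an interval minus one point, and when the puncture is interior no $X_i$ is a genuine progression; Proposition \ref{R7} and Corollary \ref{R14} reconcile the differences across pairs but cannot upgrade a punctured interval to a full progression, so your assertion that ``in every case \dots at least one set is a genuine arithmetic progression'' fails precisely there, and with it the applicability of Lemma \ref{R15}, whose hypothesis is exactly $X_1=r[x_1,y_1]$. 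Worse, in the subcase $|X_1|=|X_2|=|X_3|=3$ Proposition \ref{R6} has the alternative (i) $X_i=X_j+z$, where the common set can be an arbitrary $3$-element set nowhere near a progression; your sketch never addresses this configuration at all (compare the second bullet of the proof of Lemma \ref{R9}, where the analogous translate case is the hardest part). You half-acknowledge the punctured-interval residue in your closing paragraph, but you park it inside the final accounting, after Lemma \ref{R15} has supposedly been applied --- at that point of your argument it is unreachable.

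The repair is available from the bound you already proved, and it is what the paper does: run the dichotomy on the difference sets, not the sumsets. Since $|D_{23}|\le |X_2|+|X_3|+1$, either $|D_{23}|\le |X_2|+|X_3|$, in which case Lemma \ref{R10} applies to the disjoint pair $(X_2,X_3)$ (note $|X_2|+|X_3|\le p-6$ here) and its conclusion hands you a full progression $r[x,y]$ outright; or $|D_{23}|=|X_2|+|X_3|+1$, in which case $D_{23}+X_1\subseteq S$ forces equality in Cauchy--Davenport, $|D_{23}+X_1|=|D_{23}|+|X_1|-1=\sum_i|X_i|\le p-3$, and Vosper's Theorem makes $X_1$ itself a full progression. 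This dichotomy produces the progression in every case, needs neither Hamidoune--R\o dseth nor Proposition \ref{R6} at this stage, and never generates the configurations your route leaves open. Your detour through $P_{ij}=X_i+X_j$ is therefore not just longer but genuinely incomplete: to salvage it you would have to add separate arguments for the all-punctured-intervals case and for the translated-triple case, each of which is real work rather than routine accounting.
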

\begin{proof}
Write $S:=\bigcup_{\{i,j,k\}=\{1,2,3\}}X_i+X_j-X_k$. We suppose that $|S|\leq \sum_{i=1}^3|X_i|$ and we shall arrive to a contradiction. Write
$S_i:=X_j-X_k\cup X_k-X_j$ for all $\{i,j,k\}=\{1,2,3\}$. Then 
\begin{equation}
\label{E21}
|S_i|+|X_i|-1\leq |S_i+X_i|\leq |S|\leq \sum_{j=1}^3|X_j|\qquad\forall\;i\in\{1,2,3\}.
\end{equation}
 We claim that there are $r,x,y\in\Finp$ such that $r[x,y]\in\{X_1,X_2,X_3\}$. Indeed, if $|S_1|\leq |X_2|+|X_3|$, the claim follows from Lemma \ref{R10}. If $|S_1|>|X_2|+|X_3|$, then Theorem \ref{R4} and (\ref{E21}) imply that $X_1=r[x,y]$ for some $r,x,y\in\Finp$. We assume without loss of generality that $X_1=r[x_1,y_1]$ for some $r,x_1,y_1\in\Finp$. Now we may apply Lemma \ref{R15} to $S_2$ and $S_3$ by (\ref{E21}); finally, it is easy to see that if $X_2$ and $X_3$ are as in (i) or (ii) of Lemma \ref{R15}, then $X_1$, $X_2$ and $X_3$ are not pairwise disjoint or $|S_1|>|X_2|+|X_3|+1$.
 \end{proof}
\section{Special cases with $n>3$}

\begin{lem}
\label{R17}
Assume that $n\geq 2$. Let  $x_1,\ldots, x_n,y_1,\ldots,y_n,a$ be elements of $\Finp$  such that $[x_1,y_1],\ldots,$ $[x_n,y_n]$ are pairwise disjoint, 
$3\leq \min_{1\leq i\leq n}|[x_i,y_i]|$ and $\sum_{i=1}^n|[x_i,y_i]|\leq p-1$. Write
\begin{equation*}
S:=\Bigg\{az_i+\sum_{j=1, i\neq j}^nx_j:\;i\in\{1,\ldots,n\}, z_i\in[x_i,y_i]\Bigg\}.
\end{equation*}
 If $x,y\in\Finp$  satisfy that $S\subseteq [x,y]$ and $a\not\in\{0,\pm 1\}$, then
\begin{equation}
\label{E24}
|[x,y]|>\max_{1\leq i\leq n}|[x_i,y_i]|+n-2.
\end{equation}
\end{lem}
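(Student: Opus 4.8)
The plan is to bound $|[x,y]|$ from below for the \emph{smallest} interval containing $S$, since the required inequality only becomes easier as $[x,y]$ grows; thus I would first reduce to the case where $[x,y]$ is the interval hull of $S$, i.e.\ $x,y\in S$. After translating I may assume $[x,y]=[\overline 0,\overline{L-1}]$ with $L:=|[x,y]|$, and I rewrite $S=\bigcup_{i=1}^n T_i$ with $T_i:=a[x_i,y_i]+\sum_{j\neq i}x_j$. Each $T_i$ is an arithmetic progression with common difference $a$ and $|[x_i,y_i]|$ terms, and its distinguished term (coming from $z_i=x_i$) is $\beta_i:=(a-1)x_i+\sum_{j=1}^n x_j$. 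Because $a\neq 1$ and the $x_i$ are pairwise distinct, the bases $\beta_1,\dots,\beta_n$ are pairwise distinct; this is how the disjointness of the $[x_i,y_i]$ enters at first, although it must be used more forcefully later.

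Write $M:=\max_{1\le i\le n}|[x_i,y_i]|$ and pick $i_0$ with $|[x_{i_0},y_{i_0}]|=M$. Applying Lemma \ref{R25} to $a[x_{i_0},y_{i_0}]$ (with $r=a$, $r'=1$, so $\overline R=a^{-1}$ and $k:=\min\{R,p-R\}\ge 2$ since $a\notin\{0,\pm1\}$) shows that every interval containing $T_{i_0}$, and in particular $[x,y]$, satisfies $L\ge M+\min\{k,M\}-1$. This already settles the claim whenever $\min\{k,M\}\ge n$, so the remaining difficulty is concentrated in the regime where $a$ is close to $\pm1$ (hence $k$ is small) and $M$ is small, where a single progression does not force enough length.

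To produce the extra $n-2$ I would argue by induction on $n$, the case $n=2$ being the estimate $L\ge M+1$ above. The shape of the bound recalls Proposition \ref{R23}, but $S$ is not literally a sumset of separated progressions (the translate $\sum_{j\neq i}x_j$ depends on $i$), so that result does not apply directly. Instead, viewing each $T_i$ inside $[\overline 0,\overline{L-1}]$ and using that the bases are distinct, in the generic wraparound-free situation each endpoint of the hull is the least or greatest element of a \emph{unique} progression $T_i$. If some such endpoint-realizing $T_i$ is \emph{not} of maximal length, deleting it shortens the hull of the remaining union by at least $1$ while leaving $M$ unchanged, so the inductive hypothesis applied to the $n-1$ remaining progressions gives $L\ge\big(M+(n-1)-1\big)+1=M+n-1$.

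The main obstacle is the degenerate configurations that block this deletion: when the unique progression realizing an endpoint is also the unique one of maximal length (so deleting it would lower the maximum), equivalently when $T_{i_0}$ already spans the whole hull, and the wraparound cases where $\beta_i$ need not be the least element of $T_i$ and an endpoint is realized by several progressions at once. Here I would combine two ingredients: the quantitative part of Lemma \ref{R25}, by which a maximal progression spanning $[x,y]$ forces $L\ge[M/k]+(k-1)[p/k]$ once $M\ge k$ (large unless $k$ is small); and the disjointness of the $[x_i,y_i]$, which bounds how tightly the $n$ progressions can be packed into an interval of length $L$ and so forces $M$ to grow with $n$ precisely in the small-$k$ regime where the single-progression bound is weak. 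Making this packing estimate uniform in $a$ is the part I expect to be the most delicate, since the scrambling $x\mapsto(a-1)x$ destroys the naive translation argument that works for $a=2$; this is presumably also where the hypothesis $\sum_i|[x_i,y_i]|\le p-1$ is used.
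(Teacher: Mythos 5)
Your first reduction is sound: applying Lemma \ref{R25} to a single maximal progression $T_{i_0}\subseteq[x,y]$ does give $|[x,y]|\geq M+\min\{k,M\}-1$, which disposes of the regime $\min\{k,M\}\geq n$; and the second part of Lemma \ref{R25}, combined with $\sum_i|[x_i,y_i]|\leq p-1$ (which forces $M+3(n-1)\leq p$), does rule out the regime where $M\geq k$ and $k$ is small --- this is exactly how the paper shows, arguing by contradiction from $|[x,y]|\leq M+n-2$, that necessarily $M\leq n-1$. But your mechanism for the remaining small-$M$ regime --- induction on $n$ by deleting a progression that uniquely realizes an endpoint of the hull --- has a genuine gap, and it is precisely the one you flag yourself: when the unique endpoint-realizing progression is also the unique progression of maximal length, deletion drops the maximum to some $M'\leq M-1$, and the inductive bound returns only $|[x,y]|\geq M'+n-1\geq M+n-2$, exactly one short of what you need; the wraparound configurations are likewise unresolved. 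Your proposed repair (``a packing estimate uniform in $a$'') is only a hope, not an argument, so the proposal as it stands does not close the lemma.

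The paper's actual finish is a direct count, not induction, and it is the idea your outline is missing. Having shown $M\leq n-1$, consider the $2n$ elements $ax_i+(a-1)\delta+\sum_{j\neq i}x_j$ with $i\in\{1,\ldots,n\}$ and $\delta\in\{0,1\}$. Two of them coincide only if $x_i+\delta=x_{i'}+\delta'$ (divide by $a-1\neq 0$), and disjointness together with $|[x_i,y_i]|\geq 3$ forbids $x_i\in\{x_{i'}-1,x_{i'},x_{i'}+1\}$ for $i\neq i'$, so all $2n$ are distinct. The $\delta=0$ elements lie in $S\subseteq[x,y]$; for $\delta=1$, note $ax_i+a+\sum_{j\neq i}x_j\in S\subseteq[x,y]$ (take $z_i=x_i+1$), and since $[x,y]$ is an interval its predecessor $ax_i+(a-1)+\sum_{j\neq i}x_j$ also lies in $[x,y]$ unless $ax_i+a+\sum_{j\neq i}x_j$ is the left endpoint, which can happen for at most one index. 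Hence $|[x,y]|\geq 2n-1$, contradicting $|[x,y]|\leq M+n-2\leq 2n-3$. This two-points-per-index trick, using only $a\notin\{0,1\}$ plus the separation of the $x_i$, is what replaces your delicate ``packing'' step; without it (or a worked-out substitute for the degenerate deletion cases) your proof is incomplete.
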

\begin{proof}
Set $I:=[x,y]$ and  $M:=\max_{1\leq i\leq n}|[x_i,y_i]|$. We assume that there exist $x$ and $y$ such that (\ref{E24}) is not true, and we arrive to a contradiction.

First we show that $M\leq n-1$. Indeed, suppose without loss of generality that $M=|[x_1,y_1]|$; then 
\begin{equation}
\label{E64}
a[x_1,y_1]\subseteq S-\sum_{j=2}^nx_j\subseteq I-\sum_{j=2}^nx_j.
\end{equation}
Let $R$ be the element of $\{0,\ldots, p-1\}\subseteq \Zet$ such that $\overline{R}=a$ and we assume without loss of generality that $R<p-R$. Applying  Lemma \ref{R25} to (\ref{E64}), we obtain that 
\begin{align}
\label{E65}
n-1&\geq |I|-M+1\nonumber\\
&= \Bigg|\Bigg(I-\sum_{j=2}^nx_j\Bigg)\setminus a[x_1,y_1]\Bigg|+1\nonumber\\
&\geq \min\{R,M\}.
\end{align}
On one hand the assumptions $3\leq \min_{1\leq i\leq n}|[x_i,y_i]|$ and $\sum_{i=1}^n|[x_i,y_i]|\leq p-1$ yield
 \begin{equation}
 \label{E67}
 M+3(n-1)\leq p.
 \end{equation}
On the other hand if $M>n-1$, then $M\geq R$ by (\ref{E65}). Hence Lemma \ref{R25} leads to the inequality
\begin{align*}
M+n-2&\geq |I|\nonumber\\
&\geq\bigg[\frac{M}{R}\bigg]+(R-1)\bigg[\frac{p}{R}\bigg]\nonumber\\
&>\frac{M-R}{R}+(R-1)\bigg(\frac{p-R}{R}\bigg)
\end{align*}
 and consequently
 \begin{equation}
 \label{E66}
 M+(n-1)\bigg(\frac{R}{R-1}\bigg)+R>p.
 \end{equation}
Inasmuch as $R\geq 2$ (\ref{E66}) and (\ref{E67}) contradict (\ref{E65}) and therefore $M\leq n-1$.

 Define 
\begin{equation*}
S':=\Bigg\{ax_i+(a-1)\delta+\sum_{j=1, i\neq j}^nx_j:\;i\in\{1,\ldots,n\}, \delta\in\{0,1\}\Bigg\};
\end{equation*}
As $x_i\in\{x_j-1,x_j,x_j+1\}$ implies that $i=j$, we conclude that $|S'|=2n$. See that $ax_i+a+\sum_{j=1, i\neq j}^nx_j\in I$ for all $i\in\{1,\ldots,n\}$; then, since $I$ is an interval,  we have that $ax_i+(a-1)+\sum_{j=1, i\neq j}^nx_j\in I$ for all $i\in\{1,\ldots,n\}$ except at most one element. In particular 
\begin{equation}
\label{E1}
|S'\cap I|\geq 2n-1.
\end{equation}
We already know that $M\leq n-1$; then we obtain the following contradiction 
\begin{align*}
2n-1&\leq |S'\cap I|&\text{by (\ref{E1})}\\
&\leq |I|\\
&\leq M+n-2\\
&\leq 2n-3 .\end{align*}
 \end{proof}

\begin{lem}
\label{R18}
Let $n\geq 2$ and  $X_1,\ldots, X_n$ be pairwise disjoint subsets of $\Finp$ with $\min_{1\leq i\leq n}|X_i|\geq 3$ and $\sum_{i=1}^n|X_i|\leq p-5$. For $a\in\Finp^*\setminus\{\pm1\}$
\begin{equation}
\label{E22}
\Bigg|\bigcup_{i=1}^n\Bigg(aX_i+\sum_{j=1,j\neq i}^nX_j\Bigg)\Bigg|>\sum_{i=1}^n|X_i|.
\end{equation}
\end{lem}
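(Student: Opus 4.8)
The plan is to induct on $n$, taking Lemma \ref{R11} (applied with coefficients $a_1=a$, $a_2=1$, which is legitimate since $a\notin\{\pm1\}$) as the base case $n=2$. Throughout write $m:=\sum_{i=1}^n|X_i|$ and $S:=\bigcup_{i=1}^n\big(aX_i+\sum_{j\neq i}X_j\big)$, and suppose for contradiction that $|S|\le m$.

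For each index $i$ I split the union according to where the factor $a$ sits: $S=(S_{\hat{\imath}}+X_i)\cup\big(aX_i+\sum_{j\neq i}X_j\big)$, where $S_{\hat{\imath}}:=\bigcup_{k\neq i}\big(aX_k+\sum_{j\neq i,k}X_j\big)$ is precisely the set of this lemma for the $n-1$ sets $\{X_j\}_{j\neq i}$. The induction hypothesis gives $|S_{\hat{\imath}}|>m-|X_i|$, so Theorem \ref{R3} yields $|S|\ge|S_{\hat{\imath}}+X_i|\ge|S_{\hat{\imath}}|+|X_i|-1>m-1$, whence $|S|\ge m$. Thus the assumption $|S|\le m$ forces $|S|=m$, and then equality must hold throughout: $|S_{\hat{\imath}}|=m-|X_i|+1$ and $|S_{\hat{\imath}}+X_i|=|S_{\hat{\imath}}|+|X_i|-1$ for every $i$. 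Since $\min\{|S_{\hat{\imath}}|,|X_i|\}\ge2$ and this common value is $m\le p-5<p-2$, Vosper's theorem (Theorem \ref{R4}) shows that $X_i$ and $S_{\hat{\imath}}$ are arithmetic progressions with a common difference $r_i$. Hence every $X_i$ is an arithmetic progression, and comparing the differences forced on the sub-sumsets living inside the various $S_{\hat{\imath}}$ (via Lemma \ref{R12} and Corollary \ref{R14}) lets me pin all the $r_i$ to one value $r$ up to sign. Dilating by $r^{-1}$, I may assume each $X_i=[x_i,y_i]$ is an interval with $|[x_i,y_i]|=|X_i|\ge3$ and $\sum_i|[x_i,y_i]|=m$.

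It remains to contradict $|S|=m$ in this interval configuration, and this is where Lemma \ref{R17} enters. Each piece $T_i:=aX_i+\sum_{j\neq i}X_j=a[x_i,y_i]+\big[\sum_{j\neq i}x_j,\sum_{j\neq i}y_j\big]$ is the sum of a progression of difference $a$ and an interval. If for some $i$ the step of $a$ exceeds the length of the second summand, then $T_i$ is a disjoint union of $|X_i|$ translates and a direct count gives $|T_i|-m=L_iM_i+1-n$, where $L_i=|X_i|-1\ge2$ and $M_i=\sum_{j\neq i}(|X_j|-1)\ge2(n-1)$; since $L_iM_i\ge4(n-1)>n-1$ this is positive, so already $|S|\ge|T_i|>m$. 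Hence every $T_i$ is a genuine interval; as each has length at least $m-(n-1)$ by Theorem \ref{R3} while $|S|=m$, two of them cannot be separated by a gap, so $S$ is a single interval $[x,y]$ with $|[x,y]|=m$. But $S$ contains the skeleton $\{az_i+\sum_{j\neq i}x_j:z_i\in[x_i,y_i]\}$ of Lemma \ref{R17}, together with its upper counterpart obtained from the right endpoints $y_j$; Lemma \ref{R17} controls the span of the skeleton inside $\Finp$, and combined with the disjointness of the intervals $X_i$ — which makes their total spread at least $m-1$ — this forces $|[x,y]|\ge(a-1)(y^+-x^-)+(m-n)+1>m$, where $x^-,y^+$ denote the extreme endpoints among the $X_i$. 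This contradicts $|[x,y]|=m$ and closes the induction.

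The main obstacle is the middle passage, i.e.\ turning the boundary equalities into a clean interval model. Forcing each individual $X_i$ to be an arithmetic progression is immediate from Vosper, but reconciling the differences $r_i$ coming from the different splittings into a single $r$ — so that all the $X_i$ and all the intermediate sumsets sit on one progression — is delicate, and is exactly where Lemma \ref{R12} and Corollary \ref{R14} do the work. The second delicate point is the modular bookkeeping in the final step: the informal ``spread'' and ``single interval'' arguments above must be carried out inside $\Finp$ rather than in $\Zet$, avoiding spurious wrap-around, and it is precisely this that Lemma \ref{R17} is designed to make rigorous.
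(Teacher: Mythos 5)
Your outline follows the same skeleton as the paper's proof (induction on $n$, the Cauchy--Davenport equality chain, Vosper to make each $X_i$ and each $S_{\hat{\imath}}$ an arithmetic progression, normalization to intervals, and an endgame via Lemma \ref{R17}), but the difference-pinning step, which you yourself flag as delicate, is a genuine gap rather than a routine verification. Vosper gives $S_{\hat{\imath}}=r_i[x_i',y_i']$ with $|S_{\hat{\imath}}|=m-|X_i|+1$, while a sub-sumset such as $aX_k+\sum_{j\notin\{i,k\}}X_j$ sitting inside $S_{\hat{\imath}}$ has size $m-|X_i|-(n-2)$, so the sizes differ by $n-1$; Corollary \ref{R14} requires the sizes to differ by at most $2$ and hence does not apply once $n\geq 4$, and Lemma \ref{R12} needs an upper bound of the form $|X_k+X_{k'}|\leq |X_k|+|X_{k'}|+1$ that you never derive. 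The paper obtains exactly this bound by a device absent from your sketch: it removes \emph{two} sets at a time, defines $S_{k,k'}:=\bigcup_{i\notin\{k,k'\}}\big(aX_i+\sum_{j\notin\{k,k',i\}}X_j\big)$, applies the induction hypothesis to the remaining $n-2$ sets together with Theorem \ref{R3}, concludes $|X_k+X_{k'}|\leq|X_k|+|X_{k'}|$, and then invokes the \emph{strict} case of Lemma \ref{R12} to get $r_k\in\{\pm r_{k'}\}$ while dodging the exceptional $\pm 2r$ alternatives. This trick needs $n-2\geq 2$, which is precisely why the paper takes $n=3$ as a second base case, settled by Lemma \ref{R13} through a long separate case analysis in which the $\pm2r$ cases of Lemma \ref{R12} must genuinely be fought. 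Your induction with the single base $n=2$ therefore cannot close at $n=3$: neither your $S_{\hat{\imath}}$ splitting nor any size-comparison via Corollary \ref{R14} produces the pinning there, and you would need to cite Lemma \ref{R13} (as the paper does) or reprove it.

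The endgame also has a concrete flaw. The dichotomy ``either the step of $a$ exceeds the length of $\sum_{j\neq i}X_j$, in which case $T_i$ is a disjoint union of $|X_i|$ full translates, or $T_i$ is an interval'' is false in $\Finp$: wrap-around modulo $p$ can make widely spaced translates collide (already for $n=2$ with $|X_1|$ close to $p$ one has $|X_1|(M_1+1)>p$, so the count $|T_i|=|X_i|(M_i+1)$ is impossible), and handling this regime correctly is exactly what Lemma \ref{R25} does inside the proof of Lemma \ref{R17}. Moreover your closing estimate $|[x,y]|\geq (a-1)(y^+-x^-)+(m-n)+1>m$ is not well formed: $a$ and $y^+-x^-$ are elements of $\Finp$, not integers, so the product and the comparison have no meaning, and this is not the conclusion Lemma \ref{R17} provides. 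The paper's actual finish is cleaner and avoids all of this: with every $X_k=[x_k,y_k]$, each $S^{(k)}_z:=az+\sum_{i\neq k}X_i$ is an interval of length $\big(\sum_{i\neq k}|X_i|\big)-(n-2)$, Lemma \ref{R17} shows any interval containing the left endpoints $x^{(k)}_z=az+\sum_{i\neq k}x_i$ has length greater than $\max_k|X_k|+n-2$, and adding the common interval length yields $|S|>\sum_{i=1}^n|X_i|$ directly. You gesture toward this mechanism but substitute an ill-posed integer-style inequality for it, so as written the contradiction is not established.
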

\begin{proof}
We prove it by induction on $n$. If $n\in\{2,3\}$, then the result follows by Lemma \ref{R11} and  Lemma \ref{R13}. We assume that $n\geq 4$ and the result is true for all $m\leq n-1$. Write $S:=\bigcup_{i=1}^n\big(aX_i+\sum_{j=1,j\neq i}^nX_j\big)$  and 
\begin{equation*}
S_k:=\bigcup_{i=1,i\neq k}^n\Bigg(aX_i+\sum_{j=1,j\not\in\{k,i\}}^nX_j\Bigg)\qquad\forall k\in\{1,\ldots,n\}.
\end{equation*}
 We suppose that (\ref{E22}) is not true and we shall get a contradiction. For each $k\in\{1,\ldots,n\}$ 
\begin{align}
\sum_{i=1}^n|X_i|&\leq |S_k|+|X_k|-1&\text{by induction hypothesis}\nonumber\\
&\leq |S_k+X_k|&\text{by Theorem \ref{R3}}\nonumber\\
&=\Bigg| \bigcup_{i=1,i\neq k}^n\Bigg(aX_i+\sum_{j=1,j\neq i}^nX_j\Bigg)\Bigg|\nonumber\\
&\leq |S|\nonumber\\
&\leq \sum_{i=1}^n|X_i|\label{E23}
\end{align} 
and therefore all the inequalities of (\ref{E23}) are equalities. In particular, for all $k\in\{1,\ldots,n\}$, there are $x_k,y_k,x_k',y_k', r_k\in\Finp$ such that $X_k=r_k[x_k,y_k]$ and $S_k=r_k[x_k',y_k']$ by Theorem \ref{R4}. For all $k,k'\in\{1,\ldots,n\}$ with $k<k'$, define
\begin{equation*}
S_{k,k'}:=\bigcup_{i=1,i\not\in\{k,k'\}}^n\Bigg(aX_i+\sum_{j=1,j\not\in\{k,k',i\}}^nX_j\Bigg).
\end{equation*}
Then
\begin{align*}
\Bigg(\sum_{i=1}^n|X_i|\Bigg)-1&\leq |S_{k,k'}|+|X_k|+|X_{k'}|-2&\text{by induction hypothesis}\nonumber\\
&\leq |S_{k,k'}+X_k+X_{k'}|&\text{by Theorem \ref{R3}}\nonumber\\
&=\Bigg| \bigcup_{i=1,i\not\in\{k,k'\}}^n\Bigg(aX_i+\sum_{j=1,j\neq i}^nX_j\Bigg)\Bigg|\nonumber\\
&\leq |S|\nonumber\\
&\leq \sum_{i=1}^n|X_i|
\end{align*} 
and in particular $|X_k+X_{k'}|\leq |X_k|+|X_{k'}|$; then Lemma \ref{R12} yields that $r_k\in\{\pm r_{k'}\}$. As a consequence, we may assume without loss of generality that  $r_k=1$ for all $k\in\{1,\ldots,n\}$. For each $k\in\{1,\ldots,n\}$ and $z\in[x_k,y_k]$, define $S^{(k)}_{z}:=az+\sum_{i=1,i\neq k}^nX_i$, $x^{(k)}_{z}:=az+\sum_{i=1,i\neq k}^nx_i$ and $y^{(k)}_{z}:=az+\sum_{i=1,i\neq k}^ny_i$; thus $S^{(k)}_{z}=\big[ x^{(k)}_{z}, y^{(k)}_{z}\big]$ and 
\begin{equation}
\label{E63}
|S^{(k)}_{z}|=\Bigg|az+\sum_{i=1,i\neq k}^nX_i\Bigg|=\Bigg(\sum_{i=1,i\neq k}^n|X_i|\Bigg)-(n-2).
\end{equation}
However, by Lemma \ref{R17}, if $x,y\in\Finp$ are chosen such that
\begin{equation*}
 \big\{x^{(k)}_{z}:\;k\in\{1,\ldots,n\},\;z\in[x_k,y_k]\big\}\subseteq [x,y],
 \end{equation*} then $|[x,y]|> n-2+\max_{1\leq k\leq n}|X_k|$. Finally assume without loss of generality that $|X_1|=\max_{1\leq k\leq n}|X_k|$. By the above argumentation
\begin{align*}
|S|&=\Bigg|\bigcup_{1\leq k\leq n,\;z\in[x_k,y_k]}S^{(k)}_{z}\Bigg|\\
&> n-2+|X_1|+\Bigg(\Bigg(\sum_{i=2}^n|X_i|\Bigg)-(n-2)\Bigg)&\text{by (\ref{E63})}\\
&=\sum_{i=1}^n|X_i|
\end{align*}
and this contradicts our assumption.
\end{proof}
\begin{rem}
\label{R24}
If $n\geq 3$ in Lemma \ref{R18}, then the assumption $\sum_{i=1}^n|X_i|\leq p-3$ can be weakened  to $\sum_{i=1}^n|X_i|\leq p-3$ since the former assumption is just used in the small cases $n=2$.
\end{rem}
\begin{lem}
\label{R19}
Let $n\geq 3$ and   $X_1,\ldots, X_n$ be  pairwise disjoint subsets of $\Finp$ with $\min_{1\leq i\leq n}|X_i|\geq 3$ and $\sum_{i=1}^n|X_i|\leq p-3$. Then 
\begin{equation}
\label{E25}
\Bigg|\bigcup_{i=1}^n\Bigg(-X_i+\sum_{j=1,j\neq i}^nX_j\Bigg)\Bigg|>\sum_{i=1}^n|X_i|.
\end{equation}
\end{lem}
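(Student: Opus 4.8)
The plan is to induct on $n$, reusing the architecture of the proof of Lemma \ref{R18} but replacing its final appeal to Lemma \ref{R17}---which is available only when $a\notin\{0,\pm1\}$---by an analysis tailored to the case $a=-1$. For $n=3$ the set in (\ref{E25}) equals $\bigcup_{\{i,j,k\}=\{1,2,3\}}X_i+X_j-X_k$, so the claim is exactly Lemma \ref{R16}. Assume now $n\ge 4$ and the result for all $3\le m\le n-1$, write $S$ for the left-hand side of (\ref{E25}), and for each $k$ let $S_k$ be the union of the same shape built from $\{X_i\}_{i\ne k}$. If $|S|\le\sum_i|X_i|$, then the induction hypothesis gives $|S_k|>\sum_{i\ne k}|X_i|$, and since $S_k+X_k\subseteq S$, Theorem \ref{R3} collapses the chain $\sum_i|X_i|\ge|S|\ge|S_k+X_k|\ge|S_k|+|X_k|-1\ge\sum_i|X_i|$ to equalities. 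Thus $|S_k+X_k|=|S_k|+|X_k|-1$, and Theorem \ref{R4} provides $X_k=r_k[x_k,y_k]$ and $S=S_k+X_k=r_k[\alpha_k,\beta_k]$ for every $k$. Hence the one fixed set $S$ is simultaneously an arithmetic progression with each difference $r_k$; since $3\le|S|=\sum_i|X_i|\le p-3$, this difference is unique up to sign (Corollary \ref{R14}, the two largest admissible sizes being settled directly by noting that the complement of $S$ is then an arithmetic progression of $3$ or $4$ terms with the same difference). Therefore $r_k\in\{\pm r_1\}$; scaling all $X_i$ by $r_1^{-1}$ (which changes neither side of (\ref{E25})) and using $r[x,y]=(-r)[-y,-x]$, I may take $r_k=1$, so each $X_k=[x_k,y_k]$ is an interval.

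I then rewrite $S$ as a sumset. Putting $C:=\sum_jx_j$, $D:=\sum_jy_j$ and $s_k:=x_k+y_k$, one computes $-X_k+\sum_{j\ne k}X_j=[C,D]-s_k$, an interval of length $\ell:=\sum_i|X_i|-(n-1)$, so that
\begin{equation*}
S=\bigcup_{k=1}^n\big([C,D]-s_k\big)=[C,D]-T,\qquad T:=\{s_1,\ldots,s_n\}.
\end{equation*}
Each $X_k$ is fixed by the reflection $z\mapsto s_k-z$, so two disjoint intervals cannot share a value of $s_k$ (else one would contain the other); thus $|T|=n$. Now Theorem \ref{R3} gives $|S|\ge\ell+n-1=\sum_i|X_i|$, forcing equality, and the equality case of Theorem \ref{R4} (applicable since $[C,D]$ is an interval, $n\ge 2$, $3\le\ell$ and $|S|\le p-2$) forces $-T$, hence $T$, to consist of $n$ consecutive residues.

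The remaining task---and the main obstacle---is to contradict the fact that the $n$ values $s_k=x_k+y_k$ form a block of consecutive residues. I would cut $\Finp$ at a point of the nonempty complement of $\bigcup_kX_k$ and lift, so that the $X_k=[x_k,y_k]$ become honest integer intervals inside a window of length $p-1$, ordered by $y_1<x_2<\cdots$. Disjointness and $|X_k|\ge 3$ then give the pairwise bound $|s_k-s_{k'}|\ge 6$ in $\Zet$, while the window bound gives $s_n-s_1\le 2p-8$. Writing $s_k=\tilde b+\phi_k+pw_k$, where $\phi_k\in\{0,\ldots,n-1\}$ is the rank of $s_k$ within the residue block $T$, the span estimate forces $\max_kw_k-\min_kw_k\le 2$, so the indices split into at most three classes on which $w_k$ is constant. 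On such a class $s_k-s_{k'}=\phi_k-\phi_{k'}$, so its $\phi$-values are pairwise at least $6$ apart and hence number at most $1+\big[\frac{n-1}{6}\big]$; summing over the (at most three) classes yields $n\le 3\big(1+\big[\frac{n-1}{6}\big]\big)<n$ for every $n\ge 4$, the desired contradiction. The genuinely delicate points are this last unwinding of the cyclic structure of $\Finp$ through a careful lift, and the normalization $r_k=1$ at the upper size limit $p-3$, just outside the range of Corollary \ref{R14}.
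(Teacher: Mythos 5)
Your reduction to a common difference is correct and in fact slicker than the paper's: the observation that the equality chain forces $S=S_k+X_k$, so that the single set $S$ is simultaneously an arithmetic progression with each difference $r_k$, replaces the paper's separate $n=4$ analysis and its $S_{k,k'}$ double-removal argument, and your handling of the sizes $p-3,p-4$ by passing to the complement of $S$ is fine. Your reformulation $S=[C,D]-T$ with $s_k=x_k+y_k$ is also exactly the structure the paper exploits through its set $S'=\big\{-y_i+\sum_{k\neq i}x_k\big\}=C-T$. However, the step $|T|=n$ is a genuine gap. Your justification --- that two disjoint intervals invariant under the same reflection $z\mapsto s-z$ must be nested --- is true on $\Zet$ but false on the cycle $\Zet/p\Zet$: there the reflection has two fixed ``centers'' (the point $2^{-1}s$ and the antipodal edge), and an invariant interval can be centered at either one. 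Concretely, in $\mathbb{F}_{11}$ the disjoint intervals $[10,1]=\{10,0,1\}$ and $[4,7]=\{4,5,6,7\}$ both satisfy $x+y\equiv 0$ and both have at least $3$ elements, and analogues exist for every large $p$. So coincidences $s_k=s_{k'}$ with $k\neq k'$ cannot be excluded at this point, and if $|T|<n$ your endgame collapses: Cauchy--Davenport yields only $|S|\geq \ell+|T|-1<\sum_i|X_i|$, no equality is forced, Vosper cannot be invoked to make $T$ a block of consecutive residues, and the $w_k$-class counting has nothing to act on. (Relatedly, your pairwise bound $|s_k-s_{k'}|\geq 6$ holds only for the lifted integer representatives; it is precisely the mod-$p$ wrap-around that produces equal or nearly equal values of $s_k$, as the example above shows.)

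This coincidence case is exactly what the paper's proof confronts and you skipped: it allows $-y_{i_0}+\sum_{k\neq i_0}x_k=-y_{j_0}+\sum_{k\neq j_0}x_k$, proves in (\ref{E30}) that such a coincidence forces every other value to stay at distance greater than $2$ (in particular, taking $\delta=0$ there, each value of $s$ has multiplicity at most $2$ --- matching the fact that at most one interval from each of the two reflection-centered families can occur among pairwise disjoint sets), and then recovers the needed spread through the window bound $3|S_1'|+|S_2'|$ of (\ref{E29}) before concluding via (\ref{E31}). To repair your argument you would need the same kind of bookkeeping: when $|T|=n-j$, each coincident value pairs one odd-length interval centered at $2^{-1}s$ with one even-length interval centered at the antipode, and you must show these coincidences spread the endpoint set enough to restore the contradiction --- which is, in substance, the paper's (\ref{E29})--(\ref{E31}) computation. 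Everything else in your proposal is sound, but as written the proof does not go through.
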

\begin{proof}
The proof is by induction on $n$. If $n=3$, then this is Lemma \ref{R16}. From now on, $n\geq 4$ and the result is true for $m\in\{3,\ldots, n-1\}$. Write
\begin{equation*}
S:=\bigcup_{i=1}^n\Bigg(-X_i+\sum_{j=1,j\neq i}^nX_j\Bigg)
\end{equation*}
 and
\begin{equation*}
S_k:=\bigcup_{i=1,i\neq k}^n\Bigg(-X_i+\sum_{j=1,j\not\in\{i,k\}}^nX_j\Bigg)\qquad\forall\;k\in\{1,\ldots,n\}.
\end{equation*} 
Assume  that (\ref{E25}) is false, and we shall arrive to a contradiction. See that
\begin{align}
\sum_{i=1}^n|X_i|&\geq|S|\nonumber\\
&\geq|S_k+X_k|\nonumber\\
&\geq|S_k|+|X_k|-1&\text{by Theorem \ref{R3}}\nonumber\\
&\geq\sum_{i=1}^n|X_i|&\text{by induction hypothesis}\label{E26}
\end{align}
so in (\ref{E26}) we have only equalities. Then, from Theorem \ref{R4},  there are $r_k, x_k, y_k,\\ r'_k , x'_k, y'_k \in\Finp$ such that $X_k=r_k[x_k,y_k]$ and $S_k=r_k[x_k',y_k']$ for all $k\in\{1,\ldots,n\}$.

Now we show that if $n=4$,  then $r_i\in\{\pm r_j\}$ for all $ i,j\in\{1,\ldots,4\}$. Indeed, assume without loss of generality that   $r_1\not\in\{\pm r_2\}$ and write $S_{1,2}:=X_1-X_2\cup X_2-X_1$ so 
\begin{equation}
\label{E27}
|S_{1,2}|\geq |r_1[x_1,y_1]-r_2[x_2,y_2]|> |X_1|+|X_2|
\end{equation}
by Lemma \ref{R12}. Then
\begin{align*}
\sum_{i=1}^4|X_i|&\geq|S|\\
&\geq |S_{1,2}+X_3+X_4|\\
&\geq |S_{1,2}|+|X_3|+|X_4|-2&\text{by Theorem \ref{R3}}\\
&\geq \sum_{i=1}^4|X_i|-1&\text{by (\ref{E27}),}
\end{align*}
and consequently $|X_3|+|X_4|\leq |X_3+X_4|$; thus $r_3\in\{\pm r_4\}$ by Lemma \ref{R12}. We have that either $r_3\not\in\{\pm r_2\}$ or  $r_3\not\in\{\pm r_1\}$; assume without loss of generality that $r_3\not\in\{\pm r_1\}$, then proceeding as above $r_2\in\{\pm r_4\}$. Thus for all $\{i,j,k\}=\{2,3,4\}$ there are $z_k,w_k\in\Finp$ such that $X_i+X_j-X_k:=r_2[z_k,w_k]$  and 
\begin{equation}
\label{E28}
|[z_2,w_2]|=|[z_3,w_3]|=|[z_4,w_4]|=\Bigg(\sum_{i=2}^4|X_i|\Bigg)-2.
\end{equation}
One one hand  $|S_1|=1+\sum_{i=2}^4|X_i|$ by (\ref{E26}); from (\ref{E28}) and the assumption $\min_{1\leq i\leq n}|X_i|\geq 3$, there are $z_1,w_1\in\Finp$ such that $S_1=r_2[z_1,w_1]$. On the other hand $S_1=r_1[x_1',y_1']$ so the assumption $r_1\not\in\{\pm r_2\}$ contradicts Proposition \ref{R7}.

 We show that $r_i\in\{\pm r_j\}$ for all $ i,j\in\{1,\ldots, n\}$ whether $n>4$. Call 
\begin{equation*}
S_{k,k'}:=\bigcup_{i=1,i\not\in\{k,k'\}}^n\Bigg(-X_i+\sum_{j=1,j\not\in\{k,k',i\}}^nX_j\Bigg)\qquad \forall\;k,k'\in\{1,\ldots,n\}\text{ with }k<k';
\end{equation*}
thus
\begin{align*}
\Bigg(\sum_{i=1}^n|X_i|\Bigg)-1&\leq |S_{k,k'}|+|X_k|+|X_{k'}|-2&\text{by induction hypothesis}\nonumber\\
&\leq |S_{k,k'}+X_k+X_{k'}|&\text{by Theorem \ref{R3}}\nonumber\\
&=\Bigg| \bigcup_{i=1,i\not\in\{k,k'\}}^n\Bigg(-X_i+\sum_{j=1,j\neq i}^nX_j\Bigg)\Bigg|\nonumber\\
&\leq |S|\nonumber\\
&\leq \sum_{i=1}^n|X_i|
\end{align*} 
and in particular $|X_k+X_{k'}|\leq |X_k|+|X_{k'}|$; then $r_k\in\{\pm r_{k'}\}$ by Lemma \ref{R12}.

We assume without loss of generality that $r_k=1$ for all $k\in\{1,\ldots,n\}$ from now on. Rearranging  $x_1,\ldots, x_n$, we may suppose that $[x_k,y_k]\subseteq [x_1,x_{k+1}]$  for all $k\in\{1,\ldots,n-1\}$. Set $S':=\Big\{-y_i+\sum_{k=1,k\neq i}^nx_k:\;i \in\{1,\ldots,n\}\Big\}$. If for some $ i_0,j_0\in\{1,\ldots,n\}$ with $i_0\neq j_0$ we get
\begin{equation*}
-y_{i_0}+\sum_{k=1,k\neq i_0}^nx_k=-y_{j_0}+\sum_{k=1,k\neq j_0}^nx_k,
\end{equation*}
then for all $k_0\not\in \{i_0,j_0\}$ and $\delta\in[-2,2]$
\begin{equation}
\label{E30}
-y_{k_0}+\sum_{k=1,k\neq k_0}^nx_k\neq \Bigg(-y_{i_0}+\sum_{k=1,k\neq i_0}^nx_k\Bigg)+\delta
\end{equation}
insomuch as $\min_{1\leq i\leq n}|X_i|\geq 3$ and $X_1,\ldots,X_n$ are pairwise disjoint.Call 
\begin{equation*}
S_1':=\Bigg\{-y_i+\sum_{k=1,k\neq i}^nx_k:\; \exists\ j\neq i \text{ such that }-y_i+\sum_{k=1,k\neq i}^nx_k=-y_j+\sum_{k=1,k\neq j}^nx_k\Bigg\}
\end{equation*}
and $
S_2':=S'\setminus S_1'$. If $x,y\in \Finp$ are such that $S'\subseteq [x,y]$,  then 
\begin{align}
\label{E29}
|[x,y]|&\geq 3|S_1'|+|S_2'|&\text{by (\ref{E30})}\nonumber\\
&\geq 2|S_1'|+|S_2'|\nonumber\\
&=n.
\end{align}
On the other hand
\begin{equation}
\label{E31}
\Bigg|-X_i+\sum_{k=1,k\neq i}^nX_k\Bigg|=\Bigg|\Bigg[-y_i+\sum_{k=1,k\neq i}^nx_k, -x_i+\sum_{k=1,k\neq i}^ny_k\Bigg]\Bigg|=\Bigg(\sum_{k=1}^n |X_k|\Bigg)-(n-1).
\end{equation}
Finally
\begin{align*}
\sum_{k=1}^n |X_k|&\geq |S|\\
&=\Bigg|\bigcup_{i=1}^n\Bigg(-X_i+\sum_{k=1,k\neq i}^nX_i\Bigg)\Bigg|\\
&\geq n+\Bigg(\sum_{k=1}^n |X_k|\Bigg)-(n-1)&\text{by (\ref{E29}) and (\ref{E31})}\\
&= \Bigg(\sum_{k=1}^n |X_k|\Bigg)+1
\end{align*}
which is impossible.
\end{proof}
\begin{lem}
\label{R20}
\item(i) Let $a_1,a_2,a_3,a_4\in\{\pm 1\}$ be not all equal and $X_1,\ldots, X_4$ pairwise disjoint subsets of $\Finp$ with $\min_{1\leq i\leq 4}|X_i|\geq 3$ and $\sum_{i=1}^4|X_i|\leq p-4$. Then
\begin{equation}
\label{E32}
\Bigg|\bigcup_{\sigma\in \Per_4}\sum_{i=1}^4a_{\sigma(i)}X_i\Bigg|> \sum_{i=1}^4|X_i|.
\end{equation}
\item(ii) Let $a_1,a_2,a_3,a_4,a_5\in\{\pm 1\}$ be not all equal and $X_1,\ldots, X_5$ pairwise disjoint subsets of $\Finp$ with $\min_{1\leq i\leq 5}|X_i|\geq 3$ and $\sum_{i=1}^5|X_i|\leq p-4$. Then
\begin{equation}
\label{E40}
\Bigg|\bigcup_{\sigma\in \Per_5}\sum_{i=1}^5 a_{\sigma(i)}X_i\Bigg|>\sum_{i=1}^5|X_i|.
\end{equation}
\end{lem}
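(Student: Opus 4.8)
The plan is to reduce everything to Lemma \ref{R19}, Lemma \ref{R16} and part (i), combined with the rigidity of equality in Cauchy--Davenport (Theorems \ref{R3} and \ref{R4}). Write $N:=\sum_i|X_i|$ and let $S$ be the set on the left of (\ref{E32}) (resp. (\ref{E40})). Since replacing $(a_1,\dots,a_n)$ by $(-a_1,\dots,-a_n)$ replaces $S$ by $-S$, of the same cardinality, I may assume that the number of $+1$'s among the $a_i$ is at least the number of $-1$'s; as the union over $\Per_n$ depends only on the multiset $\{a_i\}$, the set $S$ is determined by the sign pattern. When exactly one sign differs (three $+1$'s and one $-1$ for $n=4$, four $+1$'s and one $-1$ for $n=5$), choosing which $X_i$ receives the $-1$ shows $S=\bigcup_{i=1}^n\big(-X_i+\sum_{j\neq i}X_j\big)$, so Lemma \ref{R19} (whose hypothesis $\sum_i|X_i|\le p-3$ follows from $\sum_i|X_i|\le p-4$) gives the claim. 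It remains to treat the two balanced patterns: two $+1$'s and two $-1$'s for $n=4$, and three $+1$'s and two $-1$'s for $n=5$.

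For the balanced cases I argue by contradiction, assuming $|S|\le N$, and peel off one set to expose a configuration already controlled. For $n=4$, fix $\ell$ and collect the summands of $S$ in which $X_\ell$ carries $-1$; these are exactly $U^{(\ell)}-X_\ell$, where $U^{(\ell)}:=\bigcup_{\{i,j\}}\big(X_i+X_j-X_k\big)$ ranges over the other three indices. By Lemma \ref{R16}, $|U^{(\ell)}|>N-|X_\ell|$, so Theorem \ref{R3} yields
\[
N\ge|S|\ge|U^{(\ell)}-X_\ell|\ge|U^{(\ell)}|+|X_\ell|-1\ge N,
\]
whence every inequality is an equality: $|S|=N$, $S=U^{(\ell)}-X_\ell$, and $|U^{(\ell)}-X_\ell|=|U^{(\ell)}|+|X_\ell|-1$. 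As $\min\{|U^{(\ell)}|,|X_\ell|\}\ge2$ and $|S|=N\le p-4\le p-2$, Theorem \ref{R4} forces $X_\ell$ and $S$ to be arithmetic progressions with a common difference. For $n=5$ I peel off an $X_\ell$ carrying $+1$; the other four indices then carry a balanced $2+2$ pattern, so the corresponding union $V^{(\ell)}$ is precisely the set treated in part (i) (its hypotheses hold since $\sum_{i\neq\ell}|X_i|\le p-7$), giving $|V^{(\ell)}|>N-|X_\ell|$, and the same Cauchy--Davenport/Vosper argument applied to $S=X_\ell+V^{(\ell)}$ again makes $X_\ell$ and $S$ arithmetic progressions with a common difference.

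Running this for every $\ell$ exhibits the one fixed set $S$ as an arithmetic progression whose common difference realizes each $X_\ell$ as an arithmetic progression; since the common difference of a progression of length $N$ with $3\le N\le p-2$ is unique up to sign, all the $X_\ell$ are arithmetic progressions with a single common difference $r$, and after scaling by $r^{-1}$ every $X_i$ is an interval $[x_i,y_i]$. Now each summand of $S$ is an interval of length $N-(n-1)$, and the summand in which the indices in a two-element set $T$ receive $-1$ has left endpoint $\sum_i x_i-\sum_{i\in T}(x_i+y_i)$; thus the left endpoints differ only through $\sum_{i\in T}(x_i+y_i)$, whose spread over the choices of $T$ is large precisely because the $X_i$ are pairwise disjoint intervals of length $\ge3$ (their endpoint-sums are widely separated). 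A direct estimate then gives $|S|>N$, contradicting $|S|=N$, and completes both parts.

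The main obstacle is exactly that the coefficients are all $\pm1$, which blocks a direct appeal to Lemma \ref{R18}; the crux is to peel off a single variable so that what remains is either the $n=3$ configuration of Lemma \ref{R16} or the $n=4$ case already proved, and then to exploit the rigidity of Cauchy--Davenport equality through Vosper's theorem to collapse all the $X_i$ onto one arithmetic progression, after which disjointness makes the union demonstrably too large.
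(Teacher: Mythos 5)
Your global strategy coincides with the paper's: reduce to the balanced sign patterns via Lemma \ref{R19}; for those, peel off one set so that Lemma \ref{R16} (resp.\ part (i)) together with Theorem \ref{R3} forces equality throughout, and then Theorem \ref{R4} turns everything into arithmetic progressions. Your device for extracting a \emph{single} common difference --- reading it off the one fixed set $S$, whose common difference is determined up to sign because $3\le |S|=N\le p-4$ --- is clean and legitimately shortcuts the pairwise $r_i\in\{\pm r_j\}$ arguments that the paper imports from Lemma \ref{R19}. The hypothesis bookkeeping in the peeling steps is also correct. Up to the point where all $X_i$ become intervals $[x_i,y_i]$ with a common difference, the proposal is sound and matches the paper's route.

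The genuine gap is in the final ``direct estimate.'' From $|S|=N$ you correctly conclude that the quantities $\sum_{i\in T}(x_i+y_i)$ lie in a short window, hence that the endpoint-sums $x_i+y_i$ are pairwise within $\overline{[-3,3]}$ (resp.\ $\overline{[-4,4]}$). But your stated reason this is impossible --- that pairwise disjoint intervals of length $\ge 3$ have ``widely separated'' endpoint-sums --- is false in $\Zet/p\Zet$: for $p\ge 11$ the disjoint intervals $\big[\overline{0},\overline{2}\big]$ and $\big[\overline{(p+1)/2},\overline{(p+1)/2}+\overline{2}\big]$ have endpoint-sums $\overline{2}$ and $\overline{3}$. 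Disjointness separates endpoint-sums only in the absence of wraparound; modulo $p$, a nearly equal endpoint-sum is exactly what the ``antipodal'' position achieves. The paper's proof supplies the missing positional step: each $X_j$ with $j\ge 2$ is disjoint from $X_1$, so it lies in the segment $[y_1+1,x_1-1]$, where no wraparound occurs, and the window constraint then pins $X_j=[y_1+b_j,x_1-c_j]$ with $c_j\in[b_j-3,b_j+3]$, i.e.\ each $X_j$ is nearly centered at the midpoint of that segment; three (resp.\ four) pairwise disjoint intervals of length $\ge 3$ cannot all be so centered, and only this yields the contradiction. Your argument is repairable exactly along these lines, but as written the crux inequality $|S|>N$ rests on a claim that fails over $\Finp$.
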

\begin{proof}
To prove (i), it is enough to do the case $1=a_1=a_2=-a_3=-a_4$ by Lemma \ref{R19}. We assume that (\ref{E32}) is false and we arrive to a contradiction. As in the first part of Lemma \ref{R19}, we can reduce to the case $X_k=[x_k,y_k]$ with $x_k,y_k\in\Finp$ for $k\in\{1,\ldots,4\}$ (however, instead of using the induction step, we use Lemma \ref{R16}). Write 
$S_{1,2}:=X_1-X_2\cup X_2-X_1$ so
\begin{align*}
\sum_{i=1}^4|X_i|&\geq \Bigg|\bigcup_{\sigma\in \Per_4}\sum_{i=1}^4a_{\sigma(i)}X_i\Bigg|\\
&\geq|S_{1,2}+X_3-X_4|\\
&\geq|S_{1,2}|+|X_3|+|X_4|-2&\text{by Theorem \ref{R3};}
\end{align*}
then $|S_{1,2}|\leq |X_1|+|X_2|+2$  and thereby there are $b_2,c_2\in\Finp$ with $c_2\in[b_2-3,b_2+3]$ such that $X_2=[y_1+b_2,x_1-c_2]$.  In the same way, there are $b_3,b_4,c_3,c_4\in\Finp$ such that
$X_3=[y_1+b_3,x_1-c_3]$ and $X_4=[y_1+b_4,x_1-c_4]$ with $c_3\in[b_3-3,b_3+3]$ and $c_4\in[b_4-3,b_4+3]$; this contradicts the pairwise disjointedness of $X_2,X_3,$ and $X_4$.

To show (ii), it is enough to do the case $1=a_1=a_2=a_3=-a_4=-a_5$ by Lemma \ref{R19}. We assume that (\ref{E40}) is false and we get a contradiction. As in the first part of Lemma \ref{R19} (however instead of using the induction step, we use Lemma \ref{R20} (i)), we can reduce to the case $X_k=[x_k,y_k]$ with $x_k,y_k\in\Finp$ for $k\in\{1,\ldots,5\}$. Call $S_{1,2}:=X_1-X_2\cup X_2-X_1$ and we deduce that $|S_{1,2}|\leq |X_1|+|X_2|+3$  with the same analysis as in (i).  This means that there are $b_2',c_2'\in\Finp$ such that $X_2=[y_1+b_2',x_1-c_2']$ with $c_2'\in[b_2'-4,b_2'+4]$. In the same way, there are $b_3',b_4',b_5',c_3',c_4',c_5'\in\Finp$ such that $X_i=[y_1+b_i',x_1-c_i']$
with $c_i'\in[b_i'-4,b_i'+4]$ for  all $i\in\{3,4,5\}$. Then $X_1,X_2,X_3,X_4$ and $X_5$ are not disjoint.
\end{proof}

\begin{lem}
\label{R21}
Let $X_1,\ldots,X_4\subseteq \Finp$ be pairwise disjoint subsets  with $\min_{1\leq i\leq 4}|X_i|  \\\geq 3$ and $\sum_{i=1}^4|X_i|\leq p-4.$ If $a_1,a_2,a_3,a_4$ are elements of $\Finp^*$ such that   $a_1=a_2=-a_3$, then 
\begin{equation}
\label{E37}
\Bigg|\bigcup_{\sigma\in \Per_4}\sum_{i=1}^4a_{\sigma(i)}X_i\Bigg|>\sum_{i=1}^4|X_i|.
\end{equation}
\end{lem}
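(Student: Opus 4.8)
The plan is to normalise and then branch on the free coefficient. Since $x\mapsto a_1^{-1}x$ is a bijection of $\Finp$, the cardinality in (\ref{E37}) is unchanged if every $a_i$ is replaced by $a_1^{-1}a_i$; as $a_1=a_2=-a_3$ this lets me assume $a_1=a_2=1$, $a_3=-1$ and write $b:=a_4\in\Finp^*$. If $b=1$ the multiset of coefficients is $\{1,1,1,-1\}$ and the left-hand side of (\ref{E37}) is exactly $|\bigcup_{i=1}^4(-X_i+\sum_{j\neq i}X_j)|$, so the bound is Lemma \ref{R19}; if $b=-1$ the multiset is $\{1,1,-1,-1\}\subseteq\{\pm1\}$ and is non-constant, so the bound is Lemma \ref{R20}(i). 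The substance is therefore the case $b\notin\{\pm1\}$, which I argue by contradiction: writing $S:=\bigcup_{\sigma\in\Per_4}\sum_{i=1}^4a_{\sigma(i)}X_i$, I assume $|S|\leq\sum_{i=1}^4|X_i|$.

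For each $\ell\in\{1,2,3,4\}$ set $V_\ell:=\bigcup_{\{i,j,k\}=\{1,2,3,4\}\setminus\{\ell\}}(X_i+X_j-X_k)$. The assignment giving $X_\ell$ the coefficient $b$, two of the remaining sets $+1$ and the last $-1$ shows $bX_\ell+V_\ell\subseteq S$, while Lemma \ref{R16} gives $|V_\ell|>\sum_{i\neq\ell}|X_i|$. Hence, by Theorem \ref{R3},
\[
\sum_{i=1}^4|X_i|\ \geq\ |S|\ \geq\ |bX_\ell+V_\ell|\ \geq\ |X_\ell|+|V_\ell|-1\ >\ \sum_{i=1}^4|X_i|-1 ,
\]
so all these are equalities: $bX_\ell+V_\ell=S$ and $|bX_\ell+V_\ell|=|bX_\ell|+|V_\ell|-1$. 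Because $\min\{|bX_\ell|,|V_\ell|\}\geq3$ and $|S|\leq p-4\leq p-2$, Theorem \ref{R4} applies and forces $bX_\ell$, $V_\ell$ and $S$ to be arithmetic progressions with one common difference. As $|S|\geq12$ this difference is unique up to sign, so letting $\ell$ vary shows that $X_1,\dots,X_4$ are arithmetic progressions all having the same common difference up to sign. Using $r[x,y]=(-r)[-y,-x]$ and rescaling, I may assume $X_i=[x_i,y_i]$ for every $i$; then $bX_\ell$, hence $S$ and each $V_\ell$, have common difference $b$.

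The contradiction comes from the two incompatible descriptions of $V_\ell$. With the $X_i$ intervals, each $X_i+X_j-X_k$ is an interval of common difference $1$ and length $\sum_{i\neq\ell}|X_i|-2\geq7$, so $V_\ell$ is a union of three such long difference-$1$ intervals; but $V_\ell$ is also an arithmetic progression of common difference $b\notin\{\pm1\}$. Applying $b^{-1}$ turns $V_\ell$ into an interval containing a difference-$b^{-1}$ progression of length $\geq7$, and Lemma \ref{R25} (with $\sum_i|X_i|\leq p-4$) rules this out whenever $\min\{R,p-R\}$ is small, where $\overline R=b^{-1}$; the remaining residues are handled uniformly by the spreading estimate of Lemma \ref{R17}, applied to the strips $bz+V_\ell$ ($z\in X_\ell$) of $S=bX_\ell+V_\ell$ exactly as in the proof of Lemma \ref{R18}, which yields the strict inequality $|S|>\sum_i|X_i|$ contradicting the assumption.

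The only real difficulty I expect is this endgame. The reductions above are routine once Lemmas \ref{R16}, \ref{R19} and \ref{R20} are available, but turning ``$V_\ell$ is simultaneously a union of difference-$1$ intervals and a difference-$b$ progression'' into a strict cardinality gain, uniformly in $b\notin\{\pm1\}$, is delicate: the plain containment bound of Lemma \ref{R25} misses the intermediate residues, so the quantitative spreading of Lemma \ref{R17} (which covers every $b\notin\{0,\pm1\}$ at once) is what must carry the argument, together with a separate check of the extremal configurations $|X_1|=\dots=|X_4|=3$ as in Lemmas \ref{R9} and \ref{R20}.
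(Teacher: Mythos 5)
Your reduction and the first half of your argument are sound and essentially coincide with the paper's: the normalization to $a_1=a_2=1$, $a_3=-1$, the dispatch of $b=\pm 1$ through Lemma \ref{R19} and Lemma \ref{R20}(i), and the Vosper step are exactly the paper's opening (your $V_\ell$ is the paper's $S_\ell$, and the equality chain forcing $bX_\ell$ and $V_\ell$ to be progressions is the paper's display (\ref{E38})). Your shortcut for aligning the differences --- the single progression $S$ of size between $12$ and $p-4$ has a difference unique up to sign, so all $r_\ell$ agree --- is slicker than the paper's route, which inserts the mixed set $a_3X_3+a_4X_4\cup a_3X_4+a_4X_3$, invokes Lemma \ref{R11} and Cauchy--Davenport to extract $|a_1X_1+a_2X_2|\leq |X_1|+|X_2|$, and then applies Lemma \ref{R12} pairwise. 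Where you genuinely diverge is the endgame. The paper never uses $b\notin\{\pm 1\}$ quantitatively there: it bounds $|a_1X_1+a_3X_2\cup a_1X_2+a_3X_1|\leq |X_1|+|X_2|+1$ via the strict case of Lemma \ref{R12} applied to $a_2X_3+a_4X_4$, concludes $X_2=[y_1+b_2,x_1-c_2]$ with $c_2\in[b_2-2,b_2+2]$, repeats this for $X_3$ and $X_4$, and contradicts pairwise disjointness. You instead exploit the tension inside a single $V_\ell$: a difference-$b$ progression of size $L+3$ containing a difference-one interval of length $L:=\sum_{i\neq\ell}|X_i|-2\geq 7$.

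Your endgame does work, but not via the fallback you hedge on, and you should finish the computation rather than defer it. Writing $V_\ell=b[u,v]$ and $[w,w+L-1]=b\big([u,v]\setminus Y\big)$ with $|Y|=3$, the first claim of Lemma \ref{R25} gives $4=|Y|+1\geq \min\{p-R,R,L\}$ with $\overline{R}=b$ (not $b^{-1}$ --- a harmless slip), and since $L\geq 7$ this forces $k:=\min\{R,p-R\}\in\{2,3,4\}$ because $b\neq\pm 1$. The second claim then yields $L+3\geq \big[\frac{L}{k}\big]+(k-1)\big[\frac{p}{k}\big]$, which for each $k\in\{2,3,4\}$ forces $L\geq p-8$; but $\sum_{i=1}^4|X_i|\leq p-4$ and $|X_\ell|\geq 3$ give $L\leq p-9$, a contradiction. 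So Lemma \ref{R25} alone closes \emph{every} residue; there are no ``remaining residues.'' This matters because your proposed fallback is not licensed: Lemma \ref{R17} concerns the specific set $\big\{az_i+\sum_{j\neq i}x_j\big\}$ of dilated points with fixed translates contained in an interval, and the transfer in Lemma \ref{R18} relies on the strips being difference-one intervals; your strips $bz+V_\ell$ are difference-$b$ progressions inside the difference-$b$ progression $S$, so the Lemma \ref{R18} argument does not apply ``exactly,'' and had Lemma \ref{R25} genuinely left residues uncovered your proof would have a hole there. Delete the appeal to Lemma \ref{R17}, insert the short computation above, and your proof is complete --- with an endgame that is genuinely different from, and shorter than, the paper's disjointness argument.
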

\begin{proof}
From Lemma \ref{R20} we may assume that $a_4\not\in\{\pm a_1\}$. We arrive to a contradiction whether  (\ref{E37}) is false.  Write
\begin{equation*}
S:=\bigcup_{\sigma\in \Per_4}\sum_{i=1}^4a_{\sigma(i)}X_i\qquad\text{and}\qquad S_4:=\bigcup_{\sigma\in \Per_3}\sum_{i=1}^3a_{\sigma(i)}X_i.
\end{equation*} 
Then
\begin{align}
\sum_{i=1}^4|X_i|&\geq |S|\nonumber\\
&\geq|S_4+a_4X_4|\nonumber\\
&\geq|S_4|+|X_4|-1&\text{by Theorem \ref{R3}}\nonumber\\
&\geq\sum_{i=1}^4|X_i|&\text{by Lemma \ref{R16};}\label{E38}
\end{align}
Thus all the relations in (\ref{E38}) are equalities. By Theorem \ref{R4} there are $r_4,x_4,y_4,\\x_4',y_4' \in\Finp$ such that $a_4X_4=r_4[x_4,y_4]$ and $S_4=r_4[x_4',y_4']$. Analogously  there are $r_i,x_i,y_i,x_i',y_i'\in\Finp$
such that $a_4X_i=r_i[x_i,y_i]$ for all $i\in\{1,2,3\}$. Call $S_{3,4}:=a_3X_3+a_4X_4\cup a_3X_4+a_4X_3$ and note that
\begin{align}
\sum_{i=1}^4|X_i|&\geq |S|\nonumber\\
&\geq|a_1X_1+a_2X_2+S_{3,4}|\nonumber\\
&\geq|X_1|+|X_2|+|S_{3,4}|-2&\text{by Theorem \ref{R3}}\nonumber\\
&\geq\Bigg(\sum_{i=1}^4|X_i|\Bigg)-1&\text{by Lemma \ref{R11}}\nonumber
\end{align} 
thus $|X_1|+|X_2|\geq |a_1X_1+a_2X_2|$, and $r_1\in\{\pm r_2\}$ by Lemma \ref{R12}. In the same way, it can be proven that $r_i\in\{\pm r_j\}$ for all $i,j\in\{1,\ldots,4\}$. Assume without loss of generality that $r_i=a_4$ for all $i\in\{1,\ldots,4\}$ and call $S'_{1,2}:=a_1X_1+a_3X_2\cup a_1X_2+a_3X_1$. Then 
\begin{align}
\sum_{i=1}^4|X_i|&\geq |S|\nonumber\\
&\geq|S_{1,2}'+a_2X_3+a_4X_4|\nonumber\\
&\geq|S_{1,2}'|+|X_3|+|X_4|&\text{by Lemma \ref{R12}}\nonumber\\
&\geq\Bigg(\sum_{i=1}^4|X_i|\Bigg)-1&\text{by Theorem \ref{R3}.}\label{E41}
\end{align}
Hence (\ref{E41}) states that  $|S'_{1,2}|\leq |X_1|+|X_2|+1$ and thereby there are $b_2,c_2\in\Finp$ such that $X_2=[y_1+b_2,x_1-c_2]$ with $c_2\in [b_2-2,b_2+2]$. Proceeding as above, there are $b_3,b_4,c_3,c_4\in\Finp$ such that $X_3=[y_1+b_3,x_1-c_3]$ and $X_4=[y_1+b_4,x_1-c_4]$ with $c_3\in [b_3-2,b_3+2]$ and $c_4\in [b_4-2,b_4+2]$; thus $X_1,X_2,X_3$ and $X_4$ are not pairwise disjoint.
\end{proof}
\section{Proof of Theorem 1.1}
In this section we prove Theorem \ref{R1}. Assume without loss of generality that the $a_1,\ldots,a_n$ are ordered such that  there exist  $1\leq k_1<k_2<\ldots<k_m=n$ with  $a_1=a_2=\ldots=a_{k_1}$,  $a_{k_i+1}=a_{k_i+2}=\ldots=a_{k_{i+1}}$ for all $i\in\{1,\ldots,m-1\}$ and with $a_{k_i}=a_{k_j}$ only if $i=j$. 
\begin{proof}\emph{(Theorem \ref{R1})} The proof is by induction on $n$. The result follows from Lemma \ref{R10} and Lemma \ref{R11} when $n=2$. Also the result follows from Lemma \ref{R13} and Lemma \ref{R16} when $n=3$. From now on $n\geq 4$ and we assume that the result is true for all $ n'\in\{2,\ldots,n-1\}$. The induction step  depends on  $m$ and we analyze the following cases: 
\begin{enumerate}
\item[]Suppose that  $m\geq 4$. Then we can find a partition $A_1\cup A_2$ of $\{a_{k_1},\ldots,a_{k_m}\}$ such that $\min\{|A_1|,|A_2|\}>1$ and there are $b_i,c_i\in A_i$ such that $b_i\not\in\{\pm c_i\}$ for $i\in\{1,2\}$. Assume without loss of generality that $A_1=\{a_{k_1},a_{k_2}\}$ and $A_2=\{a_{k_3},\ldots,a_{k_m}\}$. Set
\begin{equation*}
S_1:=\bigcup_{\sigma\in \Per_{k_2}}\sum_{i=1}^{k_2}a_{\sigma(i)}X_i\qquad\text{and}\qquad S_2:=\bigcup_{\sigma\in \Per_{n-k_2}}\sum_{i=1}^{n-k_2}a_{\sigma(i)+k_2}X_{i+k_2};
\end{equation*}
 then 
 \begin{align}
 \label{E2}
  |S|&\geq|S_1+S_2|\nonumber\\
  &\geq|S_1|+|S_2|-1&\text{by Theorem \ref{R3}}\nonumber\\
  &\geq \Bigg(\sum_{i=1}^{k_2}|X_i|\Bigg)+1+\Bigg(\sum_{i=k_2+1}^n|X_i|\Bigg)+1-1&\text{by induction}\nonumber\\
  &>\Bigg(\sum_{i=1}^n|X_i|\Bigg).
 \end{align}
 
 Until the end of the proof, we assume without loss of generality that $k_1\geq k_2-k_1\geq\ldots\geq k_m-k_{m-1}$.
 
\item[]Suppose that $m=3$. First we deal with the case $a_{k_1}\neq -a_{k_2}$. Write
\begin{equation*}
a_i'= \left\{ \begin{array}{lll}
a_{k_1} &\text{if }i=k_1+1\\
a_{k_1+1} &\text{if }i=k_1 \\
a_i &\text{otherwise}\end{array}, \right.
\end{equation*}
  \begin{equation*}
S_1:=\bigcup_{\sigma\in \Per_{k_1}}\sum_{i=1}^{k_1}a'_{\sigma(i)}X_i\qquad\text{and}\qquad S_2:=\bigcup_{\sigma\in \Per_{n-k_1}}\sum_{i=1}^{n-k_1}a'_{\sigma(i)+k_1}X_{i+k_1} 
\end{equation*} 
and we conclude  as in (\ref{E2}).  Now assume that $a_{k_1}=-a_{k_2}$ and $k_2-k_1>1$. In this case we set
\begin{equation*}
a_i'= \left\{ \begin{array}{lll}
a_{k_1} &\text{if }i=k_2+1\\
a_{k_2+1} &\text{if }i=k_1 \\
a_i &\text{otherwise}\end{array}, \right.
\end{equation*}
  \begin{equation*}
S_1:=\bigcup_{\sigma\in \Per_{k_1}}\sum_{i=1}^{k_1}a'_{\sigma(i)}X_i\qquad\text{and}\qquad S_2:=\bigcup_{\sigma\in \Per_{n-k_1}}\sum_{i=1}^{n-k_1}a'_{\sigma(i)+k_1}X_{i+k_1},
\end{equation*} 
 and we proceed as in (\ref{E2}).  If  $a_{k_1}=-a_{k_2}$ and $k_2-k_1=1$, then $k_3-k_2=1$  since $k_3-k_2\leq k_2-k_1$. Insomuch as $n\geq 4$,  we get that $k_1\geq 2$; moreover, we may assume that $k_1>2$ by Lemma \ref{R21}. Defining
\begin{equation*}
a_i'= \left\{ \begin{array}{lll}
a_{k_1} &\text{if }i=k_1+1\\
a_{k_1+1} &\text{if }i=k_1 \\
a_i &\text{otherwise}\end{array}, \right.
\end{equation*}
\begin{equation*}
S_1:=\bigcup_{\sigma\in \Per_{k_1}}\sum_{i=1}^{k_1}a'_{\sigma(i)}X_i\qquad\text{and}\qquad S_2:=\bigcup_{\sigma\in \Per_{n-k_1}}\sum_{i=1}^{n-k_1}a'_{\sigma(i)+k_1}X_{i+k_1},
\end{equation*} 
 we obtain the result  concluding as in (\ref{E2}).
 
\item[]Suppose that $m=2$. By Lemma  \ref{R18} and Lemma \ref{R19}, it suffices to solve the case  $k_2-k_1>1$. If $a_{k_1}\neq -a_{k_2}$, then define
\begin{equation*}
a_i'= \left\{ \begin{array}{lll}
a_{k_1} &\text{if }i=k_1+1\\
a_{k_1+1} &\text{if }i=k_1 \\
a_i &\text{otherwise}\end{array}, \right.
\end{equation*}
\begin{equation*}
S_1:=\bigcup_{\sigma\in \Per_{k_1}}\sum_{i=1}^{k_1}a'_{\sigma(i)}X_i\qquad\text{and}\qquad S_2:=\bigcup_{\sigma\in \Per_{n-k_1}}\sum_{i=1}^{n-k_1}a'_{\sigma(i)+k_1}X_{i+k_1},
\end{equation*} 
and we finish as in (\ref{E2}).   If $a_{k_1}=-a_{k_2}$ and  $k_2-k_1=2$, then $k_1\geq 2$. By Lemma \ref{R20}, it is enough to demonstrate the claim when $k_1\geq 4$. We may conclude as in (\ref{E2}) defining 
\begin{equation*}
a_i'= \left\{ \begin{array}{lll|}
a_{k_1+1} &\text{if }i=k_1-1\\
a_{k_1-1} &\text{if }i=k_1 \\
a_{k_1} &\text{if }i=k_1+1 \\
a_i &\text{otherwise}\end{array} \right.
\end{equation*}
\begin{equation*}
S_1:=\bigcup_{\sigma\in \Per_{k_1-1}}\sum_{i=1}^{k_1-1}a'_{\sigma(i)}X_i\quad\text{and}\quad S_2:=\bigcup_{\sigma\in \Per_{n-k_1+1}}\sum_{i=1}^{n-k_1+1}a'_{\sigma(i)+k_1-1}X_{i+k_1-1}.
\end{equation*} 
Finally, if $a_{k_1}=-a_{k_2}$ and  $k_2-k_1>2$, then define
\begin{equation*}
a_i'= \left\{ \begin{array}{lll}
a_{k_1} &\text{if }i=k_1+1\\
a_{k_1+1} &\text{if }i=k_1 \\
a_i &\text{otherwise}\end{array}, \right.
\end{equation*}
\begin{equation*}
S_1:=\bigcup_{\sigma\in \Per_{k_1}}\sum_{i=1}^{k_1}a'_{\sigma(i)}X_i\qquad\text{and}\qquad S_2:=\bigcup_{\sigma\in \Per_{n-k_1}}\sum_{i=1}^{n-k_1}a'_{\sigma(i)+k_1}X_{i+k_1},
\end{equation*} 
and the result follows as in (\ref{E2}).
\end{enumerate}
\end{proof}
\section{Proof of Theorem 1.2}
In this section we show Theorem 1.2. As in the proof of Theorem \ref{R1}, assume without loss of generality that there are $1\leq k_1<k_2<\ldots<k_m=n$ such that $a_1=a_2=\ldots=a_{k_1}$ and $a_{k_i+1}=a_{k_i+2}=\ldots=a_{k_{i+1}}$ for all $i\in\{1,\ldots,m-1\}$ with $a_{k_i}=a_{k_j}$ only if $i=j$. The main idea that we will use in the proof is that if there are not rainbow solutions of (\ref{E48}), then $b\not\in\bigcup_{\sigma\in \Per_n}\sum_{i=1}^na_{\sigma(i)}C_i=:S$. Thus to show that (\ref{E48}) has a rainbow solution, it is enough to prove the following inequality
\begin{equation}
\label{E47}
|S|>|\Finp\setminus\{b\}|=p-1.
\end{equation}
\begin{proof}\emph{(Theorem \ref{R2})} First assume that $n=2$. If $C_1$ and $C_2$ are as in (\ref{E45}), then
\begin{equation*}
a_1C_1=-a_2C_1+b\qquad\text{and}\qquad a_1C_2=-a_2C_2+b,
\end{equation*}
and the result is clear. If the coloring is rainbow free with respect to (\ref{E48}), then 
\begin{equation*}
a_1C_1\cap (-a_2C_2+b)=\emptyset\qquad\text{and}\qquad a_1C_2\cap (-a_2C_1+b)=\emptyset
\end{equation*} 
which is equivalent to say that
\begin{equation*}
a_1C_1=-a_2C_1+b\qquad\text{and}\qquad a_1C_2=-a_2C_2+b;
\end{equation*}
then $C_1$ and $C_2$ have to be as in (\ref{E45}).

Due to the main result of [6] and the previous paragraph, we may assume that $n>3$. We shall show (\ref{E47}) studying the possibilities of $m$:
\begin{enumerate}
\item[]Suppose that $m\geq 4$. Then we can find a partition  $A_1\cup A_2$ of $\{a_{k_1},\ldots,a_{k_m}\}$ with the properties that $\min\{|A_1|,|A_2|\}>1$ and there are $b_i,c_i\in A_i$ such that $b_i\not\in\{\pm c_i\}$ for $i\in\{1,2\}$. Assume without loss of generality that $A_1=\{a_{k_1},a_{k_2}\}$ and $A_2=\{a_{k_3},\ldots,a_{k_m}\}$. Call 
\begin{equation*}
S_1:=\bigcup_{\sigma\in \Per_{k_2}}\sum_{i=1}^{k_2}a_{\sigma(i)}C_i\qquad\text{and}\qquad S_2:=\bigcup_{\sigma\in \Per_{n-k_2}}\sum_{i=1}^{n-k_2}a_{\sigma(i)+k_2}C_{i+k_2}.
\end{equation*}
If (\ref{E47}) is not true, then
\begin{equation*}
\max\Bigg\{\sum_{i=1}^{k_2}|C_i|, \sum_{i=k_2+1}^n|C_i| \Bigg\}\leq p-5
\end{equation*}
and 
\begin{align}
\label{E4}
p-1&\geq |S|\nonumber\\
&\geq |S_1+S_2|\nonumber\\
&\geq |S_1|+|S_2|-1&\text{by Theorem \ref{R3}}\nonumber\\
&\geq \Bigg(\sum_{i=1}^{k_2}|C_i|\Bigg)+1+\Bigg(\sum_{i=k_2+1}^n|C_i|\Bigg)+1-1&\text{ by Theorem \ref{R1} }\nonumber\\
&=p+1
\end{align}
which is false.

Until the end of this proof, we suppose without loss of generality that $k_1\geq k_2-k_1\geq\ldots\geq k_m-k_{m-1}$

\item[]Suppose that  $m=3$. If $a_{k_1}\neq-a_{k_2}$, write 
\begin{equation*}
a_i'= \left\{ \begin{array}{lll}
a_{k_1} &\text{if }i=k_1+1\\
a_{k_1+1} &\text{if }i=k_1 \\
a_i &\text{otherwise}\end{array}, \right.
\end{equation*}
  \begin{equation*}
S_1:=\bigcup_{\sigma\in \Per_{k_1}}\sum_{i=1}^{k_1}a'_{\sigma(i)}C_i\qquad\text{and}\qquad S_2:=\bigcup_{\sigma\in \Per_{n-k_1}}\sum_{i=1}^{n-k_1}a'_{\sigma(i)+k_1}C_{i+k_1};
\end{equation*} 
and  conclude as in (\ref{E4}).  If $a_{k_1}=-a_{k_2}$ and $k_2-k_1>1$, we set 
\begin{equation*}
a_i'= \left\{ \begin{array}{lll}
a_{k_1} &\text{if }i=k_2+1\\
a_{k_2+1} &\text{if }i=k_1 \\
a_i &\text{otherwise}\end{array}, \right.
\end{equation*}
  \begin{equation*}
S_1:=\bigcup_{\sigma\in \Per_{k_1}}\sum_{i=1}^{k_1}a'_{\sigma(i)}C_i\qquad\text{and}\qquad S_2:=\bigcup_{\sigma\in \Per_{n-k_1}}\sum_{i=1}^{n-k_1}a'_{\sigma(i)+k_1}C_{i+k_1}
\end{equation*} 
and  conclude as in (\ref{E4}).  If $a_{k_1}=-a_{k_2}$ and $k_2-k_1=1$, then $k_3-k_2=1$ and thereby $k_1\geq 2$. If $k_1>2$, then we conclude as in (\ref{E4}) taking
\begin{equation*}
a_i'= \left\{ \begin{array}{lll}
a_{k_1} &\text{if }i=k_1+1\\
a_{k_1+1} &\text{if }i=k_1 \\
a_i &\text{otherwise}\end{array}, \right.
\end{equation*}
  \begin{equation*}
S_1:=\bigcup_{\sigma\in \Per_{k_1}}\sum_{i=1}^{k_1}a'_{\sigma(i)}C_i\qquad\text{and}\qquad S_2:=\bigcup_{\sigma\in \Per_{n-k_1}}\sum_{i=1}^{n-k_1}a'_{\sigma(i)+k_1}C_{i+k_1}.
\end{equation*}
Now we study the case where $a_{k_1}=-a_{k_2}$, $k_2-k_1=k_3-k_2=1$ and $k_1=2$. Set 
$R_{i,j}:=a_1C_i+a_4C_j\cup a_1C_j+a_4C_i$  and $T_{i,j}:=a_2C_i+a_3C_j\cup a_2C_j+a_3C_i$ for each $i,j\in\{1,\ldots, 4\}$ with $i<j$. If (\ref{E47}) is not true, then 
\begin{align}
p-1&\geq |S|\nonumber\\
&\geq |R_{1,2}+T_{3,4}|\nonumber\\
&\geq |R_{1,2}|+\Bigg(\sum_{i=3}^4|C_i|\Bigg)-2&\text{by Theorem \ref{R3}}\nonumber\\
&\geq \Bigg(\sum_{i=1}^{2}|C_i|\Bigg)+1+\Bigg(\sum_{i=3}^4|C_i|\Bigg)-2&\text{ by Theorem \ref{R1} }\nonumber\\
&=p-1\nonumber
\end{align}
so $|T_{3,4}|=|C_3|+|C_4|-1$. As a consequence of Theorem \ref{R1}, there are $r,x,y,c\in\Finp$ such that $C_3=r[x,y]$ and $C_4=r[y+c,x-c]$. In the same way, it can be proven there are $r',x',y',c'\in\Finp$ such that $C_3=r'[x',y']$ and $ C_2=r'[y'+c',x'-c']$. By Proposition \ref{R7} we get that $r'\in\{\pm r\}$; we assume without loss of generality that $r'=r$ and thereby $x'=x$ and $y'=y$. Consequently $C_1, C_2, C_3$ and $C_4$ are not pairwise disjoint.

\item[] Suppose that $m=2$. In the case where $a_{k_1}\neq -a_{k_2}$ and  $k_2-k_1>1$ or in the case where $a_{k_1}=-a_{k_2}$ and  $k_2-k_1>2$, we conclude as in (\ref{E4}) with
\begin{equation*}
a_i'= \left\{ \begin{array}{lll}
a_{k_1} &\text{if }i=k_1+1\\
a_{k_1+1} &\text{if }i=k_1 \\
a_i &\text{otherwise}\end{array}, \right.
\end{equation*}
  \begin{equation*}
S_1:=\bigcup_{\sigma\in \Per_{k_1}}\sum_{i=1}^{k_1}a'_{\sigma(i)}C_i\qquad\text{and}\qquad S_2:=\bigcup_{\sigma\in \Per_{n-k_1}}\sum_{i=1}^{n-k_1}a'_{\sigma(i)+k_1}C_{i+k_1}.
\end{equation*} 
 If $a_{k_1}=-a_{k_2}$ and $(k_1,k_2-k_1)=(2,2)$,  write $R_{i,j}:=a_1C_i+a_3C_j\cup a_1C_j+a_3C_i$ and $T_{i,j}=a_2C_i+a_4C_j\cup a_2C_j+a_4C_i$ for $i,j\in\{1,\ldots,4\}$ with $i<j$. Then
\begin{align}
p-1&\geq |S|\nonumber\\
&\geq |R_{1,2}+T_{3,4}|\nonumber\\
&\geq |R_{1,2}|+|T_{3,4}|-1&\text{by Theorem \ref{R3}.}\nonumber
\end{align}
Hence $|R_{1,2}|\leq |C_1|+|C_2|-1$ or $|T_{3,4}|\leq |C_3|+|C_4|-1$; assume without loss of generality that $|R_{1,2}|\leq |C_1|+|C_2|-1$. Thus there are $r,x,y,c$ such that $C_1=r[x,y]$ and $C_2=r[y+c,x-c]$. Analogously  $|R_{1,3}|\leq |C_1|+|C_3|-1$ or $|T_{2,4}|\leq |C_2|+|C_4|-1$, and we assume without loss of generality that  $|R_{1,3}|\leq |C_1|+|C_3|-1$ so that there are $r',x',y',c'$ such that
$C_1=r'[x',y']$ and $C_3=r'[y'+c',x'-c']$. By Proposition \ref{R7} we conclude that $r'\in\{\pm r\}$; we suppose without loss of generality $r'=r$ so $x'=x$, $y'=y$, and $C_1,C_2,C_3$ are not pairwise disjoint. Now we study the case where $a_{k_1}=-a_{k_2}$, $k_1>2$ and $k_2-k_1=2$. Write
\begin{equation*}
a_i'= \left\{ \begin{array}{lll}
a_{k_1} &\text{if }i=k_1+1\\
a_{k_1+1} &\text{if }i=k_1 \\
a_i &\text{otherwise}\end{array}, \right.
\end{equation*}
\begin{equation*}
S_1:=\bigcup_{\sigma\in \Per_{k_1}}\sum_{i=1}^{k_1}a'_{\sigma(i)}C_i\qquad\text{and}\qquad S_2:=\bigcup_{\sigma\in \Per_{n-k_1}}\sum_{i=1}^{n-k_1}a'_{\sigma(i)+k_1}C_{i+k_1}.
\end{equation*}
If (\ref{E47}) is not true, then 
\begin{align*}
p-1&\geq |S|\\
&\geq |S_1+S_2|\\
&\geq |S_1|+|S_2|-1&\text{by Theorem \ref{R3}}\\
&\geq |S_1|+|C_{k_1+1}|+|C_{k_1+2}|-2&\text{by Theorem \ref{R3}}\\
&\geq \Bigg(\sum_{i=1}^{k_2}|C_i|\Bigg)+1-2&\text{ by Theorem \ref{R1} }\\
&=p-1
\end{align*}
and all these relations are equalities; in particular,  $|S_2|=|C_{k_1+1}|+|C_{k_1+2}|-1$. Then Theorem \ref{R1} impliesthe existence of $r,x,y,c\in\Finp$ such that
$C_{k_1+2}=r[x,y]$ and $C_{k_1+1}=r[y+c,x-c]$. Define 
\begin{equation*}
S_1':=\bigcup_{\sigma\in \Per_{k_1}}\Bigg(a'_{\sigma(k_1)}C_{k_1+1}+\sum_{i=1}^{k_1-1}a'_{\sigma(i)}C_i\Bigg)
\end{equation*}
and 
\begin{equation*}
 S_2':= a'_{k_1+1}C_{k_1}+a'_{k_1+2}C_{k_1+2}\cup a'_{k_1+1}C_{k_1+2}+a'_{k_1+2}C_{k_1};
\end{equation*}
 If we proceed as above  (with $(S_1',S_2')$ instead of $(S_1,S_2)$), we may obtain the existence of $r',x',y',c'\in\Finp$ such that $C_{k_1+2}=r'[x',y']$ and $ C_{k_1}=r'[y'+c',x'-c']$. From Proposition \ref{R7} we deduce that  $r'\in\{\pm r\}$; suppose without loss of generality that $r'=r$. Then $x'=x$, $y'=y$,  and $C_{k_1}, C_{k_1+1}, C_{k_1+1}$ are not pairwise disjoint.  Finally we analyze the case $k_1-k_2=1$. Set 
\begin{equation*}
a_i'= \left\{ \begin{array}{lll}
a_{k_1} &\text{if }i=k_1+1\\
a_{k_1+1} &\text{if }i=k_1 \\
a_i &\text{otherwise}\end{array} \right.
\end{equation*} 
and 
\begin{equation*}
S':=\bigcup_{\sigma\in \Per_{k_1}}\sum_{i=1}^{k_1}a'_{\sigma(i)}C_i;
\end{equation*}
If (\ref{E47}) is not true, we get the contradiction 
\begin{align*}
p-1&\geq |S|\\
&\geq |S'+a_{k_1+1}'C_{k_1+1}|\\
&\geq |S'|+|C_{k_1+1}|-1&\text{by Theorem \ref{R3}}\\
&\geq \Bigg(\sum_{i=1}^{k_1}|C_i|\Bigg)+1+|C_{k_1+1}|-1&\text{ by Lemma \ref{R18}, Lemma \ref{R19} }\\
&&\text{and Remark \ref{R24} }\\
&=p.
\end{align*}
\end{enumerate}
\end{proof}
\textbf{Acknowledgments: }
I acknowledge Amanda Montejano who introduced me to the topic and proposed this problem. 

\section*{References}

\end{document}